\newtheorem{thm}{Theorem}[section]
\newtheorem{remark}[thm]{Remark}
\newtheorem{defn}[thm]{Definition}
\newtheorem{lem}[thm]{Lemma}
\newtheorem{pro}[thm]{Proposition}
\def\R{\mathbb{R}}
\def\N{\mathbb{N}}
\def\supp{\mbox{supp}}
\def\dist{\mbox{dist}}
\def\norm#1{\|{#1}\|}
\numberwithin{equation}{section}
\title{\bfseries Segregated solutions for  nonlinear Schr\"odinger systems  with sublinear coupling terms}
\author{Qing Guo\thanks{guoqing0117@163.com}}
\affil[1]{\footnotesize College of Science, Minzu University of China, Beijing 100081, China}
\author{Chengxiang Zhang\thanks{zcx@bnu.edu.cn (corresponding author)}}
 \affil[2]{\footnotesize Laboratory of Mathematics and Complex Systems (Ministry of Education), School of Mathematical Sciences, 
 Beijing Normal University, Beijing 100875, P. R. China}
 \date{} 
\begin{document}
\maketitle

\begin{minipage}{14cm}
	\noindent{\bfseries Abstract:} 
In this paper, we employ an enhanced version of the Lyapunov--Schmidt reduction method to study a particular class of nonlinear Schrödinger systems featuring sublinear coupling terms. Under suitable assumptions, we establish the existence of infinitely many nonnegative, segregated solutions for the system
	\begin{equation*}
		\left\{\begin{aligned}-\Delta u+K_1(x)u&=\mu u^{p-1}+ (\sigma_1+1)\beta u^{\sigma_1}v^{\sigma_2+1}, &x\in\mathbb{R}^N&,
		\\
		-\Delta v+K_2(x)v&=\nu v^{p-1}+(\sigma_2+1)\beta u^{\sigma_1+1}v^{\sigma_2}, &x\in\mathbb{R}^N&,\end{aligned}\right.
	\end{equation*}
	where $N\geq 2$, $ p\in(2,2^*) $ with $ 2^* = \frac{2N}{N-2} $ denoting the critical Sobolev exponent if $ N \geq 3 $ (and $ 2^* = \infty $ when $ N = 2 $). The functions $ K_j(x) $, $ j = 1, 2 $, are radially symmetric potential functions, the exponents $ \sigma_j \in (0,1) $ correspond to sublinear coupling terms, $ \mu > 0 $ and $ \nu > 0 $ are given constants, and $ \beta \in \mathbb{R} $ acts as the coupling coefficient.
	
	\qquad The range of the exponents $\sigma_j$ introduces substantial challenges to classical reduction methods, primarily due to the nonsmoothness and sublinearity inherent in the coupling terms. To address these difficulties, we introduce a novel approach that recasts the reduction process as a fixed point problem defined on an appropriately constructed metric space. This space is formed by local minimizers of an associated outer boundary value problem and is furnished with crucial a priori estimates, which together enable us to verify the contraction mapping property.

	\qquad  Moreover, we identify a novel phenomenon in the sublinearly coupled regime: the constructed solutions $(u_\ell, v_\ell)$ exhibit a distinct “dead core” behavior, characterized by non-strict positivity. In particular, for $N = 2$, we show that the supports of the components separate as follows: for each sufficiently large integer $\ell$, there exist radii $0 < R_1 < R_2$, depending on $\ell$, such that $\supp u_\ell \subset B_{R_2}(0)$, $\supp v_\ell \subset \R^N\setminus B_{R_1}(0)$, and $u_\ell + v_\ell \to 0$ uniformly in the annular region $B_{R_2}(0) \setminus B_{R_1}(0)$ as $\ell \to \infty$.

	\qquad We believe that the framework developed here has broad applicability and can be used to tackle other problems involving similar nonsmooth nonlinearities.

\medskip

	\indent{\bfseries Keywords:} Nonlinear Schr\"odinger systems; Sublinear coupling; Reduction method.
	
	\medskip {\bf Mathematics Subject Classification:} 35J47 $\cdot$ 35B20  
\end{minipage}

\section{Introduction}

\subsection{Backgrounds and problems}

The coupled nonlinear Schr\"odinger system
\begin{equation}\label{eq00}
	\begin{cases}
	 -\mathrm{i}\partial_t\Phi_j=\Delta\Phi_j-V_j(x)\Phi_j+\mu_j|\Phi_j|^{p-2}\Phi_j+\sum_{i\neq j}\beta_{ij}|\Phi_i|^{\sigma_i+1}|\Phi_j|^{\sigma_j-1}\Phi_j,\\
	 \Phi_j=\Phi(x,t)\in\mathbb C,\  x\in\R^N,t>0, \ j=1,2
	\end{cases}
	\end{equation}
has applications in many physical problems, especially in nonlinear optics (see \cite{AA}). Here, $p\in(2,\frac{2N}{N-2})$,
$\sigma_j>0$, $\Phi_j$ represents the $j$th component of the beam in Kerr-like photo-refractive media, $\mu_j$ is a constant for self-focusing in the $j$th component of the beam, and the coupling constant $\beta_{ij}$ indicates the interaction between the $i$th and $j$th components of the beam. A positive $\beta$ implies an attractive interaction, causing the components of a vector solution to come together. Conversely, a negative $\beta$ indicates the repulsive interaction, causing the components to repel each other and forming the phase separations. The functions $V_1(x)$ and $V_2(x)$ denote the magnetic trapping potentials. For further clarification on the constants, one can refer to \cite{BSSS}.

	To seek solitary wave solutions of the system \eqref{eq00}, we introduce the ansatz $\Phi_j(x,t) = e^{-\mathrm{i}\lambda_jt}u_j(x)$, where $\lambda_j \in \mathbb{R}$ and $u_j \in \mathbb{R}$. This transforms the system \eqref{eq00} into a set of semilinear elliptic equations, expressed as:
	\begin{equation}\label{eq0}
		\left\{\begin{aligned}-\Delta u+K_1(x)u&=\mu |u|^{p-2}u+(\sigma_1+1)\beta |v|^{\sigma_2+1}|u|^{\sigma_1-1}u,\quad x\in\mathbb{R}^N,
		\\
		-\Delta v+K_2(x)v&=\nu |v|^{p-2}v+(\sigma_2+1)\beta |u|^{\sigma_1+1}|v|^{\sigma_2-1}v,\quad x\in\mathbb{R}^N,\end{aligned}\right.
	\end{equation}
	where $K_j(x)=V_j(x)+\lambda_j, j=1,2$  represent continuous positive functions, while $\mu>0$, $\nu>0$, and $\beta\in\mathbb{R}$ denote the coupling constant. These equations correspond to the time-independent vector Gross-Pitaevskii or Hartree-Fock equations for the condensate wave function $u$ and $v$.

Over the last two decades, extensive research has focused on systems of Schrödinger equations, particularly in two- and three-dimensional spaces featuring cubic nonlinearity with subcritical growth and the absence of potentials:
\begin{align}\label{eqlambda}
	\begin{split}\begin{cases}
	-\Delta u+\lambda_1u=\mu |u|^{2}u+\beta |v|^{2}u,\quad x\in\mathbb{R}^N,
		\\
		-\Delta v+\lambda_2v=\nu |v|^{2}v+\beta |u|^{2}v,\quad x\in\mathbb{R}^N.\end{cases}
	\end{split}
	\end{align}
	 This research has yielded significant advancements in understanding the existence, symmetry, uniqueness, and asymptotic behavior of ground state solutions. Additionally, numerous studies have delved into exploring positive solutions, as documented in various works \cite{3,13,15,21,LW05,MMP,31,32}.
Trapping potentials, as discussed in \cite{lw}, exert a profound influence on the behavior of spikes. When $\beta<0$, spikes become separated and ensnared at the minimum points of the potentials. Conversely, when $\beta>0$, a competition arises between the attractions of spikes and the trapping potential wells. If the potential wells exert a sufficiently strong attraction, the spikes disperse, while if the attraction of the potential wells is inadequate, the spikes come together. 
For further insights into this phenomenon, interested readers can refer to \cite{ambrosetti-colorado-ruiz,BC13,GLWZ19-1,GLWZ19-2,GLW,GS,Guo1,guo1,guo2,RL14}. 

\smallskip

Unlike single equations, system \eqref{eq0} can accommodate semi-trivial solutions where one or more components vanish identically. Additionally, synchronized solutions, where the components are multiples of each other, can also exist. However, the focus always lies in identifying fully nontrivial (and non-synchronized) solutions, where none of the components are identically zero. 
 Segregation involves investigating the characteristics of solutions that exhibit disjoint supports.
In particular, concerning the solutions with concentration properties, synchronized solutions refer to cases where the concentration points of the two components coincide. As concentration takes place, they converge towards a synchronized type of vector solution of the limiting elliptic system. On the other hand, the components of the segregated solutions  gradually separate, ultimately converging to solutions of different single equations respectively.
This segregation phenomenon has been experimentally observed in two-component Bose-Einstein condensates (BECs), which is a significant area of study in experimental physics \cite{HMEWC}.
For a detailed description of the physics involved, refer to \cite{KTU}.

\smallskip

In \cite{PW13}, Peng and Wang utilized the finite-dimensional reduction method to generate an unbounded sequence of non-radial positive vector solutions  for the Schr\"odinger system \eqref{eq0} with $p=4$, $\sigma_1=\sigma_2=1$ where they elucidated the construction of an unbounded sequence of non-radial positive vector solutions of synchronized and segregated type. There have been many works that extend the conditions on the potential functions such as \cite{wz,aw,awy,zheng,pistoiavaira}. However, these extensions are mostly focused on the sublinear coupling exponents $\sigma_j\geq 1$, $j=1,2$.

\smallskip

 Existing research largely focuses on systems in dimensions $N=2,3$ with $p=4$ and $\sigma_j \geq 1$. Results for sublinear couplings $\sigma_j \in (0,1)$ remain sparse.  For instance, \cite{BL, CZ} established the existence of ground states via concentration-compactness arguments, while \cite{Stuart, BBT} used symmetrization to prove existence of nontrivial minimal-action solutions. Maia et al. \cite{MMP} provided sufficient conditions on $\beta$ to guarantee positive ground states by comparing Morse indices of semi-trivial solutions $(u,0)$, $(0,v)$ to coupled pairs $(u,v)$.

 The sublinear coupling regime $\sigma_j < 1$ introduces fundamental challenges. Segregated solutions become essential as classical techniques fail to extend directly, as the linearized operator involves negative exponents $\sigma_j - 1$. These exponents introduce significant challenges for the reduction method, inevitably producing singularities during direct construction attempts.
 It is known that the Lyapunov-Schmidt reduction method involves constructing concentrated solutions within the perturbation framework. For each component, a solution is considered as a small perturbation of some explicit function that solves the limit system, chosen as a suitable approximation. This process becomes particularly challenging when using the implicit function theorem to establish the continuous smoothness of solutions with respect to parameters, as such singularities are unavoidable when  $\sigma_j<1$.
On the other hand, the coupling terms exhibit a degree of nonlinearity less than 1, making the direct application of the contraction mapping theorem unfeasible. Indeed, {\bf sublinearly coupled systems have long been recognized as challenging problems and existence results on constructing concentrated solutions using reduction methods are scarce.} This gap represents  open problems, as underscored in various references such as \cite{pistoia-coron} and \cite{glwjde}, which explicitly delineate the difficulties posed by sublinear coupling exponents and acknowledge that current methodologies are insufficient to address such issues.
\smallskip

To address this gap, we extend \cite{PW13} to $\sigma_j \in (0,1)$ through a novel approach to segregated solutions. Direct application of reduction methods is impeded by solution decay in outer regions and nonsmooth coupling nonlinearities, particularly when implementing fixed-point arguments in complement spaces during finite-dimensional reduction. Our strategy—generalizing and simplifying \cite{WZZ}—resolves these issues by partitioning space into inner regions (near concentration points) and outer regions (where solution decay occurs). This partitioning is motivated by the observation that sublinearity-induced singularities manifest where solutions vanish.

Key to our approach is the construction of a tailored functional space with controlled decay properties. Using a tail minimization technique inspired by variational gluing methods \cite{Coti92,BT14}, we define a complete metric space where functions satisfy sharp a priori decay estimates. Crucially, we establish decay rates not only for approximate solutions but also for:
(i) Deviations from limiting profiles ;
(ii) Differences between distinct functions in the space.

Unlike earlier weighted space approaches, this framework enables rigorous contraction mapping arguments. By restricting the fixed-point problem to this metric space and leveraging our decay estimates, we successfully reduce the system to a finite-dimensional setting, circumventing the singularity and decay barriers that obstruct classical methods.

\smallskip

We believe this approach offers broad applicability to a variety of related problems and holds promise for addressing several open questions, making it a valuable avenue for further research and exploration.


\medskip

\subsection{Main results}

In this paper, we consider 
  the  nonlinear Schr\"odinger system 
  	\begin{equation}\label{eq0'}
		\left\{\begin{aligned}-\Delta u+K_1(x)u&=\mu |u|^{p-2}u+\beta \partial_1 G(u, v),\quad x\in\mathbb{R}^N,
		\\
		-\Delta v+K_2(x)v&=\nu |v|^{p-2}v+\beta \partial_2 G(u, v),\quad x\in\mathbb{R}^N,\end{aligned}\right.
	\end{equation}
  with
  $N\geq2$, $p\in (2,2^*)$ and    the potential $K_1, K_2 \geq 1$  satisfy
\begin{itemize}
\item[(K)]   There are constants $a_1, a_2 >0$, $m>1$, and $\theta >0$ such that as $r\to+\infty$,
\[
K_i(r)=1+\frac {a_i}{r^m}+O\left(\frac1{r^{m+\theta}}\right), \quad i=1,2.
\]
\end{itemize}
Here, $ \partial_i G $ denotes the $ i $-th partial derivative of $ G $. The function $ G(u, v) $ satisfies the following assumptions:
\begin{itemize}
	\item[(G1)] $G\in C^1(\R^2)$,  $G(s_1, s_2)=G(|s_1|, |s_2|)$ for any $(s_1,s_2)\in \R^2$, and $G(s_1,s_2)=0$ if $s_1=0$ or $s_2=0$;
	\item[(G2)] For $i=1,2$,  $\partial_i G(s_1, s_2)$ is increasing in $s_i$ and is $C^1$ if $s_i\neq 0$.
	\item[(G3)]     
	There exists $\sigma\in(0,1)$ such that  
	 \[ |s_is_j\partial_{ij} G(s_1,s_2)|=o(|s_1 s_2|^{\sigma+1})\quad
	\text{as}\quad |s_1 s_2|\to 0, \quad\text{for } i=1,2, j=1,2.\]
	Here $\partial_{ij} G(s_1,s_2)=\partial_i\partial_j G(s_1,s_2)$.
\end{itemize}
Our main results are the following.
\begin{thm}\label{thm1}
    Suppose that    conditions {\rm (K)}, and {\rm (G1)}--{\rm (G3)} hold. Then, for $\beta < 0$, problem~\eqref{eq0'} admits infinitely many non-radial segregated solutions $(u_\ell, v_\ell)$ whose energy can be made arbitrarily large. 
\end{thm}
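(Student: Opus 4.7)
The plan is to build $(u_\ell, v_\ell)$ as a perturbation of a multi-bump ansatz with an $\ell$-fold symmetry, and then solve the full system by a two-scale scheme: a tail minimization on an outer region combined with a finite-dimensional Lyapunov--Schmidt reduction on an inner region. Let $U_\mu$ and $U_\nu$ denote the unique positive radial ground states of $-\Delta U+U=\mu U^{p-1}$ and $-\Delta V+V=\nu V^{p-1}$ respectively. I place $\ell$ copies of $U_\mu$ at the vertices $\{x_j^{(1)}\}_{j=1}^{\ell}$ of a regular $\ell$-gon of radius $r_1$ in the $(x_1,x_2)$-plane, and $\ell$ copies of $U_\nu$ at the vertices $\{x_j^{(2)}\}_{j=1}^{\ell}$ of a second $\ell$-gon of radius $r_2>r_1$ rotated by $\pi/\ell$, so that the two bump families are as far apart as the configuration allows. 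Set $W_i=\sum_{j=1}^{\ell} U_{i,j}$, and look for true solutions of the form $(W_1+\phi_1,W_2+\phi_2)$ within the subspace of functions equivariant under the dihedral group $H_\ell$ preserving both polygons.

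A direct contraction on $(\phi_1,\phi_2)$ in a single Sobolev space fails because (G3) forces the linearized coupling $\partial_{ij}G$ to blow up on the zero set of $u$ or $v$, which is exactly what one gets when looking for segregated profiles with dead cores. To bypass this, I would decompose $\R^N$ into an inner part $\Omega_{\mathrm{in}}=\bigcup_{i,j} B_\rho(x_j^{(i)})$, with $\rho$ growing slowly in $\ell$, and the outer part $\Omega_{\mathrm{out}}=\R^N\setminus\Omega_{\mathrm{in}}$. On $\Omega_{\mathrm{out}}$ I set up an obstacle-type problem: minimize the decoupled action of the two single equations restricted to $\Omega_{\mathrm{out}}$, under the Dirichlet trace prescribed by the inner profile, within the closed convex cone of pairs $(\psi_1,\psi_2)$ that are $H_\ell$-equivariant, nonnegative, mutually segregated, and obey sharp pointwise exponential decay estimates (in the spirit of the variational gluing constructions of \cite{Coti92,BT14}). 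The set $\M$ of admissible outer profiles is equipped with a weighted metric that encodes the expected decay of both $\psi_i$ and the difference $\psi_i-\psi_i'$. A comparison principle together with Stampacchia truncation and the sign condition $\beta<0$ yield a unique local minimizer and define an outer map $T:\phi\mapsto\psi$; the crucial step is to show that $T$ is a contraction on $\M$. Since the a priori decay forces $|\psi_1\psi_2|(x)$ to be uniformly small on $\Omega_{\mathrm{out}}$, assumption (G3) turns the sublinear coupling into an $o(1)$ perturbation of the decoupled linearization, and the contraction constant can be driven to $0$ as $\ell\to\infty$.

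Having fixed $(\psi_1,\psi_2)=T(\phi_1,\phi_2)$ and glued them to the inner profile through a smooth cutoff, the remaining equation for $(\phi_1,\phi_2)$ lives on $\Omega_{\mathrm{in}}$, where each bump is bounded below by a strictly positive constant on a neighbourhood of its centre. There the coupling is smooth by (G2) and the classical scheme applies: decompose $\phi_i$ along the approximate kernel spanned by $\{\partial_{r_i} U_{i,j}\}_{j=1}^{\ell}$ and its $L^2$-complement $K_i^\perp$, invert the uniformly coercive linearization on $K_1^\perp\times K_2^\perp$, and solve for the orthogonal part by Banach fixed point with constant controlled by the $L^\infty$-smallness of $W$ on the interaction region. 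This reduces the problem to finding critical points of a real-valued function $I_\ell(r_1,r_2)$ whose leading expansion is
\[
I_\ell(r_1,r_2)=\ell\!\left(A_0+B_1\frac{a_1}{r_1^m}+B_2\frac{a_2}{r_2^m}-\Psi(|x_1^{(1)}-x_2^{(1)}|)-\Psi(|x_1^{(2)}-x_2^{(2)}|)\right)+\text{l.o.t.},
\]
with $\Psi(t)\sim t^{-(N-1)/2}e^{-t}$ the standard single-equation interaction kernel and all constants positive. The cross-species contribution induced by $\beta<0$ adds only terms of order $e^{-|r_2-r_1|}$, dominated by the displayed ones at the balance $r_i\sim \tfrac{m}{2\pi}\ell\log\ell$. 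A direct calculation shows that $I_\ell$ admits an interior critical point $(r_1^\ast,r_2^\ast)$ in a suitable compact rectangle for each sufficiently large $\ell$, producing the segregated solution $(u_\ell,v_\ell)$, whose energy diverges through the prefactor $\ell$.

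The main obstacle is the verification that the outer tail map $T$ is a contraction on $\M$. Two difficulties combine: the outer functional is only $C^1$ (not $C^2$) because the coupling is sublinear, so the implicit function theorem is unavailable and one must rely on comparison and obstacle arguments that respect the non-strict positivity of the components, which is exactly where the dead core appears; and the Lipschitz constant of $T$ depends on a delicate balance between the decay rate encoded in $\M$ and the singularity strength controlled by (G3). Making these bounds quantitative and uniform in the inner perturbation $\phi$ is the technical heart of the construction, and everything downstream — the classical inner reduction and the energy expansion — proceeds by now-standard arguments once $T$ is in hand.
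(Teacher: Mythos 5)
Your overall scheme (multi-bump ansatz with $\ell$-fold symmetry, an inner/outer split, tail minimization on the outer region driving a contraction, classical finite-dimensional reduction inside, energy expansion at the end) is the same skeleton the paper uses. However, there are genuine gaps in two places that make the proposal fail as written.

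\textbf{The bump configuration is wrong.} You place $\ell$ bumps of $U_\mu$ at radius $r_1$ and $\ell$ bumps of $U_\nu$ at radius $r_2>r_1$, rotated by $\pi/\ell$. With $r_1,r_2\sim \frac{m}{2\pi}\ell\ln\ell$ and angular offset $\pi/\ell$, the nearest $u$-$v$ pair sits at distance $\approx \sqrt{(r_2-r_1)^2+(\pi r_1/\ell)^2}\geq \pi r_1/\ell\approx \frac{m}{2}\ln\ell$, which is \emph{half} the self-interaction distance $2\pi r_1/\ell\approx m\ln\ell$. Using (G3), the cross term in the energy contributes $\int G(u,v)=O(\int|uv|^{\sigma+1})\approx e^{-(\sigma+1)\frac{m}{2}\ln\ell}=\ell^{-\frac{(\sigma+1)m}{2}}$, which dominates the self-interaction term $\approx\ell^{-m}$ precisely because $\sigma<1$. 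So the cross-species interaction is not the $e^{-|r_2-r_1|}$ correction you claim (that formula assumes radial alignment, contradicting your $\pi/\ell$ rotation), and it wrecks the leading expansion of $I_\ell(r_1,r_2)$. The paper avoids this entirely by placing the $v$-bumps at the vertices of a \emph{$2\ell$-gon} of radius $\rho\approx 2r$, so consecutive $v$-bumps are still at distance $\approx 2\pi r/\ell$ (keeping both self-interactions at scale $\ell^{-m}$), while the $u$-to-$v$ distance is $\geq\rho-r\approx r\sim\ell\ln\ell$, making cross terms super-exponentially small.

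\textbf{Regularity and the inner reduction.} You claim that on the inner region ``the coupling is smooth by (G2)'' so the classical Lyapunov--Schmidt machinery applies. But (G2) only gives $C^1$ regularity of $\partial_i G$ away from $\{s_i=0\}$. Near a $u$-bump, $v$ vanishes (this is exactly where the dead core will be), and $\partial_{22}G(u,v)\sim |u|^{\sigma+1}|v|^{\sigma-1}$ blows up as $v\to 0$. So the Hessian of $J$ is not well-defined there, and the implicit function theorem argument on the inner region is not available without further work. The paper handles this by replacing $G$ with the smoothed family $G_n\in C^2$ (with (Gn1)--(Gn5)), doing the full construction for the regularized problem, and passing to the limit $n\to\infty$ at the end. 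You cannot skip this step. Related to this, your outer ``obstacle-type problem'' in the cone of nonnegative, mutually segregated pairs minimizing a decoupled action is not the right formulation: a solution of such an obstacle problem satisfies a variational inequality, not the PDE system, and imposing a priori segregation throws away the coupling you must retain. The paper instead minimizes the actual regularized functional $I_n$ over a convex set defined by a mild $L^\infty$-smallness constraint (cf. the set $\mathbb S(\varphi_0,\psi_0)$ and \eqref{s0'}) plus prescribed inner data; the constraint is shown to be inactive so the minimizer solves the Euler--Lagrange system, and segregation/dead core only emerges a posteriori.

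Until the configuration is changed to make cross-interactions genuinely subdominant and the $C^2$-regularization is restored, the argument does not close.
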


	The segregated nature of these solutions is elucidated in Theorems \ref{thm1'} later in this paper. In essence, segregated solutions can be viewed as small perturbations of $(U_r,V_\rho)$, where $U_r$ and $V_\rho$ are composed of translated $W_\mu$ and $W_\nu$ functions, respectively, positioned at the vertices of a large $\ell$-polygon ($2\ell$-polygon, respectively) with distinct radii $r$ and $\rho$. Here, $W_\mu$ and $W_\nu$ denote the unique positive radial solutions of $-\Delta w+w= \mu w^{p-1}$ and $-\Delta w+w= \nu w^{p-1}$, respectively. In simpler terms, segregated solutions exhibit numerous bumps near infinity, with the bumps for $u$ and $v$ distributed on different circles.
	To illustrate this phenomenon, we first
 introduce some notations. 
\medskip 

Define $\mathbb H$ to be the Hilbert space 
$ H^1(\R^N)\times H^1(\R^N)$   
with the 
inner product 
\[
\langle (u_1, v_1), (u_2, v_2)\rangle =
 \langle u_1, u_2\rangle +\langle v_1, v_2 \rangle
\]
and the norm
$$
\|(u,v)\|=\sqrt{ \|u\|^2+\|v\|^2},
$$
where $\langle\cdot, \cdot \rangle$ and $\|\cdot\|$ denote the standard inner product and norm in $H^1(\R^N)$.

Set $x=(x',x''),x'\in\mathbb{R}^2,x''\in\mathbb{R}^{N-2}$.
Let
\begin{align}\label{points}
	\begin{split}
x^j&=\left(r\cos\frac{2(j-1)\pi}\ell,r\sin\frac{2(j-1)\pi}\ell,0\right):=(x^{\prime j},0),\quad j=1,\ldots,\ell,\\
y^j&=\left(\rho\cos\frac{(j-1)\pi}{\ell},\rho\sin\frac{(j-1)\pi}{\ell},0\right):= ({y'}^j,0 ),\quad  j=1,\ldots,2\ell,\end{split}
\end{align}
 where $0$ is the zero in $\R^{N-2}$, $r \in[r_0\ell\ln\ell,r_1\ell\ln\ell], \rho\in[2r_0\ell\ln\ell, 2r_1\ell\ln\ell]$ for some $r_1>r_0>0.$

Define 
\begin{align*}
	H_{ s}& =\Bigg\{u\in H^1(\mathbb{R}^N)\ \Big|\ u\text{ is even in }x_h,h= 2,\dots,N,  \\
	& u(r\cos\theta,r\sin\theta,x_3)=u\left(r\cos\left(\theta+\frac{2\pi j}\ell\right),r\sin\left(\theta+\frac{2\pi j}\ell\right),x''\right),\\
	& u(r\cos\theta,r\sin\theta,x_3)=u\left(r\cos\left(\frac{ 2\pi }\ell-\theta\right),r\sin\left( \frac{2\pi }\ell-\theta\right),x''\right)\Bigg\},
	\end{align*}
	and 
	\begin{align*}
		\mathbb H_{s}  =\Set{ (u, v)\in \mathbb H | u\in H_{ s}, v\in H_{ s} }.
	\end{align*}

Let $W  $ be the unique solution of the following problem
 \[\begin{cases}-\Delta w+w= w^{p-1},&w>0\quad \text{ in }\mathbb{R}^N,\\w(0)=\max_{x\in\mathbb{R}^N}w(x),&w(x)\in H^1(\mathbb{R}^N).\end{cases}
 \]
It is well-known that $W$ is non-degenerate and $ W(x)=W(|x|),W'<0. $
Let $U=\mu^\frac{1}{2-p}W$, $V=\nu^\frac{1}{2-p}W$. We set 
\[ U_r(x)=\sum_{j=1}^{\ell}U_{ x^j}(x),\quad V_\rho(x)=\sum_{j=1}^{2\ell}V_{ y^j}(x),
 \]
where $U_{ \xi}(x)=U (x-\xi)$, $V_\xi(x)=V(x-\xi)$ for  $\xi\in \R^N$.

In this paper, we assume 
 \[
	r \in\mathbb{D}_{1}:=\left[  \frac{m}{2\pi} \ell\ln\frac{\ell}2,\frac{m}{2\pi} \ell\ln(2\ell)\right],\ 	\rho\in \mathbb{D}_{2}:=\left[ \frac{m}{ \pi} \ell\ln\frac{\ell}2, \frac{m}{ \pi}\ell\ln(2\ell)\right].
 \] 
\medskip

We verify Theorem \ref{thm1} by proving the following result.
 \begin{thm}\label{thm1'}
	Under the assumptions of Theorem \ref{thm1}, there exists a sufficiently large constant $\ell_0 > 0$ such that for all $\ell \geq \ell_0$, the equation \eqref{eq0'} admits a solution $(u_\ell, v_\ell)$ of the form
\[
u_\ell = U_{r_\ell} + \varphi_\ell, \quad v_\ell = U_{\rho_\ell} + \psi_\ell,
\]
where $(\varphi_\ell, \psi_\ell) \in \mathbb{H}_s$, $r_\ell \in \mathbb{D}_1$, and $\rho_\ell \in \mathbb{D}_2$. Moreover, as $\ell \to \infty$, 
\[
\|(\varphi_\ell, \psi_\ell)\| \to 0.
\]
\end{thm}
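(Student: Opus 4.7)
The plan is to carry out a Lyapunov--Schmidt reduction in the spirit of \cite{PW13}, but adapted to handle the sublinear coupling exponents. I would parametrize the candidate solution as $u = U_r + \varphi$, $v = V_\rho + \psi$ with $(r,\rho) \in \mathbb{D}_1 \times \mathbb{D}_2$, decompose the perturbation space $\mathbb H_s$ orthogonally into an approximate kernel $\mathcal K_{r,\rho}$ (spanned by the derivatives of $U_r$ and $V_\rho$ with respect to the radial parameters $r,\rho$, which encode the translation symmetry consistent with the dihedral symmetry imposed in $\mathbb H_s$) and its orthogonal complement $\mathcal K_{r,\rho}^\perp$, and split the problem into two stages: (i) for each fixed $(r,\rho)$, solve the projected equation for $(\varphi,\psi)=(\varphi_{r,\rho},\psi_{r,\rho}) \in \mathcal K_{r,\rho}^\perp$; (ii) choose $(r,\rho)$ so that the remaining finite-dimensional component vanishes, i.e.\ solve the reduced problem on $\mathbb{D}_1 \times \mathbb{D}_2$.

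The classical approach to (i) applies the contraction mapping theorem in a weighted Sobolev space, but here the derivative of the coupling $\beta\,\partial_i G$ behaves like $|s_1 s_2|^{\sigma-1}$ near the zero set of either component and thus blows up in precisely the outer region where $U_r+V_\rho$ decays. To circumvent this obstruction I would follow the novel strategy outlined in the introduction: partition $\mathbb R^N$ into an inner region near the concentration points $\{x^j\}\cup\{y^j\}$ and an outer region, and in the outer region construct the perturbation not by linearization but as a \emph{local minimizer} of an associated outer boundary value problem. Following the variational gluing philosophy of Coti-Zelati--Rabinowitz and Bolle--Tanaka, this defines a complete metric space of admissible pairs $(\varphi,\psi)$ equipped with sharp a priori pointwise decay estimates, both for the functions themselves and for differences of two such pairs; the sublinearity singularity is neutralized because on this space the coupling term and its variations can be controlled by the prescribed decay rates. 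The fixed-point map combining the usual Lyapunov--Schmidt inversion in the inner region with the tail-minimization in the outer region is then a contraction, producing a unique $(\varphi_{r,\rho},\psi_{r,\rho})$ with quantitative estimates; in particular $\|(\varphi_{r,\rho},\psi_{r,\rho})\| \to 0$ as $\ell \to \infty$ uniformly in $(r,\rho)$.

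For step (ii), I would compute the reduced functional $F_\ell(r,\rho)$ obtained by evaluating the energy associated with \eqref{eq0'} at $(U_r+\varphi_{r,\rho},V_\rho+\psi_{r,\rho})$. Its leading-order expansion contains two competing contributions: a potential term coming from assumption (K), which behaves like $a_i \ell / r^m$ and $a_i \cdot 2\ell / \rho^m$ and pushes the bumps outward, and an interaction term among neighboring bumps on the same circle, which decays exponentially in $|x^j - x^{j+1}| \sim 2\pi r/\ell$ and pushes the bumps inward. The particular choice of the intervals $\mathbb{D}_1$ and $\mathbb{D}_2$, with endpoints calibrated by $\frac{m}{2\pi}\ell\ln\frac{\ell}{2}$ and $\frac{m}{2\pi}\ell\ln(2\ell)$, is precisely the range where these two effects balance. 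Since the coupling is repulsive ($\beta<0$) and segregation places the two families of bumps on disjoint circles, the cross-interaction between $U_r$ and $V_\rho$ contributes only a higher-order perturbation. A standard argument shows that $F_\ell$ attains an interior critical point $(r_\ell,\rho_\ell) \in \mathbb{D}_1 \times \mathbb{D}_2$, say via a maximization argument on the compact rectangle combined with a boundary analysis ruling out critical points on $\partial(\mathbb{D}_1\times\mathbb{D}_2)$. The pair $(U_{r_\ell}+\varphi_{\ell},V_{\rho_\ell}+\psi_{\ell})$ is then the desired solution.

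The main obstacle is establishing the contraction property in step (i), specifically on the outer metric space. This demands pointwise bounds precise enough that the sublinear coupling term $|s_1 s_2|^{\sigma+1}$ and its $\partial_{ij}G$-derivatives (cf.\ assumption (G3)) are dominated by the weight defining the metric, and that the inner-outer matching at the interface produces no leftover boundary terms that spoil the Lipschitz constant. Once this quantitative construction is in place, the rest of the argument follows the Peng--Wang template, and the smallness $\|(\varphi_\ell,\psi_\ell)\| \to 0$ is a direct consequence of the a priori estimates controlling the fixed point.
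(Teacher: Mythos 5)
Your proposal captures the high-level architecture of the paper's proof: Lyapunov--Schmidt reduction adapted by an inner/outer partition, a tail-minimization operator in the outer region building a complete metric space of admissible perturbations with a priori pointwise decay, a contraction obtained by composing the inversion with the tail-minimization projector, and then a max/min argument on the reduced functional over $\mathbb D_1\times\mathbb D_2$. In that sense the route is the same.

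However, there is a genuine gap you do not address, and it is one the paper devotes an entire subsection to: the nonlinearity $G$ is only $C^1$, and assumption (G3) permits $\partial_{ii}G(s_1,s_2)$ to blow up like $|s_i|^{\sigma-1}$ near $s_i=0$, so the energy functional $I$ is \emph{not} $C^2$ on $\mathbb H_s$. Your tail-minimization device removes the difficulty from the outer region, but the Lyapunov--Schmidt machinery you invoke still needs a well-defined Hessian and the implicit function theorem to obtain $C^1$ dependence of the fixed point $(\varphi_{r,\rho},\psi_{r,\rho})$ on $(r,\rho)$; without that, an interior maximizer of the reduced functional $F_\ell$ need not correspond to a true critical point of the full problem. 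The paper resolves this by first regularizing: it introduces $C^2$ approximations $G_n$ (via $\phi_n$), runs the whole scheme for the modified problem \eqref{eq1.1n} where $I_n\in C^2$, checks the invertibility of the linearized operator $L_n$ using Lemma \ref{lemma2.2} and the smallness of the coupling contributions (properties (Gn1)--(Gn5)), and only at the very end passes to the limit $n\to\infty$ to recover a solution of \eqref{eq0'}. This regularization and the final limiting step are not cosmetic; they are what make the implicit-function-theorem step legitimate and what justify applying contraction mapping at all in the inner region, where the second derivative of the coupling enters the linearization.

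A second, smaller imprecision: the approximate kernel in the paper is spanned by \emph{truncated} derivatives $X_j(r)=\chi_0(\cdot - x^j)\,\partial_r U_{x^j}$, $Y_j(\rho)=\chi_0(\cdot - y^j)\,\partial_\rho V_{y^j}$, not by the full derivatives of $U_r,V_\rho$; the truncation is important because the $L^\infty$ estimate (Lemma \ref{lem:2.6}) and the interface matching with the cutoff $\chi$ in Lemma \ref{lemma4.3} rely on the kernel elements being compactly supported near the concentration points. Also worth flagging explicitly: the paper proves that the direct map $T=L^{-1}(\gamma+R+N_n)$ is \emph{not} a contraction on $\Lambda_\ell$, only $ST$ on the image space $S\Lambda_\ell$ equipped with the metric $d$; the Lemmas \ref{lemma4.3}--\ref{lemma4.1} show fixed points of $ST$ and of $T$ coincide. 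You gesture at this when you say the composed map is a contraction, but making the ``restrict to $S\Lambda_\ell$'' step precise is where the a priori estimates of Section~\ref{sec3} (especially Lemmas \ref{lemma3.5} and \ref{lemmawz}) actually do their work, and it would be worth stating explicitly.
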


\begin{remark}
The sublinear coupling exponent does not pose a fundamental challenge when constructing synchronized solutions. This is due to the fact that the concentration points of the two components of synchronized solutions coincide, eliminating singularities in the linearized operator. 
By making slight adjustments to the proof of \cite{PW13}, we can directly establish the reduction framework to obtain the synchronized solutions without the intricate decomposition process.

We require $\beta < 0$ to ensure convexity of the corresponding functional in the outer region, thereby guaranteeing the existence of a minimal energy solution for the outer problem. 
This condition represents a specific technical requirement for sublinear coupling exponents. In contrast, \cite{PW13} focuses solely on cubic polynomials, enabling the construction of segregated solutions with a positive coupling coefficient $\beta$ (althouth sufficiently small).
\end{remark}
\begin{remark}
	There are other different ways to segregate the couples. For example, our method can easily handle the following scenario: one component is concentrated at the origin, while the other  is concentrated at the vertices of a regular polygon at infinity, which actually will result in a simpler proof.
		\end{remark}

The existence results in Theorem \ref{thm1} and Theorem \ref{thm1'} apply to system \eqref{eq0} for $\sigma_j>0$, $j=1,2$. However, the sublinear case $\sigma_j \in (0,1)$ exhibits fundamentally different behavior.

A key distinction in the sublinear regime is that segregated solutions may fail to remain strictly positive. Notably, a phenomenon known as the "dead core" (see \cite{Pucciserrin}) emerges in these segregated solutions. As stated in Theorem \ref{deadcore}, we can show that each component of the segregated solutions develops a large dead core region.

To establish this result, we require an additional assumption ensuring the nonlinearity is genuinely sublinear:
\begin{itemize}
	\item[(G4)]     
	There exists $\sigma'\in(0,1)$,   such that  
	 \[ \liminf_{|s_1 s_2|\to 0} \frac{s_i \partial_{i } G(s_1,s_2)}{|s_1 s_2|^{\sigma'+1}} >0\quad
	  \quad\text{for } i=1,2.\]
\end{itemize}

\begin{thm}\label{deadcore}
	  Suppose that    conditions {\rm (K)}, and {\rm (G1)}--{\rm (G4)} hold. For any sufficiently large $\ell$,  the solution
	$(u_\ell, v_\ell)$ obtained in Theorem \ref{thm1'} satisfies
	\[
	u_\ell=0\mbox{ in } \bigcup_{j=1}^{2\ell} B_{\frac{m+\tau}2\ln \ell}(y^j), \quad   v_\ell=0 \mbox{ in } \bigcup_{j=1}^\ell B_{\frac{m+\tau}2\ln \ell}(x^j).
	\]
	Specifically, when $N=2$,
	\[
	u_\ell(x)=0 \mbox{ if }|x|\geq |y^1|-\sqrt{m\tau/2}\ln\ell,\quad v_\ell(x)=0 \mbox{ if }|x|\leq  |x^1|+\sqrt{m\tau/2}\ln\ell.
	\]
\end{thm}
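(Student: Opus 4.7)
\noindent\emph{Strategy.} Write $u_\ell=U_{r_\ell}+\varphi_\ell$, $v_\ell=V_{\rho_\ell}+\psi_\ell$ as in Theorem~\ref{thm1'}. The plan is: first prove the local dead-core statement in each ball $B_R(y^j)$ with $R=\tfrac{m+\tau}{2}\ln\ell$ (and the symmetric one around $x^j$), by combining a sublinear absorption inequality coming from $\beta<0$ and (G4) with a V\'azquez-type radial barrier; then upgrade to the $N=2$ global claim via a trigonometric observation about the points $y^j$ on the circle of radius $\rho_\ell$, together with an energy argument on the exterior. A key preliminary is a super-polynomial pointwise decay: upgrading the $H^1$-smallness of $(\varphi_\ell,\psi_\ell)$ by Moser-type bootstrap applied to the outer problem underlying the construction should yield $\|u_\ell\|_{L^\infty(B_R(y^j))}\to 0$ faster than any polynomial in $1/\ell$ (since every $x^i$ lies at distance $\gtrsim r_\ell$ from this ball), and symmetrically for $v_\ell$ on $B_R(x^j)$.

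\noindent\emph{Local dead core.} Using $\beta<0$ and (G4), whenever $u_\ell v_\ell$ is small enough one has $|\beta|\partial_1 G(u_\ell,v_\ell)\geq c_1|\beta|\,u_\ell^{\sigma'}v_\ell^{\sigma'+1}$, so
\[
-\Delta u_\ell + K_1(x)\,u_\ell + c_1|\beta|\,u_\ell^{\sigma'}v_\ell^{\sigma'+1}\leq \mu u_\ell^{p-1}\quad\text{in } B_R(y^j).
\]
Since $u_\ell\to 0$ uniformly in $B_R(y^j)$ and $p-1>\sigma'$, the right-hand side is absorbed by half of the sublinear term, leaving $-\Delta u_\ell + \tfrac{c_1|\beta|}{2}u_\ell^{\sigma'}v_\ell^{\sigma'+1}\leq 0$. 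One then compares $u_\ell$ with the V\'azquez-type radial supersolution $\Phi(x)=A(|x-y^j|-R)_+^{2/(1-\sigma')}$ on a slightly larger ball $B_{R+\delta}(y^j)$, choosing $A\leq c^{1/(1-\sigma')}$ in terms of $c=\tfrac{c_1|\beta|}{2}\inf_{B_{R+\delta}(y^j)}v_\ell^{\sigma'+1}$ and $\delta>0$ small enough that $A\delta^{2/(1-\sigma')}$ dominates $\|u_\ell\|_{L^\infty(\partial B_{R+\delta}(y^j))}$. Since $\Phi\equiv 0$ on $B_R(y^j)$ and $\Phi\geq u_\ell$ on $\partial B_{R+\delta}(y^j)$, the standard comparison principle for sublinear absorption gives $u_\ell\leq\Phi$, hence $u_\ell\equiv 0$ on $B_R(y^j)$. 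The statement for $v_\ell$ near each $x^j$ is symmetric.

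\noindent\emph{Global statement for $N=2$.} For $N=2$, the $2\ell$ points $y^j$ sit on a circle of radius $\rho_\ell\approx(m/\pi)\ell\ln\ell$ with angular spacing $\pi/\ell$. At the midpoint angle between consecutive $y^j$'s, the distance from a point at radius $\rho_\ell-\delta$ to the nearest $y^j$ satisfies
\[
|x-y^j|^2 \;=\; \delta^2 + 4\rho_\ell(\rho_\ell-\delta)\sin^2\!\bigl(\tfrac{\pi}{4\ell}\bigr)\;\approx\; \delta^2 + \tfrac{m^2}{4}(\ln\ell)^2,
\]
which equals $R^2$ exactly when $\delta=\tfrac12\sqrt{2m\tau+\tau^2}\,\ln\ell\geq\sqrt{m\tau/2}\,\ln\ell$. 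Hence the entire circle $\{|x|=\rho_\ell-\sqrt{m\tau/2}\ln\ell\}$ lies inside $\bigcup_{j} B_R(y^j)$, and the local result gives $u_\ell\equiv 0$ on it. Set $\Omega:=\{|x|>\rho_\ell-\sqrt{m\tau/2}\ln\ell\}$. Every $x\in\Omega$ satisfies $|x-x^j|\geq r_\ell-\sqrt{m\tau/2}\ln\ell\sim r_\ell$, so by the pointwise decay above, $\|u_\ell\|_{L^\infty(\Omega)}\to 0$. Testing the equation against $u_\ell\in H^1_0(\Omega)$ (valid since $u_\ell\equiv 0$ on $\partial\Omega$ and decays at infinity), and dropping the nonnegative coupling term,
\[
\int_\Omega\!\bigl(|\nabla u_\ell|^2+K_1 u_\ell^2\bigr)dx\;\leq\;\mu\int_\Omega u_\ell^p\,dx\;\leq\;\mu\,\|u_\ell\|_{L^\infty(\Omega)}^{p-2}\int_\Omega u_\ell^2\,dx,
\]
which forces $u_\ell\equiv 0$ in $\Omega$ for $\ell$ large. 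The claim for $v_\ell$ on $\{|x|<r_\ell+\sqrt{m\tau/2}\ln\ell\}$ is symmetric, now using the $\ell$ points $x^j$ and the interior disk.

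\noindent\emph{Main obstacle.} The delicate point is the barrier argument: $v_\ell^{\sigma'+1}$ is not bounded below by a fixed positive constant throughout $B_R(y^j)$ (it can decay as low as $\ell^{-(m+\tau)(\sigma'+1)/2}$ near the outer sphere), so both $A$ and $\delta$ must be chosen in an $\ell$-dependent way, and the boundary smallness of $u_\ell$ must beat this to yield an effective dead-core radius reaching the desired $R$. Fortunately the decay of $u_\ell$ on $\partial B_{R+\delta}(y^j)$ is super-polynomial while that of $\inf v_\ell^{\sigma'+1}$ is only polynomial in $\ell$; this gap ensures a workable choice of $(A,\delta)$ with $\delta\to 0$, any $o(1)$ loss being absorbed into $\tau$.
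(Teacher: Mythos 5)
Your proposal is correct and follows the same overall strategy as the paper: establish that $u_\ell$ is a subsolution of a sublinear absorption inequality $-\Delta u + c\,u^{\sigma'}\le 0$ in each ball $B_{R+\delta}(y^j)$, construct a radial supersolution vanishing on $B_R(y^j)$ and dominating $u_\ell$ on $\partial B_{R+\delta}(y^j)$, and then, for $N=2$, combine the geometric fact that the circle $\{|x|=\rho_\ell-\sqrt{m\tau/2}\ln\ell\}$ is covered by the dead-core balls with an energy test on the exterior $H_0^1$ domain. The chief difference is in the barrier: you work directly with the explicit profile $\Phi(x)=A(|x-y^j|-R)_+^{2/(1-\sigma')}$, choosing the amplitude $A\sim c^{1/(1-\sigma')}$ to validate the supersolution inequality, whereas the paper introduces a separate Lemma~\ref{lem5.1} producing a one-parameter family of ODE dead-core profiles and then rescales one of them to a barrier $w_\ell$. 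These are essentially equivalent --- indeed the paper's proof of Lemma~\ref{lem5.1} uses the very same power-law supersolution you write down --- so your route is arguably more economical, bypassing the normalization step entirely and making the geometric covering argument for $N=2$ (which the paper states somewhat tersely) explicit via the law of cosines.

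A few imprecisions worth tightening if this were to be written out. First, the super-polynomial smallness of $u_\ell$ on each $B_{R+\delta}(y^j)$ does not really need a fresh Moser iteration; it is already a consequence of the pointwise bound $|u_\ell-U_{r_\ell}|\le C\ell^{-m/2-\tau}(\ln\ell)^\tau\sum_j e^{-\tau|x-x^j|}$ from Lemma~\ref{lemma3.5}, since $\dist(y^j,\{x^i\})\gtrsim\ell\ln\ell$. Second, the phrase ``$\delta>0$ small enough that $A\delta^{2/(1-\sigma')}$ dominates $\|u_\ell\|_{L^\infty(\partial B_{R+\delta})}$'' points in the wrong direction --- decreasing $\delta$ only hurts; what you actually need is that any fixed $\delta$ (or $\delta\sim\tau\ln\ell$, as the paper effectively takes) gives $A\delta^{2/(1-\sigma')}$ polynomially small while $u_\ell$ on the boundary is super-polynomially small, so the boundary comparison holds with room to spare. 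Third, the worry that some ``$o(1)$ loss must be absorbed into $\tau$'' is unnecessary: the explicit barrier $\Phi$ vanishes exactly on $B_R(y^j)$, so comparison delivers the stated radius $\frac{m+\tau}{2}\ln\ell$ with no loss. None of these affect the validity of the argument; they are only presentation issues.
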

Complete segregation cannot occur; that is, the supports of $u$ and $v$ cannot be disjoint. Indeed, if the supports were disjoint, there would be no interaction between the components, implying that each function would independently solve a single equation. However, this contradicts the fact that both solutions have compact support.
A stronger argument confirms this conclusion through the strong unique continuation property established in \cite{Clapp25}. This result asserts that if $u$ and $v$ simultaneously vanish to infinite order at some point, then both functions must be identically zero. 

\begin{remark}
 	In this present work, we are investigating a Sobolev subcritical problem. Actually, our method remains applicable to elliptic systems with Sobolev critical exponents in higher dimensions ($N\geq5$), which also involve sublinearly coupled nonlinearities.  
\end{remark}

	\medskip


\subsection{Organization} The structure of the paper is as follows. In Section 2, some modifications and basic estimates are introduced for the proof of Theorem \ref{thm1'}. In Section 3, we construct a minimization problem for the outer region and prove various a priori estimates for the solution of the exterior problem, which are crucial for the smooth progress of the subsequent reduction.
 Section 4 is the core of this work, where we address the challenge of sublinear coupling  by using the result from Section 3.
 We prove the existence of solutions using the fixed point theorem in the orthogonal complement of the kernel space, reducing it to a finite-dimensional problem. The final section completes the proof of the theorem by solving the finite-dimensional problem.

\section{Preliminaries}\label{sec2}

 Denote 
	\begin{align*}  \Omega_j&=\Set{z=(z',z'')\in\mathbb{R}^N | \left\langle\frac{z'}{|z'|},\frac{x'^j}{|x'^j|}\right\rangle\geqq\cos\frac{\pi}{\ell} }, j=1,\ldots,\ell,\\
		\widetilde\Omega_j&=\Set{z=(z',z'')\in\mathbb{R}^N | \left\langle\frac{z'}{|z'|},\frac{y'^j}{|y'^j|}\right\rangle\geqq\cos\frac{\pi}{2\ell} }, j=1,\ldots,2\ell.
	 \end{align*}	

	 Let $\chi_0\in C_0^1(\R^N)$ be a fixed truncation function such that $\chi_0=1$ in $B_{1}(0)$ and $\chi_0=0$ in $\R^N\setminus B_2(0)$.
		Let 
	\begin{align*}
X(r) &= \sum_{j=1}^\ell X_j(r), \quad \text{where} \quad X_j(r)(x) = \chi_0(x - x^j) \frac{\partial U_{x^j}}{\partial r}(x), \quad j = 1, \ldots, \ell, \\
Y(\rho) &= \sum_{j=1}^{2\ell} Y_j(\rho), \quad \text{where} \quad Y_j(\rho)(x) = \chi_0(x - y^j) \frac{\partial V_{y^j}}{\partial \rho}(x), \quad j = 1, \ldots, 2\ell.
\end{align*}

Define
\[
\mathbb{E}= \mathbb E_{r,\rho} = E_r \times E_\rho,
\]
where
\begin{align*}
E_r &= \Set{ u \in H_s | \int_{\R^N} {X}(r) u=0} = \Set{ u \in H_s |  \int_{\R^N}X_j(r)u=0, \quad j = 1, 2, \ldots, \ell  }, \\
E_\rho &= \Set{ v \in H_s | \int_{\R^N} Y(\rho) v=	0} =\Set{v\in H_s | \int_{\R^N}Y_j(\rho)v=0, \quad j = 1, 2, \ldots, 2\ell  }.
\end{align*}
		It is clear that $\mathbb{E}$  is a closed subspace of the Hilbert space $\mathbb H_s$.

\subsection{Modifications}   

Let 
\begin{align}\label{I}
	 I\left(u,v\right) =&\frac{1}{2}\int_{\mathbb{R}^N}\big(|\nabla u|^2+K_1(|x|)u^2+|\nabla v|^2+  K_2(|x|)v^2\big)  \nonumber \\
	 &-\frac1{ p}\int_{\mathbb{R}^N}\bigl(\mu|u|^{p}+\nu|v|^{p}\bigr)-\beta\int_{\mathbb{R}^N}G(u,v),\quad (u,v)\in\mathbb  H. 
	\end{align}
	Then, formally, the critical points of $ I $ are solutions to \eqref{eq0'}.
\medskip

Note that    the functional $I(u,v)$ in \eqref{I} is possibility not of class $C^2$  on $\mathbb H_s$. 
To address this issue, we proceed with some modification.

First by (G1),
we know 
\begin{equation}\label{eqnabla}\nabla G(s_1, s_2)=(0,0)\quad  \text{if}\  s_1=0 \text{ or } s_2=0.\end{equation}
Then by (G2), we have 
\[\partial_i G(s_1, s_2) \geq 0\  \text{if}\  s_i\geq 0 \mbox{ for } i=1,2, \mbox{ and hence, } G\geq 0 \mbox{ in } \R^2.
\]
By (G3), for any $\alpha>0$, there is $s_\alpha>0$ such that for each pair $(i,j)$,
\begin{equation}\label{sij}
|s_is_j\partial_{ij} G(s_1, s_2)|\leq |s_1 s_2|^{\sigma+1} \text{ for } (s_1,s_2)\in [-\alpha, \alpha]\times [-s_\alpha, s_\alpha] \cup [-s_\alpha, s_\alpha]\times [-\alpha, \alpha].
\end{equation}
 
  By $s_i\partial_i G(s_1,s_2)\geq0$ and $\beta\leq 0$, a solution $(u,v)$ of 
\eqref{eq0'} satisfies
\begin{equation}\label{eqsub}
		\left\{\begin{aligned}-\Delta |u|+K_1(x)|u|&\leq \mu |u|^{p-1},\quad x\in\mathbb{R}^N,
		\\
		-\Delta |v|+K_2(x)|v|&\leq\nu |v|^{p-1},\quad x\in\mathbb{R}^N.\end{aligned}\right.
	\end{equation}
Therefore, we can get that for some $\alpha_0>4\max\{U(0),V(0)\}$ independent of $G$ such that 
any solution $(u,v)\in \mathbb H$ satisfies \eqref{eq0'} with $\|(u,v)-(U_\ell, V_\ell)\|<1$ will satisfy a priori estimate
\begin{equation}\label{alpha0}\|u\|_{L^\infty(\R^N)}+\|v\|_{L^\infty(\R^N)}\leq \frac{\alpha_0}{2}.\end{equation}
  Choose smooth functions
   ${\phi_n}$ such that 
   $\phi_n(0)=\phi_n'(0) =0$, 
   $\phi_n'(t)>0$ for each $t\in(0, 2\alpha_0)$, 
   $\phi_n''(t)>0$ for each $t\in(0, 1/n)$,
   $\phi_n'(t)=1$ for $t\in [1/n,\alpha_0]$,
     $\phi_n'(t)=0$ for $t\geq 2\alpha_0$.
	 We define 
	 \[
	 G_n(s_1, s_2)=G(\phi_n(|s_1|), \phi_n(|s_2|)).
	 \]
    Assumptions (G2) and (G3) ensure that $\partial_{ij}G$ is continuous on $\mathbb{R}^2$ for $i \neq j$. Consequently, the same regularity holds for $\partial_{ij}G_n$, $i\neq j$.
	 By \eqref{eqnabla} and $\phi_n'(0)=0$, it is easy to check that 
	 $\partial_{11}G_n(0, s)=0$ for each $s\in\R$.
	 By the L'Hospital rule, we observe that
	  $  (\phi_n'(t))^2=O(\phi_n (t))$ as $t\to 0^+$.
	 Consequently, using \eqref{sij}, as $(s_1, s_2)\to (0, s)$,  we obtain 
	 \begin{align*}
	 \partial_{11}G_n(s_1, s_2)=&\partial_{11}G(\phi_n(|s_1|), \phi_n(|s_2|))(\phi_n'(|s_1|))^2 +\partial_1 G(\phi_n(|s_1|), \phi_n(|s_2|))\phi_n''(|s_1|)\\
	 =& O(\phi_n^\sigma(|s_1|)) \to 0.
	 \end{align*}
Therefore,   $\partial_{11} G_n$ is continuous at $(0, s)$ for all $s\in\R$.  An analogous argument applies to $\partial_{22}G_n$, yielding continuity at points $(s, 0)$. Then we conclude that $G_n\in C^2(\R^2)$.
Moreover,  it is straightforward to verify that the following conditions hold: 
\begin{enumerate}
    \item[(Gn1)]
	$G_n\in C^2(\R^2)$,  $G_n(s_1, s_2)=G_n(|s_1|, |s_2|)$ for any $(s_1,s_2)\in \R^2$,  $\nabla G_n(s_1, s_2)=(0,0)$ and $G_n(s_1,s_2)=0$ if $s_1=0$ or $s_2=0$;

\item[(Gn2)] For each \( i = 1, 2 \), the partial derivative \( \partial_i G_n(s_1, s_2) \) is increasing in \( s_i \) whenever \( |s_i| \leq \alpha_0  \), \( \partial_i G_n(s_1, s_2) = 0 \) whenever \( |s_i| \geq 2\alpha_0 \);

    \item[(Gn3)] 
 There exists \( s_0 \in (0,\alpha_0) \) such that for all \( n \),    
    \[
    | \partial_{12} G_n(s_1, s_2)| \leq |s_1 s_2|^{\sigma}, 
    \]
	if $(s_1, s_2) \in \big([-\alpha_0, \alpha_0] \times [-s_0, s_0]\big) \cup \big([-s_0, s_0] \times [-\alpha_0, \alpha_0]\big)$,
and	for $i=1,2$, $j\neq i$,
	\[
	| \partial_{ii} G_n(s_1, s_2)|\leq (|s_i|-\frac1n)^{\sigma-1}|s_j|^{\sigma+1}, \mbox{ if } \frac{1}{n}< |s_i|\leq \alpha_0, |s_j|\leq s_0.
	\]

    \item[(Gn4)] For \( i = 1, 2 \), as \( n \to \infty \),
    \[
    G_n(s_1, s_2) \to G(s_1, s_2) \quad \text{and} \quad \partial_i G_n(s_1, s_2) \to \partial_i G(s_1, s_2)
    \]
    uniformly for all \( (s_1, s_2) \in \left[-\alpha_0, \alpha_0\right]^2 \).

    \item[(Gn5)] For     \( i = 1, 2 \) and each \( (s_1, s_2)\in\R^2 \),
    \[
     0\leq  s_i\partial_i G_n(s_1, s_2) \leq s_i\partial_i G(s_1, s_2)  \text{ and }  0 \leq G_n(s_1, s_2) \leq  \min\{G(s_1,s_2), G(2\alpha_0, 2\alpha_0)\}.
    \]
\end{enumerate}
 In what follows, we always assume $n\geq n_\ell$
 for some sufficiently large \begin{equation}\label{nl}
	n_\ell>4(\ln\ell)^5.
 \end{equation}
 
We consider the perturbed   system
\begin{equation}
	\label{eq1.1n}
	\left\{\begin{aligned}-\Delta u+K_1(|x|)u&=\mu |u|^{p-2}u+\beta \partial_1G_n (u, v),\quad x\in\mathbb{R}^N,
	\\
	-\Delta v+K_2(|x|) v&=\nu |v|^{p-2}v+\beta \partial_2 G_n(u,v),\quad x\in\mathbb{R}^N.\end{aligned}\right.
\end{equation}
 
Let 
\[\begin{aligned}
	 I_n\left(u,v\right) =&\frac{1}{2}\int_{\mathbb{R}^N}\big(|\nabla u|^2+K_1(|x|)u^2+|\nabla v|^2+ K_2(|x|)v^2\big)   \\
	 &-\frac1{ p}\int_{\mathbb{R}^N}\bigl(\mu|u|^{p}+\nu|v|^{p}\bigr)-  \beta\int_{\mathbb{R}^N}G_n(u, v),\quad (u,v)\in\mathbb  H_s. 
	\end{aligned}\]
	It is straightforward to verify that $I_n\in C^2(\mathbb H_s)$, and its critical points correspond to solutions of \eqref{eq1.1n}.
Let 	\[ J_n(\varphi,\psi) =I_n\left(U_r+\varphi, V_\rho+\psi\right),\quad (\varphi,\psi)\in \mathbb E. \]
Note that $J_n\in C^2(\mathbb E)$ , and we denote its gradient in   $\mathbb E$ by $\nabla_{\mathbb E} J_n$.

\subsection{Basic estimates}
We consider the following bilinear form
\begin{equation}\label{Hessiann}
	\begin{aligned}
	\left\langle L (u,v),(\varphi,\psi)\right\rangle_\ell &  =\int_{\mathbb{R}^N}\bigl(\nabla u\nabla\varphi+K_1(|x|)u\varphi-(p-1)\mu  U_r^{p-2}u\varphi\bigr)  \\
	&+\int_{\mathbb{R}^N}\left(\nabla v\nabla\psi+ K_1(|x|)v\psi-(p-1)\nu  V_\rho^{p-2}v\psi\right).
	\end{aligned}\end{equation}

By the Riesz representation theorem, $L$ can be regarded as a bounded linear operator on $\mathbb H_s$.	Note that 
$L(u,v)=(L_r u, L_\rho v)$ where each $L_i:H_s\to H_s$, for  $i=r,\rho$, is defined by
\[ 
	\begin{aligned}
	 \langle L_r u, \varphi \rangle  &  =\int_{\mathbb{R}^N}\bigl(\nabla u\nabla\varphi+K_1(|x|)u\varphi-(p-1)\mu  U_r^{p-2}u\varphi\bigr)  \\
	 \langle  L_\rho v, \psi \rangle &=\int_{\mathbb{R}^N}\left(\nabla v\nabla\psi+ K_2(|x|)v\psi-(p-1)\nu  V_\rho^{p-2}v\psi\right).
	\end{aligned}\]

	At the end of this section, we present several key lemmas.
	The first one parallels a result from reference \cite{WY}:
\begin{lem}\label{lemma2.2} There exists $\varrho>0$ independent of $\ell, r$  such that for any $r \in \mathbb{D}_{1}$,  $\rho\in \mathbb D_2$
\[
\|{L}_r u\|  \geq  2{\varrho}\|u\|, u\in E_r,\quad   \|{L}_\rho v\|  \geq  2{\varrho}\|v\|, v\in E_\rho.
\]
\end{lem}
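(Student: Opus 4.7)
The plan is to argue by contradiction, following the standard localization-and-blow-up strategy of \cite{WY}. I focus on $L_r$; the argument for $L_\rho$ is entirely analogous.

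Suppose the claim fails: there exist sequences $\ell_n\to\infty$, $r_n\in\mathbb{D}_1$, and $u_n\in E_{r_n}$ with $\|u_n\|=1$ and $\|L_{r_n}u_n\|=o(1)$. Testing $\langle L_{r_n}u_n,u_n\rangle_\ell=o(1)$ and using $K_1\geq 1$ gives
\[
	(p-1)\mu\int_{\R^N}U_{r_n}^{p-2}u_n^2\;\geq\;1-o(1),
\]
and the $\mathbb{Z}_\ell$-symmetry built into $H_s$ distributes this weighted mass equally among the $\ell_n$ potential wells centered at $\{x^j\}$. Localize and rescale: set $\hat u_n(y)=\sqrt{\ell_n}\,u_n(y+x^1)$, so that by symmetry $\|\hat u_n\|_{H^1(B_R(0))}\leq 1$ for every fixed $R>0$ and $n$ large enough (since $B_R(x^1)\subset\Omega_1$). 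Extracting a subsequential limit $\hat u_n\rightharpoonup\hat u_\infty$ weakly in $H^1_{loc}$ and strongly in $L^q_{loc}$ for $q<2^*$, the rescaled mass identity $\int_{B_R}U^{p-2}\hat u_n^2\gtrsim 1$ (up to exponentially small cross contributions controlled by $|x^j-x^k|\gtrsim m\ell_n\ln\ell_n$ and the exponential decay of $U$) yields $\hat u_\infty\not\equiv 0$.

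To extract the limit PDE, I would test $L_{r_n}u_n$ against the $\mathbb{Z}_\ell$-symmetrized function $\phi_{\mathrm{sym}}(x)=\sum_{j=1}^{\ell_n}\psi(R_j^{-1}(x-x^j))$, where $R_j$ is the rotation sending $x^1$ to $x^j$ and $\psi\in C_c^\infty(B_R)$ is even in $y_2,\ldots,y_N$ (so $\phi_{\mathrm{sym}}\in H_s$). Since the supports are essentially disjoint, $\|\phi_{\mathrm{sym}}\|\approx\sqrt{\ell_n}\|\psi\|$, and by symmetry of $u_n$ the bilinear form reduces to
\[
	\langle L_{r_n}u_n,\phi_{\mathrm{sym}}\rangle_\ell=\sqrt{\ell_n}\int_{B_R}\bigl(\nabla\hat u_n\nabla\psi+K_1(\cdot+x^1)\hat u_n\psi-(p-1)\mu U_{r_n}^{p-2}(\cdot+x^1)\hat u_n\psi\bigr).
\]
The $\sqrt{\ell_n}$ factors cancel against $\|\phi_{\mathrm{sym}}\|$ on the RHS, and passing to the limit (with $K_1(\cdot+x^1)\to 1$ and $U_{r_n}^{p-2}(\cdot+x^1)\to U^{p-2}$ uniformly on compacta) gives $-\Delta\hat u_\infty+\hat u_\infty=(p-1)\mu U^{p-2}\hat u_\infty$ in $\R^N$.

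By non-degeneracy of $W$, the kernel of this operator equals $\mathrm{span}\{\partial_{y_i}U\}_{i=1}^N$; the evenness of $\hat u_\infty$ in $y_2,\ldots,y_N$ forces $\hat u_\infty\in\mathrm{span}\{\partial_{y_1}U\}$. Using $\partial U_{x^1}/\partial r=-\partial_{y_1}U(\cdot-x^1)$, the orthogonality $\int X_1(r_n)u_n=0$ rescales to $\int\chi_0\,\partial_{y_1}U\cdot\hat u_n=0$, which passes to the limit by compact support of the test function; combined with $\hat u_\infty\in\mathrm{span}\{\partial_{y_1}U\}$ and $\int\chi_0(\partial_{y_1}U)^2>0$, this forces $\hat u_\infty=0$, contradicting $\hat u_\infty\not\equiv 0$. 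The main obstacle is the careful bookkeeping of cross contributions --- confirming that both the errors from distant bumps in $U_{r_n}^{p-2}$ and the cross terms in $\|\phi_{\mathrm{sym}}\|$ decay exponentially at the rescaled scale --- which relies crucially on the exponential decay of $W$ and the quantitative spacing enforced by $r_n\in\mathbb{D}_1$.
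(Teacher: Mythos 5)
Your blow-up argument is correct and is essentially the same localization-and-rescaling strategy that the paper delegates to \cite{WY}; the paper gives no independent proof of this lemma, citing that reference instead. One small caution for the ``entirely analogous'' $L_\rho$ case: the $y^j$ split into two $\mathbb{Z}_\ell$-orbits (odd and even indices), and you need the reflection symmetry $\theta\mapsto \tfrac{2\pi}{\ell}-\theta$ built into $H_s$ to get the required evenness of the rescaled limit at the off-axis point $y^2$ --- exactly the point the paper flags in the remark following the lemma.
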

\begin{remark}Note that the additional  symmetry condition in $H_s$ compared to \cite{PW13}, 
 \[
	u(r\cos\theta,r\sin\theta,x_3)=u\left(r\cos\left(\frac{ 2\pi }\ell-\theta\right),r\sin\left( \frac{2\pi }\ell-\theta\right),x''\right)
 \]
 guarantees the validity of $\|{L}_\rho v\|  \geq  2{\varrho}\|v\|$.
\end{remark}

 Next, we recall an   $L^\infty$ estimate from \cite{WZZ} for later utilization:
\begin{lem}\label{lem:2.6}
Assume  $R>0$, 
$k\in \N\setminus\{0\}$, 
$\{u_i\}_{i=1}^k\subset C_0^1(B_R(0))$ with 
$\int_{B_R(0)}u_iu_j=\int_{B_R(0)}u_i=0$ ($i,j=1,\dots,k$, $i\neq j$).
If $\phi\in H^1(B_{R+2}(0))$ satisfies
\[
\int_{B_{R+2}(0)} \nabla \phi\nabla v+b_1\phi v
=\int_{B_{R+2}(0)} b_2 v,\quad\forall v\in H_n,
\]
where
$H_n=\big\{v\in H_0^1(B_{R+2}(0))\ \big|\ \int_{B_R(0)}vu_i=0,\ i=1,\dots,k\big\}
$ and $b_1,b_2\in L^q(B_{R+2}(0))$ for some $q>\frac N2$,
then
\[\norm \phi_{L^\infty(B_{R+1}(0))}\leq C\Big(\norm\phi_{H^1(B_{R+2}(0))}
+\norm {b_2}_{L^q(B_{R+2}(0))}\Big),
\]
where $C>0$ is a constant depending only on $u_i$, $N$, $q$,
$\norm{b_1}_{L^q(B_R(0))}$ and
$\sup_{y\in B_{R+1}(0)}\norm{b_1^-}_{L^q(B_1(y))}$.
Moreover, if $u_i=0$ for all $i=1,\dots,k$, then the constant $C$ depends only on $N$, $q$ and
$\sup_{y\in B_{R+1}(0)}\norm{b_1^-}_{L^q(B_1(y))}$.
\end{lem}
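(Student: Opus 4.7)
The strategy is to convert the constrained variational identity into an unconstrained linear elliptic equation via Lagrange multipliers, and then invoke a standard local $L^\infty$ bound (Moser iteration, as in Gilbarg--Trudinger Theorem~8.17) on the resulting equation.

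First I would extend each $u_i$ by zero to obtain $u_i\in H_0^1(B_{R+2}(0))$. Because $\int_{B_R(0)} u_i u_j=0$ for $i\neq j$ (and the $u_i$ are linearly independent in $L^2$, assuming $\int u_i^2>0$), any $v\in H_0^1(B_{R+2}(0))$ admits the unique decomposition $v=\tilde v+\sum_{j=1}^k c_j u_j$ with $c_j=\|u_j\|_{L^2(B_R)}^{-2}\int_{B_R} u_j v$ and $\tilde v\in H_n$. Substituting this into the hypothesis and rearranging yields real constants $\lambda_1,\dots,\lambda_k$ for which
\[
\int_{B_{R+2}(0)} \nabla\phi\nabla v + b_1\phi v \;=\; \int_{B_{R+2}(0)} \Bigl(b_2+\sum_{i=1}^k \lambda_i u_i\Bigr) v,\qquad \forall\, v\in H_0^1(B_{R+2}(0)).
\]
Testing this identity with $v=u_j$ and using orthogonality gives the explicit formula
\[
\lambda_j\,\|u_j\|_{L^2(B_R)}^2 \;=\; \int_{B_{R+2}(0)} \nabla\phi\nabla u_j + b_1\phi u_j - b_2 u_j,
\]
so H\"older's inequality (writing $q>N/2$, applying Sobolev to control $\phi$) produces
\[
|\lambda_j| \;\leq\; C(u_j,N,q,\|b_1\|_{L^q(B_R)})\Bigl(\|\phi\|_{H^1(B_{R+2}(0))}+\|b_2\|_{L^q(B_{R+2}(0))}\Bigr).
\]

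Second, I would read the identity as the weak form of
\[
-\Delta\phi + b_1\phi = \tilde b_2,\qquad \tilde b_2:=b_2+\sum_{i=1}^k\lambda_i u_i \in L^q(B_{R+2}(0)),
\]
so that $\phi$ is a weak solution of a genuine linear elliptic equation on $B_{R+2}(0)$. I would cover $\overline{B_{R+1}(0)}$ by finitely many balls $B_1(y_m)$ (with $y_m\in B_{R+1}(0)$) and apply the standard local boundedness estimate on each $B_{3/2}(y_m)\subset B_{R+2}(0)$: the Moser iteration only requires control of the negative part $b_1^-$ on $L^q(B_1(y_m))$ (the positive part contributes favorably), yielding
\[
\|\phi\|_{L^\infty(B_1(y_m))}\;\leq\; C\bigl(N,q,\sup_{y}\|b_1^-\|_{L^q(B_1(y))}\bigr)\Bigl(\|\phi\|_{L^2(B_{3/2}(y_m))}+\|\tilde b_2\|_{L^q(B_{3/2}(y_m))}\Bigr).
\]
Taking the maximum over the finite cover and inserting the bound on $|\lambda_j|$ (together with $\|u_i\|_{L^\infty}$ to estimate $\|\sum \lambda_i u_i\|_{L^q}$) produces the claimed inequality; if all $u_i=0$ there are no multipliers, $\tilde b_2=b_2$, and the dependence on $\|b_1\|_{L^q(B_R)}$ disappears, matching the refined statement.

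The main obstacle is the careful bookkeeping of constant dependencies: one must verify that the Moser/De\,Giorgi iteration depends on $b_1$ only through $\sup_y\|b_1^-\|_{L^q(B_1(y))}$ and not on a global $L^q$-norm, while the Lagrange multiplier construction inevitably brings in $\|b_1\|_{L^q(B_R)}$ via the term $\int b_1\phi u_j$. Threading these two different dependencies through the final constant (and confirming they decouple when the $u_i$ vanish) is what makes the precise form of the statement delicate; everything else is a direct appeal to classical linear theory.
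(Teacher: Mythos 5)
Your proof is essentially correct, and the structure you use — Lagrange multipliers to unconstrain the variational identity, followed by a local Moser/De\,Giorgi bound applied over a finite cover — is the natural argument for this statement. Note that the paper itself does not prove this lemma; it simply cites it from the reference [WZZ] (Wang--Zhang--Zhang), so there is no internal proof to compare against.

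One small bookkeeping slip worth fixing: for $y_m$ near $\partial B_{R+1}(0)$, the ball $B_{3/2}(y_m)$ is not contained in $B_{R+2}(0)$, so the local estimate cannot be applied on the pair $B_1(y_m)\subset B_{3/2}(y_m)$ as written. You should instead cover $\overline{B_{R+1}(0)}$ by balls $B_{1/2}(y_m)$ with $y_m\in B_{R+1}(0)$ and run the interior estimate on the pair $B_{1/2}(y_m)\subset B_1(y_m)\subset B_{R+2}(0)$; this also lines up exactly with the stated constant's dependence on $\sup_{y\in B_{R+1}}\|b_1^-\|_{L^q(B_1(y))}$. Since the final bound is a supremum (not a sum) over the cover, its cardinality does not enter the constant, so no dependence on $R$ is introduced. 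Otherwise your argument is sound: the multiplier identity $\lambda_j\|u_j\|_{L^2(B_R)}^2=\int\nabla\phi\nabla u_j+b_1\phi u_j-b_2 u_j$ is correctly derived and estimated via H\"older and Sobolev (the exponent check $q'=q/(q-1)<N/(N-2)\le 2^*$ does hold for $q>N/2$), and the observation that only $b_1^-$ enters the Moser iteration for the one-sided bounds of $\phi^\pm$ — while the favorably signed $b_1^+$ term can be dropped — is exactly what yields the claimed refined dependence.
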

Finally, we state a lemma  concerning the interaction  estimate of the nonlinear term:
\begin{lem}\label{Appendix1}
	Let $r\in \mathbb D_1$, $\rho\in \mathbb D_2$. Then for large $\ell$,
	\begin{align*}&U_r^{p-1}-\sum_{j=1}^\ell  U_{x^j}^{p-1} =O( \ell^{-\frac{m}{2}-2\tau}) \sum_{j=1}^\ell e^{-\frac{p-2}{2} |x-x^j|},\\
		&V_\rho^{p-1}-\sum_{j=1}^{2\ell}  V_{y^j}^{p-1} =O( \ell^{-\frac{m}{2}-2\tau}) \sum_{j=1}^{2\ell} e^{-\frac{p-2}{2} |x-y^j|},
	\end{align*}
	where $\tau \in \left(0,\frac1{16}\min\{1,p-2\}\right)$.
\end{lem}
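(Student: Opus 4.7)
Both estimates have identical structure; I focus on the first (for $U_r$). The plan is to reduce to a pointwise bound in a single symmetry cell, apply an algebraic bound on $(\sum a_k)^{p-1}-\sum a_k^{p-1}$, and then balance the exponential decay of $W$ against the geometric spacing of the vertex configuration. First, the rotational symmetry of the set $\{x^j\}$ allows me to fix $x$ in the closed cone $\Omega_1$, so that $x^1$ is the closest vertex to $x$ among all $x^k$. Iterating the elementary inequality $(a+b)^{p-1}-a^{p-1}-b^{p-1}\le C_p\max(a,b)^{p-2}\min(a,b)$ (valid for $p>2$) with $a_k=U_{x^k}(x)$ and $a_1=\max_k a_k$, I obtain
\[
0\le U_r^{p-1}(x)-\sum_{k=1}^\ell U_{x^k}^{p-1}(x)\le C_p\,U_{x^1}^{p-2}(x)\sum_{k\ge 2}U_{x^k}(x).
\]
Combined with the sharp decay $U_{x^k}(x)\le C(1+|x-x^k|)^{-(N-1)/2}e^{-|x-x^k|}$, this reduces the task to a scalar estimate of $e^{-(p-2)|x-x^1|}\sum_{k\ge 2}e^{-|x-x^k|}$.

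The geometric spacing $|x^1-x^k|=2r\sin((k-1)\pi/\ell)\ge m\ln(\ell/2)$ for $k=2,\ell$ (from the choice $r\in\mathbb D_1$, with $|x^1-x^k|$ growing in $|k-1|$) yields $\sum_{k\ge 2}e^{-|x^1-x^k|}\le C\ell^{-m}$, the sum being dominated by the nearest-neighbor terms. I then split $\Omega_1$ into an inner region $\{|x-x^1|\le A\ln\ell\}$ and an outer region $\{|x-x^1|>A\ln\ell\}$, where $A$ is a parameter to be tuned. In the inner region, the triangle inequality $|x-x^k|\ge|x^1-x^k|-|x-x^1|$ gives $\sum_{k\ge 2}e^{-|x-x^k|}\le Ce^{|x-x^1|}\ell^{-m}$, so the scalar quantity is bounded by $C\ell^{-m}e^{(3-p)|x-x^1|}$; this is dominated by the target $\ell^{-m/2-2\tau}e^{-(p-2)|x-x^1|/2}$ precisely when $A\le(m-4\tau)/(4-p)$ (which imposes no constraint when $p\ge 3$). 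In the outer region, using instead that $|x-x^k|\ge|x-x^1|$ on $\Omega_1$ gives the crude $\sum_{k\ge 2}e^{-|x-x^k|}\le\ell e^{-|x-x^1|}$, hence a bound $C\ell\,e^{-(p-1)|x-x^1|}$, which is controlled by $\ell^{-m/2-2\tau}\sum_j e^{-(p-2)|x-x^j|/2}$ once $|x-x^1|$ exceeds a matching threshold.

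The main obstacle is the compatibility of the inner and outer thresholds for $A$. A direct algebraic manipulation reduces this compatibility to $m(p-2)\ge 8\tau$, which is guaranteed by the hypothesis $\tau<\frac{1}{16}\min\{1,p-2\}$ together with $m>1$. When $p$ is close to $2$ the allowed window for $A$ becomes very tight, and in the intermediate regime one must replace the leading term $e^{-(p-2)|x-x^1|/2}$ of the right-hand side by the full sum $\sum_j e^{-(p-2)|x-x^j|/2}$ to absorb a factor of $\ell$; this is precisely why the lemma is stated with the full sum. The estimate for $V_\rho$ follows from the same argument after the substitutions $\ell\mapsto 2\ell$, $r\mapsto\rho$, $\Omega_j\mapsto\widetilde\Omega_j$, since the definition of $\mathbb D_2$ is calibrated so that $|y^1-y^2|=2\rho\sin(\pi/(2\ell))\ge m\ln(\ell/2)$, reproducing the same spacing structure.
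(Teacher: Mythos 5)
Your opening moves are sound: working in the cone $\Omega_1$, using the mean-value-type bound $U_r^{p-1}-\sum_k U_{x^k}^{p-1} = O\big(U_{x^1}^{p-2}\sum_{k\ge 2}U_{x^k}\big)$, and reducing to a scalar estimate on $e^{-(p-2)|x-x^1|}\sum_{k\ge 2}e^{-|x-x^k|}$ — all of this coincides with the paper's setup. The gap is in the two-region split that follows. Your inner/outer decomposition forces the threshold $A$ to satisfy simultaneously $A\ge\frac{2+m+4\tau}{p}$ (outer, where the crude bound $\sum_{k\ge2}e^{-|x-x^k|}\le\ell\,e^{-|x-x^1|}$ introduces the factor $\ell$) and $A\le\frac{m-4\tau}{4-p}$ (inner, when $p<4$). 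Cross-multiplying, the compatibility condition is
\begin{equation*}
(4-p)+8\tau \;\le\; m(p-2),
\end{equation*}
not $m(p-2)\ge 8\tau$ as you claim; you dropped the $4-p$ term in the "direct algebraic manipulation.'' The difference is fatal: take $p=2.5$, $m=1.5$, $\tau=0.01$ (all admissible since $\tau<\frac{1}{16}\min\{1,p-2\}=\frac{1}{32}$). Then $m(p-2)=0.75\ge 8\tau=0.08$ holds, but the true condition $1.58\le 0.75$ fails, so no choice of $A$ works and the window you describe is empty. Replacing $e^{-\frac{p-2}{2}|x-x^1|}$ on the right by the full sum $\sum_j e^{-\frac{p-2}{2}|x-x^j|}$ does not save you: in $\Omega_1$ the $j=1$ term dominates, so the full sum is comparable to the single term up to an irrelevant constant, and the extra factor of $\ell$ from the outer bound is not absorbed.

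The paper avoids this trap by never introducing the factor $\ell$. It splits $U_{x^1}^{p-2}=U_{x^1}^{\frac{p-2}{2}}\cdot U_{x^1}^{\frac{p-2}{2}}$, uses the second factor to produce the target exponential $e^{-\frac{p-2}{2}|x-x^1|}$, and shows that the remaining coefficient $U_{x^1}^{\frac{p-2}{2}}U_{x^j}$ is $O(\ell^{-\frac{m}{2}-2\tau})$ uniformly in $x\in\Omega_1$ and $j\ge 2$. The key is a lower bound on the combined exponent $\frac{p-2}{2}|x-x^1|+|x-x^j|$ that depends only on the geometry of the vertices, not on $x$: for $2<p\le4$ one writes $\frac{p-2}{2}|x-x^1|+|x-x^j|\ge\frac{p-2}{2}|x^1-x^j|+\frac{4-p}{2}|x-x^j|\ge(p-2)r\sin\frac{(j-1)\pi}{\ell}+\frac{4-p}{2}r\sin\frac{(2j-3)\pi}{\ell}$, and for $p\ge 4$ simply $\ge|x^1-x^j|$. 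For the nearest neighbors $j=2,\ell$ this gives $\ell^{-\min\{p/4,1\}\,m}$, and $\min\{p/4,1\}\,m\ge\frac{m}{2}+2\tau$ reduces exactly to $m(p-2)\ge 8\tau$ (for $p<4$), which is where your condition belongs and where it is indeed guaranteed by $m>1$ and $\tau<\frac{p-2}{16}$. The sum over $j\ge 2$ then converges geometrically. You should replace your region splitting by this uniform-in-$x$ bound on the exponent.
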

\begin{proof}
	In $\Omega_1$, we have 
\begin{align*}
	U_r^{p-1}-\sum_{j=1}^\ell  U_{x^j}^{p-1}= O(U_{x^1}^{p-2}\sum_{j=2}^\ell U_{x^j})=O(U_{x^1}^{\frac{p-2}2}\sum_{j=2}^\ell U_{x^j}) e^{-\frac{p-2}{2}|x-x_1|}.
\end{align*}
If $2<p\leq 4$, then for $j\geq 2$,
\begin{align*}
\frac{p-2}{2}|x-x^1|+|x-x^j|\geq& \frac{p-2}{2}|x^1-x^j| + \frac{4-p}{2}|x-x^j|\\
\geq&(p-2)r\sin\frac{\pi(j-1)}{\ell}+\frac{4-p}{2} r\sin\frac{\pi (2j-3)}{\ell}.
\end{align*}
If $ p\geq 4$, then for $j\geq 2$,
\begin{align*}
	\frac{p-2}{2}|x-x^1|+|x-x^j|\geq& |x^1-x^j| = 2r\sin\frac{\pi j}{\ell}.
	\end{align*}
	Hence, 
	\begin{align*}
		&U_{x^1}^\frac{p-2}2  U_{x^j}=O(\ell^{-\min\{\frac{p}{4},1\}m})=O(\ell^{-\frac{m}{2}-2\tau}), j=2,3, \ell-1,\ell\\
		&U_{x^1}^\frac{p-2}2  U_{x^j}=O(\ell^{-4m})=O(\ell^{-\frac{m}{2}-8\tau-1}),4\leq j \leq  \ell-2.
	\end{align*}
	Subsequently, we derive the conclusion for $U_r$, and analogously, for 
	 $V_\rho$.
\end{proof}

\section{ Tail minimization operator}
\label{sec3}

In this section, we further modify the non-linearities to establish and solve a minimization problem associated with the outer regions.

\subsection{Problem setting}
Set 
\[
P_k= \bigcup_{i=1}^\ell B_{k\ln\ln\ell}(x^i),\quad Q_k=\bigcup_{j=1}^{2\ell}B_{k\ln\ln\ell}(y^j),\quad k=1,2,3,4.	
\]
Fix \begin{equation}\label{tau}
    \tau \in \left(0,\min\left\{\frac1{16},\frac{p-2}{16},    \frac{\sigma}{16}\right\}\right).
\end{equation} 
For any $(\varphi,\psi)\in \mathbb E$,
we denote:
\[
\|(\varphi, \psi)\|_{ \infty }= \|\varphi \|_{L^\infty(\R^N)} + \|\psi \|_{L^\infty(\R^N)}.
\]

We introduce  
\[
\Lambda_\ell =\Set{(\varphi,\psi) \in \mathbb E \cap (L^\infty(\R^N))^2   | \ell^{-\frac12}\|(\varphi, \psi)\|+ \|(\varphi, \psi)\|_{ \infty } \leq \ell^{-\frac{m}{2}-\tau}}.
\]

Note that   each $(u,v)\in (U_r, V_\rho)+\Lambda_\ell$ satisfies
\eqref{alpha0}.
\medskip
For 
$(\varphi_0, \psi_0)\in  \Lambda_\ell$,
we consider the following   system
   \begin{equation}\label{eqouter}
	\left\{\begin{aligned}&-\Delta u+K_1(|x|)u=\mu |u|^{p-2}u+\beta \partial_1G_n(u, v),\quad &&\mbox{ in }\mathbb{R}^N\setminus P_1,
	\\
	&-\Delta v+ K_2(|x|)v=\nu |v|^{p-2}v+\beta \partial_2G_n(u, v),\quad &&\mbox{ in }\mathbb{R}^N\setminus Q_1,
\end{aligned}\right.
\end{equation}
with 
\begin{equation}\label{S0}
u=\varphi_0+U_r \mbox{ in } P_1,\quad  v= \psi_0 +V_\rho \mbox{ in } Q_1.
\end{equation}
 
We consider  \eqref{eqouter} satisfying
\begin{equation}\label{s0'}
	\|u\|_{L^\infty(\R^N\setminus P_1)}\leq (\ln\ell)^{-\frac12},\quad  \|v\|_{L^\infty(\R^N\setminus Q_1)} \leq (\ln\ell)^{-\frac12}.
\end{equation}
We will show that for each $(\varphi_0,\psi_0)\in  \Lambda_\ell$, problem \eqref{eqouter} with \eqref{S0} and \eqref{s0'} has a unique solution
$(u,v)$. We define the  set  
 \[
\mathbb S(\varphi_0,\psi_0):= \Set{(u,v)\in \mathbb E |   (u, v) \text{ satisfies } \eqref{S0} \text{ and } \eqref{s0'}},
 \]
 which is closed and convex in $\mathbb E$.

\subsection{A priori decay estimates}
In what follows, we assume $(\varphi_0,\psi_0)\in  \Lambda_\ell$.
If  $(u,v)\in \mathbb S(\varphi_0,\psi_0)$ satisfies \eqref{eqouter}, then
\begin{equation}\label{subsolution}
-\Delta|u|+\frac12|u|\leq 0 \mbox{ in }\R^N\setminus P_1, \quad 	-\Delta|v|+\frac12|v|\leq 0 \mbox{ in }\R^N\setminus Q_1.
\end{equation}
\begin{lem}\label{lemma3.3} 
  If $ (u,v)\in \mathbb E$ satisfies \eqref{S0} and
	\eqref{subsolution}, then
	 there is some $C>0$ independent of   $n, \ell$, and the choice of $(\varphi_0,\psi_0)$ such that
	\begin{equation}\label{decay} 
	|u(x)|\leq C\sum_{j=1}^\ell e^{- \frac1{\sqrt{2}}  |x-x^j|}, \quad 
	|v(x)|\leq C\sum_{j=1}^{2\ell} e^{-  \frac{1}{\sqrt{2}}|x-y^j|}, \quad x\in \R^N.
	 \end{equation}
\end{lem}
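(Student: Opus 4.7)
My plan is to compare $|u|$ and $|v|$ with explicit exponential barriers via the maximum principle for the coercive operator $-\Delta+\frac{1}{2}$ on the exterior domains $\R^N\setminus P_1$ and $\R^N\setminus Q_1$, and to handle the interior regions $P_1, Q_1$ directly from the Dirichlet data \eqref{S0}. The decay rate $1/\sqrt{2}$ (rather than the ``natural'' rate $1$ of $U$) is what the linear coercive part $-\Delta+\frac{1}{2}$ affords; any rate $c\in(0,1)$ would work for the barrier, and $1/\sqrt{2}$ is a convenient choice.

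\textbf{Barrier construction.} Set $\phi(x)=K\sum_{j=1}^\ell e^{-|x-x^j|/\sqrt{2}}$ with $K>0$ to be chosen uniformly in $n$, $\ell$ and $(\varphi_0,\psi_0)$. A direct computation on each radial summand gives
\[
-\Delta e^{-|x-x^j|/\sqrt{2}}+\tfrac{1}{2}e^{-|x-x^j|/\sqrt{2}}=\tfrac{N-1}{\sqrt{2}\,|x-x^j|}\,e^{-|x-x^j|/\sqrt{2}}\geq 0
\]
on $\R^N\setminus\{x^j\}$, so $\phi$ is a classical supersolution of $-\Delta+\tfrac{1}{2}$ on $\R^N\setminus P_1$.

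\textbf{Boundary comparison and decay at infinity.} For $x\in\partial P_1$, the distance to the nearest vertex $x^i$ is $\ln\ln\ell$, so $\phi(x)\geq K(\ln\ell)^{-1/\sqrt{2}}$. On the other hand, the sharp exponential decay of $W$ yields $|U_r(x)|=O((\ln\ell)^{-1})$ on $\partial P_1$, and $\|\varphi_0\|_\infty\leq\ell^{-m/2-\tau}$, so \eqref{S0} gives $|u(x)|\leq C_1(\ln\ell)^{-1}$ on $\partial P_1$. Since $(\ln\ell)^{-1/\sqrt{2}}\gg(\ln\ell)^{-1}$, a sufficiently large $K$ depending only on $C_1$ forces $|u|\leq\phi$ on $\partial P_1$. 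Moreover, $|u|\in H^1(\R^N)\cap L^\infty(\R^N\setminus P_1)$ is a weak subsolution of the coercive operator $-\Delta+\tfrac{1}{2}$, so standard local Moser iteration yields $|u(x)|^2\leq C\int_{B_1(x)}|u|^2$, whence $|u(x)|\to 0$ as $|x|\to\infty$; clearly $\phi(x)\to 0$ as well.

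\textbf{Maximum principle and interior bound.} Let $w=|u|-\phi$ on $\R^N\setminus P_1$. Then $-\Delta w+\tfrac{1}{2}w\leq 0$ weakly, $w\leq 0$ on $\partial P_1$, and $w\to 0$ at infinity, which implies $w^+\in H^1_0(\R^N\setminus P_1)$. Testing the inequality against $w^+$ gives $\int(|\nabla w^+|^2+\tfrac{1}{2}(w^+)^2)\leq 0$, so $w\leq 0$ outside $P_1$. For $x\in P_1$, \eqref{S0} combined with the pointwise estimate $|U_r(x)|\leq C\sum_j e^{-|x-x^j|}$ yields $|u(x)|\leq \ell^{-m/2-\tau}+C\sum_j e^{-|x-x^j|/\sqrt{2}}$, and since the term corresponding to the nearest vertex is at least $(\ln\ell)^{-1/\sqrt{2}}\gg\ell^{-m/2-\tau}$ for large $\ell$, we absorb the constant contribution to obtain $|u(x)|\leq C'\sum_j e^{-|x-x^j|/\sqrt{2}}$ on all of $\R^N$. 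The completely analogous argument on $\R^N\setminus Q_1$ and $Q_1$, using $\phi'(x)=K\sum_{j=1}^{2\ell}e^{-|x-y^j|/\sqrt{2}}$, gives the second estimate.

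\textbf{Main obstacle.} The delicate point is ensuring the constant $C$ is independent of $n$, $\ell$ and $(\varphi_0,\psi_0)\in\Lambda_\ell$. This is guaranteed because the barrier constant $K$, the interior bound, and the Moser constant depend only on (i) the $L^\infty$ bound on $(\varphi_0,\psi_0)$ encoded in $\Lambda_\ell$, (ii) the quantitative exponential decay of $W$, and (iii) the fixed coercivity constant of $-\Delta+\tfrac{1}{2}$; none of these involve the regularization index $n$ or the specific choice of $(\varphi_0,\psi_0)$, and the subsolution inequality \eqref{subsolution} itself is $n$-free since it follows purely from $\beta\leq 0$ and the sign property $s_i\partial_iG_n\geq 0$.
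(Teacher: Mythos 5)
Your argument is correct and follows essentially the same route as the paper: one verifies that $e^{-|x-x^j|/\sqrt{2}}$ is a supersolution of $-\Delta+\tfrac12$, bounds $|u|$ in $P_1$ directly from the Dirichlet data \eqref{S0} and the smallness of $\varphi_0$, and then invokes the comparison principle on $\R^N\setminus P_1$. The paper's proof is slightly more economical — it observes in one line that $|u|\le U_r+\ell^{-m/2}\le 2U_r\le C\sum_j w_j$ in all of $P_1$ (since $U_r\ge\ell^{-m/2}$ there), which simultaneously supplies the boundary data for the comparison, whereas you separately derive a boundary estimate of order $(\ln\ell)^{-1}$ versus the barrier's $(\ln\ell)^{-1/\sqrt 2}$; both work. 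The Moser-iteration step you include to justify decay at infinity is actually dispensable: $(|u|-\phi)^+$ already lies in $H^1_0(\R^N\setminus P_1)$ because it belongs to $H^1$ and has zero trace on $\partial P_1$, so the energy test you use afterwards applies directly.
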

\begin{proof}
 
We note that the function
$w_j(x):= e^{-\frac1{\sqrt{2}}|x-x^j|}$
satisfies
\[
-\Delta w_j +	\frac12 w_j \geq 0.
\]
Moreover, for $x\in P_1$, 
\[
|u(x)|=|\varphi_0(x)+U_r(x)|\leq U_r(x)+\ell^{-\frac{m}{2}}\leq 2U_r(x) \leq  C \sum_{j=1}^\ell w_j(x).
\]
Then by comparison principle we  obtain 
\[
|u(x)|\leq C	\sum_{j=1}^\ell w_j(x), \quad x\in \R^N\setminus P_1.
\]
Finally, we get the estimate for $u$. 
The estimate for $v$ follows similarly by analyzing $y^j$-centered exponentials in $\mathbb{R}^N \setminus Q_1$.
\end{proof} 
Next, we modify the estimate the deference between $ (u,v)$ and $(U_r,V_\rho)$.
\begin{lem}\label{lemma3.5}
	Assume $(u,v)\in\mathbb{S}(\varphi_0,\psi_0)$ satisfies \eqref{eqouter}.
	There  is $C>0$   such that
	\begin{align*}
		|u(x)-U_r(x)|&\leq 	C \ell^{-\frac{m}{2}-\tau}(\ln\ell)^{\tau}  \sum_{j=1}^\ell e^{-\tau |x-x^j|},\quad x\in \R^N,\\
		|v(x)-V_\rho(x)| &\leq C\ell^{-\frac{m}{2}-\tau}(\ln\ell)^{\tau} \sum_{j=1}^{2\ell}  e^{-\tau|x-y^j|},\quad x\in \R^N.
	\end{align*}
	
 	Moreover, 
 \[
 \|u(x)-U_r(x)\|_{H^1(\R^N\setminus P_3)} +\|v(x)-V_\rho(x)\|_{H^1(\R^N\setminus Q_3)}\leq  \ell^{-\frac{m-1}{2}-\tau}(\ln\ell)^{-\frac32\tau}.
 \]
\end{lem}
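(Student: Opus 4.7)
The plan is to derive the pointwise estimates via a comparison (barrier) argument applied to the difference equation for $w_1:=u-U_r$ (and analogously $w_2:=v-V_\rho$), and then to convert the pointwise bounds to $H^1$ estimates by exploiting the fact that each bump $e^{-\tau|x-x^j|}$ is at most $(\ln\ell)^{-3\tau}$ on $\R^N\setminus P_3$. Subtracting $-\Delta U_r+U_r=\mu\sum_j U_{x^j}^{p-1}$ from the equation for $u$ and linearizing $|u|^{p-2}u-U_r^{p-1}=(p-1)|\xi|^{p-2}w_1$ for some $\xi$ between $u$ and $U_r$, one obtains in $\R^N\setminus P_1$
\begin{equation*}
\mathcal{L}w_1:=-\Delta w_1+\bigl[K_1(|x|)-\mu(p-1)|\xi|^{p-2}\bigr]w_1=g_1,
\end{equation*}
where $g_1:=-(K_1-1)U_r+\mu\bigl(U_r^{p-1}-\sum_jU_{x^j}^{p-1}\bigr)+\beta\partial_1 G_n(u,v)$. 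By \eqref{s0'} and Lemma \ref{lemma3.3}, $|\xi|\leq|u|+U_r\leq C(\ln\ell)^{-1/2}$ on $\R^N\setminus P_1$, so the potential in $\mathcal{L}$ is at least $\tfrac12$ once $\ell$ is large.

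Each term of $g_1$ is estimated separately. Assumption (K) combined with the concentration of $U_r$ near $|x|\sim\ell\ln\ell$ gives $|(K_1-1)U_r|\leq C(\ell\ln\ell)^{-m}\sum_je^{-|x-x^j|}$; Lemma \ref{Appendix1} yields $|U_r^{p-1}-\sum_jU_{x^j}^{p-1}|\leq C\ell^{-m/2-2\tau}\sum_je^{-(p-2)|x-x^j|/2}$; and since $\partial_1G_n(s_1,0)\equiv 0$, (Gn3) gives $|\partial_1G_n(u,v)|\leq C|u|^\sigma|v|^{\sigma+1}$, which is super-polynomially small because the nearest $V$-peak $y^1$ lies at distance $\rho-r\sim\tfrac{m}{2\pi}\ell\ln\ell$ from the $U$-peak $x^1$. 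Summing the three contributions,
\begin{equation*}
|g_1(x)|\leq C\ell^{-m/2-2\tau}\sum_{j=1}^\ell e^{-(p-2)|x-x^j|/2},\quad x\in\R^N\setminus P_1.
\end{equation*}

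The barrier is chosen as $\bar w(x):=C_0\ell^{-m/2-\tau}(\ln\ell)^\tau\sum_je^{-\tau|x-x^j|}$. Since $-\Delta e^{-\tau|x-x^j|}+\tfrac14 e^{-\tau|x-x^j|}\geq 0$ away from $x^j$ (for $\tau<\tfrac14$), one verifies $\mathcal{L}\bar w\geq\tfrac14\bar w\geq|g_1|$ in $\R^N\setminus P_1$ for $C_0$ large enough, using $\tau<(p-2)/16$ from \eqref{tau} to make the exponential in $|g_1|$ decay faster than that in $\bar w$. On $\partial P_1$ each point sits at distance $\ln\ln\ell$ from the nearest $x^j$, so $\bar w\geq C_0\ell^{-m/2-\tau}\geq|\varphi_0|=|w_1|$, and $\bar w\to 0$ at infinity. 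The maximum principle applied to $\bar w\pm w_1$ then yields $|w_1|\leq\bar w$ in $\R^N\setminus P_1$; inside $P_1$ the inequality $|w_1|=|\varphi_0|\leq\ell^{-m/2-\tau}\leq\bar w$ is immediate. The estimate for $w_2$ is obtained in parallel, with $x^j,U_r,K_1$ replaced by $y^j,V_\rho,K_2$.

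For the $H^1$ part, square the pointwise bound and integrate over $\R^N\setminus P_3$: there $e^{-\tau|x-x^j|}\leq(\ln\ell)^{-3\tau}$ for every $j$, the near-orthogonality $\int_{\R^N}e^{-\tau|x-x^j|}e^{-\tau|x-x^k|}\leq Ce^{-\tau|x^j-x^k|/2}$ makes cross terms negligible, and the integral reduces to $\ell\int_{\R^N\setminus B_{3\ln\ln\ell}(0)}e^{-2\tau|y|}\,dy\leq C\ell(\ln\ell)^{-6\tau}(\ln\ln\ell)^{N-1}$. Absorbing the $(\ln\ln\ell)^{(N-1)/2}$ factor into a spare $(\ln\ell)^{\tau/2}$ yields $\|w_1\|_{L^2(\R^N\setminus P_3)}\leq\ell^{-(m-1)/2-\tau}(\ln\ell)^{-3\tau/2}$. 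The gradient bound follows by testing the equation for $w_1$ against $w_1\eta^2$ with $\eta\equiv 1$ on $\R^N\setminus P_3$ supported in $\R^N\setminus P_2$; integration by parts reduces $\|\nabla w_1\|_{L^2(\R^N\setminus P_3)}$ to the already-controlled $\|w_1\|_{L^2(\R^N\setminus P_2)}$ and $\|g_1\|_{L^2(\R^N\setminus P_2)}$. The main obstacle is the dual role of $\bar w$: the $(\ln\ell)^\tau$ prefactor is forced by the boundary comparison on $\partial P_1$ (where $e^{-\tau|x-x^j|}$ costs $(\ln\ell)^{-\tau}$), while the decay rate $\tau$ must stay below $(p-2)/2$ to dominate $g_1$ term by term, and the choice \eqref{tau} is calibrated to satisfy both demands simultaneously.
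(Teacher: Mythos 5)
Your proof is correct and takes essentially the same approach as the paper's. The paper arrives at the same differential inequality by applying Kato's inequality to the difference equation, writing $|u|^{p-2}u - U_r^{p-1} = O(|u|^{p-2}+U_r^{p-2})\varphi$ and absorbing the $O((\ln\ell)^{-(p-2)/2})\varphi$ term into the zeroth-order part, whereas you fold the linearized piece into a Schr\"odinger operator $\mathcal{L}$ with potential $K_1 - \mu(p-1)|\xi|^{p-2} \geq 1/2$ and apply the maximum principle to $\bar w \pm w_1$; these are interchangeable. For the $H^1$ part you split into $L^2$-squaring-plus-Caccioppoli while the paper uses a single cutoff energy estimate testing the differential inequality against $\eta^2|\varphi|$ — again equivalent, both ultimately controlled by the pointwise bound on the transition shell.

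Two small points worth flagging. First, your intermediate bound $|g_1| \leq C\ell^{-m/2-2\tau}\sum_j e^{-(p-2)|x-x^j|/2}$ is not literally correct when $p>4$: the term $(K_1-1)U_r$ decays only like $e^{-|x-x^j|}$, which is slower than $e^{-(p-2)|x-x^j|/2}$ in that range. The correct unified rate is $e^{-\min\{1,(p-2)/2\}|x-x^j|}$, or simply $e^{-\tau|x-x^j|}$ as the paper writes, which is all the barrier argument actually needs since $\tau < \min\{1/16,(p-2)/16\}$. Second, the third estimate in the lemma has no multiplicative constant; your "squaring" route produces an extra $C(\ln\ln\ell)^{(N-1)/2}$ that must be absorbed into spare logarithms, and the gradient piece also adds to the total, so one should keep an extra factor of $(\ln\ell)^{-\tau/2}$ in reserve (as you indicate) to make the final bound come out clean without a constant. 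Neither issue is structural; both are bookkeeping.
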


\begin{proof}
	Let $(\varphi,\psi)=(u,v)-(U_r,V_\rho)$.
By  \eqref{eqouter} and \eqref{S0}, we have 
\begin{equation}
	\label{outer''}
	\left\{\begin{aligned}&-\Delta \varphi+K_1\varphi=(1-K_1)U_r +\mu \left(|u|^{p-2}u-\sum_{j=1}^\ell  U_{x^j}^{p-1}\right)+\beta \partial_1G_n(u,v),  &&\mbox{ in }\mathbb{R}^N\setminus P_1,
	\\
	&-\Delta \psi+ K_2\psi=(1-K_2)V_\rho+\nu \left(|v|^{p-2}v-\sum_{j=1}^{2\ell} V_{y^j}^{p-1}\right)+\beta \partial_2G_n(u,v),  &&\mbox{ in }\mathbb{R}^N\setminus Q_1,
\\
&\varphi=\varphi_0 \mbox{ in } P_1,\quad  \psi= \psi_0 \mbox{ in } Q_1.
\end{aligned}\right.
\end{equation}

Noting that $|x-x^i|+ |x-y^j|\geq |x^1-y^1|=O(\ell \ln\ell)$, by (Gn3) and \eqref{decay}, 
we get for some $C>0$,
\begin{equation}\label{eqguv}
	\begin{aligned}
	|\partial_1 G_n(u,v)| \leq \alpha_0 |\partial_{12}G_n(\theta u, v)|
	\leq &
	C  (\sum_{i=1}^\ell\sum_{j=1}^{2\ell} e^{-\frac{1}{\sqrt{2}} (|x-x^i|+ |x-y^j|)})^\sigma\\
	\leq & 
	C  e^{-\ell}\sum_{i=1}^\ell\sum_{j=1}^{2\ell} e^{-\frac{\sigma}{2} (|x-x^i|+ |x-y^j|)},
\end{aligned}
\end{equation}
where   $0<\theta<1$.
By Lemma \ref{Appendix1},
\begin{align*}
	  |u|^{p-2}u-\sum_{j=1}^\ell  U_{x^j}^{p-1}& =|u|^{p-2}u -|U_r|^{p-2}U_r+ U_r^{p-1}-\sum_{j=1}^\ell  U_{x^j}^{p-1}\\
	  &= O(|u|^{p-2}+U_r^{p-2})\varphi +O( \ell^{- \frac{m}{2}-2\tau}) \sum_{j=1}^\ell e^{-\frac{p-2}{2} |x-x^j|}\\
	  &=O((\ln\ell)^{-\frac{p-2}{2}})\varphi+ O( \ell^{- \frac{m}{2}-2\tau}) \sum_{j=1}^\ell e^{-\frac{p-2}{2} |x-x^j|} ,
\end{align*}
and 
\begin{equation}\label{Ku}
|(1-K_1) U_r|= O(\ell^{-m}) \sum_{j=1}^\ell e^{-\frac12|x-x^j|}.	
\end{equation}

Combining the above observation, 
by the first equation in \eqref{outer''}, we find that
$\varphi$ satisfies
\begin{equation}\label{weq}
-\Delta|\varphi|+\frac{1}{2}|\varphi|\leq C \ell^{-\frac{m}{2}-2\tau}\sum_{j=1}^\ell e^{-\tau |x-x^j|}, x\in \R^N\setminus P_1.	
\end{equation}
The comparison principle and  the fact that 
\[
|\varphi(x)|< C \ell^{-\frac{m}{2}-\tau}(\ln\ell)^\tau \sum_{j=1}^\ell e^{-\tau |x-x^j|}, \quad x\in P_1
\]
 imply that 
\[
|\varphi(x)|\leq 	 C \ell^{-\frac{m}{2}-\tau}(\ln\ell)^\tau \sum_{j=1}^\ell e^{-\tau |x-x^j|}, x\in \R^N\setminus P_1.
\] 
The estimate of $\psi$ can be proved by similar arguments.
Finally, let $\eta\in C^1(\R^N)$ be such that 
 $0\leq \eta\leq 1$, $|\nabla \eta|\leq 2$ in $\R^N$, and 
 $\eta=1$ in $\R^N\setminus P_3$, $\eta=0$ in $ \cup_{j=1}^\ell B_{3\ln\ln\ell-1}(x^j)$.
 By 
 \eqref{weq} and the decay estimates, we can obtain 
\begin{equation}\label{eq:weq2}
	 \begin{aligned}
	 	\int_{\R^N\setminus P_3}|\nabla \varphi|^2 + \frac12 \varphi^2\leq &
	\int_{\R^N}|\nabla(\eta \varphi)|^2 + \frac12 (\eta \varphi)^2 
	\\
	 \leq 	
	 &\int_{\R^N}|\nabla \eta|^2 \varphi^2 + C\int  \eta^2|\varphi| \ell^{-\frac{m}{2}-2\tau}\sum_{j=1}^\ell e^{-\tau |x-x^j|}\\
	 \leq & \frac18 \ell^{-m-2\tau+1} (\ln\ell)^{-3\tau}.
	 \end{aligned}
\end{equation}
 This completes the proof.
\end{proof}
Let $(\bar \varphi_0, \bar \psi_0)\in   \Lambda_\ell$ be another point,
and  $(\bar u, \bar v)\in  \mathbb{S}(\bar \varphi_0,\bar \psi_0)$ satisfies \eqref{eqouter}.
	We give   estimate for $(  u,   v)-(\bar u, \bar v)$ expressed in   terms of $(\varphi_0,\psi_0)-(\bar \varphi_0,\bar \psi_0)$.
\begin{lem}\label{lemmawz}
	There is $C >0$
such that  
\begin{align*}
	|w(x)|\leq& C(\ln\ell)^{\tau}\left(\|w_0\|_{L^\infty( P_1)}+  \|z_0\|_{L^\infty(Q_1 )}\right)	\sum_{j=1}^\ell e^{-\tau |x-x^j|},\quad  x\in \R^N\setminus P_1\\
	|z(x)|\leq&  C(\ln\ell)^{\tau}\left(\|w_0\|_{L^\infty( P_1)}+  \|z_0\|_{L^\infty(Q_1 )}\right) \sum_{j=1}^{2\ell}e^{-\tau|x-y^j|}, \quad x\in \R^N\setminus Q_1,
	\end{align*}
	where   $(w,z)=(  u,   v)-(\bar u, \bar v)$ and $(w_0,z_0)=(\varphi_0,\psi_0)-(\bar \varphi_0,\bar \psi_0)$. 
\end{lem}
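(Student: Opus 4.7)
The plan is to run the strategy of Lemma~\ref{lemma3.5} on the difference system rather than on a single solution. Subtracting the two instances of \eqref{eqouter} and setting $(w,z)=(u,v)-(\bar u,\bar v)$, $(w_0,z_0)=(\varphi_0,\psi_0)-(\bar\varphi_0,\bar\psi_0)$, one obtains
\[
-\Delta w+K_1 w=\mu\bigl(|u|^{p-2}u-|\bar u|^{p-2}\bar u\bigr)+\beta\bigl(\partial_1 G_n(u,v)-\partial_1 G_n(\bar u,\bar v)\bigr)\quad\text{in }\R^N\setminus P_1,
\]
with $w=w_0$ on $P_1$, and a symmetric equation for $z$ on $\R^N\setminus Q_1$. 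The mean value theorem rewrites the right-hand side as $a(x)w+b(x)z$, where $a$ collects the self-interaction and the $w$-slope of the coupling, and $b$ collects the $z$-slope of the coupling.

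Next I would control $a$ and $b$ using the a priori estimates already available. Because $(u,v),(\bar u,\bar v)\in\mathbb S$, the bound $\|u\|_{L^\infty(\R^N\setminus P_1)},\|\bar u\|_{L^\infty(\R^N\setminus P_1)}\leq(\ln\ell)^{-1/2}$ implies that the self-interaction portion of $a$ has size $O((\ln\ell)^{-(p-2)/2})$, which can be absorbed into $K_1\geq 1$. For the coupling portion, assumption (Gn3) together with the pointwise decay of Lemma~\ref{lemma3.3} yields
\[
|b(x)|\leq C\sum_{i,j}e^{-\frac{\sigma}{\sqrt2}(|x-x^i|+|x-y^j|)}\leq Ce^{-c\ell\ln\ell}\sum_{i,j}e^{-\frac{\sigma}{2}(|x-x^i|+|x-y^j|)},
\]
since $|x-x^i|+|x-y^j|\geq|x^i-y^j|=\Theta(\ell\ln\ell)$. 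Thus the cross-coupling coefficient is super-exponentially small.

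Armed with these bounds I would apply a comparison argument with the barrier
\[
\Psi(x)=C_\ast(\ln\ell)^\tau\bigl(\|w_0\|_{L^\infty(P_1)}+\|z_0\|_{L^\infty(Q_1)}\bigr)\sum_{j=1}^\ell e^{-\tau|x-x^j|},
\]
and the analogous $\Phi(x)$ with $y^j$ replacing $x^j$. For $C_\ast$ large and $\tau$ chosen as in \eqref{tau}, a direct computation shows that $-\Delta\Psi+\frac12\Psi$ strictly dominates the absorbed self-interaction source, and $\Psi\geq|w_0|$ on $\partial P_1$ because $e^{-\tau|x-x^j|}\geq(\ln\ell)^{-\tau}$ there. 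The comparison principle then yields $|w|\leq\Psi+o(1)\|z\|_{L^\infty(\R^N\setminus Q_1)}$ on $\R^N\setminus P_1$ and symmetrically for $z$, and the bootstrap closes in one step because the cross coefficient is super-exponentially small.

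The main obstacle is the sublinear singularity flagged in (Gn3): the linearization $\partial_{11}G_n(\xi,\eta)$ can diverge like $(|\xi|-1/n)^{\sigma-1}$ as $|\xi|\to(1/n)^+$. I would resolve this by splitting on the intermediate value $\xi$. Where $|\xi|>1/n$, the companion factor $|\eta|^{\sigma+1}$ from (Gn3) tames the singularity: in $\R^N\setminus P_1$ the intermediate $\eta$ is super-exponentially small away from $Q_1$, while near $Q_1$ the intermediate $\xi$ is itself so small that (for $n\geq n_\ell$) the regime $|\xi|>1/n$ is excluded. Where $|\xi|\leq 1/n$, the $C^2$-regularity of $G_n$ established via $(\phi_n'(t))^2=O(\phi_n(t))$ (the same computation used in deriving (Gn1)) gives a bound $|\partial_{11}G_n(\xi,\eta)|=O(\phi_n^\sigma(|\xi|))$ of the same order. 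In both cases the coupling source remains super-exponentially small pointwise, and the comparison argument goes through as sketched.
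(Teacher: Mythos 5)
Your overall structure (subtracting the two systems, comparison argument with the exponential barrier, closing the bootstrap via the super-exponential smallness of the cross-coefficient) matches the paper's proof and is sound. The gap lies in your handling of the ``$w$-slope'' of the coupling, i.e.\ the coefficient $\beta\,\partial_{11}G_n(\xi,\eta)$ arising from the mean value theorem applied to $\partial_1 G_n(u,v)-\partial_1 G_n(\bar u,v)$.

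You try to bound $|\partial_{11}G_n(\xi,\eta)|$ pointwise and conclude it is ``super-exponentially small''. That estimate is not correct as stated. Condition (Gn3) gives $|\partial_{11}G_n(s_1,s_2)|\le (|s_1|-1/n)^{\sigma-1}|s_2|^{\sigma+1}$ only on the region $1/n<|s_1|\le\alpha_0$, $|s_2|\le s_0$, and the factor $(|s_1|-1/n)^{\sigma-1}$ is unbounded as $|s_1|\to(1/n)^+$. The intermediate value $\xi$ between $u(x)$ and $\bar u(x)$ can lie arbitrarily close to $1/n$ somewhere in the transition region where $u$ decays past $1/n$; the companion factor $|\eta|^{\sigma+1}$ is fixed at the given point $x$ and cannot tame a singularity whose rate is not controlled in terms of $x$. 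Your case split ``$|\xi|\le 1/n$ versus $|\xi|>1/n$'' also does not help: for $|\xi|\le 1/n$ you only get $O(\phi_n^\sigma(|\xi|))$, a bound that depends on $n$ and does not yield a uniform super-exponential estimate. So the claim that the coupling source is super-exponentially small is not established, and the comparison argument as you set it up does not close.

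The correct, and much simpler, observation --- which is what the paper uses --- is that you never need a quantitative bound on $\partial_{11}G_n$ at all. By (Gn2), $\partial_1 G_n$ is increasing in its first argument, so $\bigl(\partial_1 G_n(u,v)-\partial_1 G_n(\bar u,v)\bigr)(u-\bar u)\ge 0$. Combined with $\beta<0$, this gives
\[
\beta\bigl(\partial_1 G_n(u,v)-\partial_1 G_n(\bar u,v)\bigr)\,w\ \le\ 0 .
\]
After applying Kato's inequality to the $w$-equation, this term has the favorable sign and is simply dropped; only the cross term $\partial_1 G_n(\bar u,v)-\partial_1 G_n(\bar u,\bar v)$ (which you do bound correctly, by $\partial_{12}G_n$ and (Gn3)) remains. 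This sign structure is exactly what makes the sublinear singularity harmless here, and it is the ingredient your proposal is missing. If you replace your third paragraph by this one-line sign argument, the rest of your proof goes through.
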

\begin{proof}
 By  \eqref{eqouter}, we have 
	\begin{equation}
		\label{outerdifference}
		\left\{\begin{aligned}-\Delta w+K_1w=& \mu \left(|u|^{p-2}u-|\bar u|^{p-2}\bar u\right)+\beta (\partial_1G_n(u,v)-\partial_1G_n(\bar u, \bar v))    \mbox{ in }\mathbb{R}^N\setminus P_1,
		\\
		 -\Delta z+ K_2z=&\nu \left(|v|^{p-2}v-|\bar v|^{p-2}\bar v\right)+\beta  (\partial_2G_n(u,v)-\partial_2G_n(\bar u, \bar v))     \mbox{ in }\mathbb{R}^N\setminus Q_1,
	\\
w=&w_0 \mbox{ in } P_1,\quad  z= z_0 \mbox{ in } Q_1.
	\end{aligned}\right.
	\end{equation}
In $\R^N\setminus P_1$, there hold
\[
	|u|^{p-2}u-|\bar u|^{p-2}\bar u=O(|u|^{p-2}+|\bar u|^{p-2})w=O((\ln\ell)^{-\frac{p-2}{2}})w,
\]
and by (Gn3)
\begin{align*}
	&\partial_1G_n(u,v)-\partial_1G_n(\bar u, \bar v)=\partial_1G_n(u,v)-\partial_1G_n(\bar u,   v)+\partial_1G_n(\bar u,  v)-\partial_1G_n(\bar u, \bar v),\\
		=&\partial_1G_n(u,v)-\partial_1G_n(\bar u,   v)+O(e^{-\ell}\sum_{i=1}^\ell\sum_{j=1}^{2\ell} e^{-\frac{\sigma}2 (|x-x^i|+ |x-y^j|)}) z.
\end{align*}
Then by Kato's inequality, and noting that $ \beta (\partial_1G_n(u,v)-\partial_1G_n(\bar u,   v)) w\leq 0$,
we have 
\[
	-\Delta|w|+\frac12 |w|\leq Ce^{-\ell}\sum_{j=1}^\ell e^{-\frac{\sigma}{2}   |x-x^j|} |z|, \mbox{ in } \R^N\setminus P_1.
\]
Similarly,
\[ 
	-\Delta|z|+\frac12 |z|\leq Ce^{-\ell}\sum_{j=1}^{2\ell}e^{-\frac{\sigma}{2}|x-y^j|} |w|, \mbox{ in } \R^N\setminus Q_1.
\]
Through the application of the comparison principle, we derive
\begin{equation}\label{eq3.8}
	\begin{aligned}
|w(x)|\leq C((\ln\ell)^{\tau}\|w_0\|_{L^\infty(  P_1)}+ e^{-\ell}  \|z\|_{L^\infty(\R^N )})	\sum_{j=1}^\ell e^{-\tau |x-x^j|}, x\in \R^N\setminus P_1,\\
|z(x)|\leq  C((\ln\ell)^{\tau}\|z_0\|_{L^\infty(  Q_1)}+ e^{-\ell}  \|w\|_{L^\infty(\R^N )})\sum_{j=1}^{2\ell} e^{-\tau|x-y^j|}, x\in \R^N\setminus Q_1,
\end{aligned}
\end{equation}
which lead to
\begin{align*}
\|w\|_{L^\infty(\R^N\setminus P_1)}\leq& C(\|w_0\|_{L^\infty(  P_1)} + e^{-\ell}  \|z\|_{L^\infty(\R^N )})\\
=&C(\|w_0\|_{L^\infty(  P_1)} + e^{-\ell} \|z_0\|_{L^\infty(Q_1)}+ e^{-\ell} \|z\|_{L^\infty(\R^N\setminus Q_1)}),\\
\|z\|_{L^\infty(\R^N\setminus Q_1)}\leq &C(\|z_0\|_{L^\infty( Q_1)} + e^{-\ell} \|w_0\|_{L^\infty(P_1)} + e^{-\ell} \|w\|_{L^\infty(\R^N\setminus P_1)}).	
\end{align*}
Upon combining the estimates, we obtain
\[
	\| w\|_{L^\infty(\R^N\setminus P_1)} + \| z\|_{L^\infty(\R^N\setminus Q_1)}\leq C  (  
	 \| w_0\|_{L^\infty( P_1)} + \| z_0\|_{L^\infty( Q_1)}).	
\]
By inserting this result into   \eqref{eq3.8}, we conclude our desired assertion on $|w(x)|$ and $ |z(x)|$.
\end{proof}

\subsection{Existence of solution to \eqref{eqouter}}\label{sec3.1}
It follows from Lemma \ref{lemmawz} that, for each $(\varphi_0, \psi_0) \in \Lambda_\ell$, the problem \eqref{eqouter} admits at most one solution in $\mathbb{S}(\varphi_0, \psi_0)$.  
We now turn to the proof of existence.  
To this end, we consider the minimization problem  
\begin{equation}\label{min}
\inf \left\{ I_n(u, v) \mid (u, v) \in \mathbb{S}(\varphi_0, \psi_0) \right\}.
\end{equation}
 \begin{lem}\label{lemma3.2}
	The problem \eqref{min} is attained by a unique solution $(u,v) $ of \eqref{eqouter} in $\mathbb{S}(\varphi_0, \psi_0)$.
\end{lem}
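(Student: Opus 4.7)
The plan is to apply the direct method of the calculus of variations: bound $I_n$ below and obtain coercivity on $\mathbb{S}(\varphi_0,\psi_0)$, extract a minimizer via weak compactness and lower semicontinuity, verify that the minimizer solves \eqref{eqouter} once we have shown that the $L^\infty$-constraint \eqref{s0'} is strictly inactive at the minimum, and invoke Lemma~\ref{lemmawz} for uniqueness.

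First I would derive a uniform lower bound. For any $(u,v)\in \mathbb{S}(\varphi_0,\psi_0)$ the components are prescribed on $P_1$ and $Q_1$ by \eqref{S0}, contributing fixed bounded amounts. On the exterior the bound $\|u\|_{L^\infty(\R^N\setminus P_1)}\leq (\ln\ell)^{-1/2}$ gives
\[
\mu\int_{\R^N\setminus P_1}|u|^p\leq \mu(\ln\ell)^{-(p-2)/2}\int_{\R^N\setminus P_1}u^2,
\]
which is absorbed by $\int K_1 u^2$ since $K_1\geq 1$ and $\ell$ is large; the analogue holds for $v$. By (Gn5) and $\beta<0$, $-\beta\int G_n(u,v)\geq 0$. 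Hence $I_n(u,v)\geq \tfrac14\|(u,v)\|^2-C(\varphi_0,\psi_0)$, which is both a lower bound and a source of coercivity. A minimizing sequence $\{(u_k,v_k)\}\subset\mathbb{S}(\varphi_0,\psi_0)$ is therefore bounded in $\mathbb{H}_s$; extracting a subsequence, $(u_k,v_k)\rightharpoonup (u^*,v^*)$ weakly in $\mathbb{H}_s$ and pointwise almost everywhere, with $(u^*,v^*)\in\mathbb{E}$ by closedness. The prescribed values on $P_1\cup Q_1$ and the pointwise bounds \eqref{s0'} pass to the a.e.\ limit, so $(u^*,v^*)\in\mathbb{S}(\varphi_0,\psi_0)$. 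Weak lower semicontinuity of the quadratic part is standard; the $L^p$-terms pass to the limit by dominated convergence (using $|u_k|\leq (\ln\ell)^{-1/2}$ on the exterior together with the uniform $L^2$-bound, and the fixed values on $P_1,Q_1$); and $-\beta\int G_n(u,v)$ is lower semicontinuous by Fatou via (Gn1), (Gn4), (Gn5). Thus $(u^*,v^*)$ attains the infimum.

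Next I would verify that $(u^*,v^*)$ solves \eqref{eqouter}. Admissible variations are pairs $(h,k)\in\mathbb{E}\cap (L^\infty)^2$ with $h\equiv 0$ on $P_1$ and $k\equiv 0$ on $Q_1$; the orthogonality conditions defining $\mathbb{E}$ are then automatic, since $\supp X_j\subset P_1$ and $\supp Y_j\subset Q_1$ for $\ell$ large. On the open set $\{|u^*|<(\ln\ell)^{-1/2}\}\cap(\R^N\setminus P_1)$ two-sided variations are allowed and the first-order condition yields the weak form of \eqref{eqouter}. The main obstacle is thus to rule out the constraint being active, i.e.\ to show $\|u^*\|_{L^\infty(\R^N\setminus P_1)}<(\ln\ell)^{-1/2}$ (and similarly for $v^*$). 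To this end I would combine the one-sided variational inequality with Kato's inequality and the sign condition $\beta\,\partial_1 G_n(u^*,v^*)u^*\leq 0$ (coming from $\beta<0$ together with $\partial_1 G_n(u,v)u\geq 0$ in (Gn5)), plus $\mu|u^*|^{p-2}\leq \tfrac12 K_1$ on the exterior, to derive $-\Delta|u^*|+\tfrac12|u^*|\leq 0$ distributionally on $\R^N\setminus P_1$. The comparison argument in the proof of Lemma~\ref{lemma3.3} then propagates the boundary estimate $|u^*|\leq 2U_r$ on $\partial P_1$ to yield $|u^*(x)|\leq C\sum_j e^{-|x-x^j|/\sqrt{2}}$, which on $\R^N\setminus P_1$ is of order $(\ln\ell)^{-1/\sqrt 2}$ and hence strictly less than $(\ln\ell)^{-1/2}$ for $\ell$ large. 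The same bound holds for $v^*$, so the constraint is strictly inactive and \eqref{eqouter} holds.

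Finally, uniqueness in $\mathbb{S}(\varphi_0,\psi_0)$ follows immediately from Lemma~\ref{lemmawz} applied with $(\bar\varphi_0,\bar\psi_0)=(\varphi_0,\psi_0)$: any two solutions share the prescribed data on $P_1\cup Q_1$, so $(w_0,z_0)=(0,0)$, which forces $w\equiv z\equiv 0$.
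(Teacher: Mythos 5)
Your overall architecture matches the paper's: direct method for \eqref{min}, derive a subsolution inequality at the minimizer to invoke Lemma~\ref{lemma3.3} and show \eqref{s0'} is strictly inactive (so the minimizer is an interior critical point), and get uniqueness from Lemma~\ref{lemmawz}. The coercivity estimate and the use of the comparison decay to upgrade $(\ln\ell)^{-1/2}$ to a strict inequality are also as in the paper.

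There is, however, a genuine gap in the lower semicontinuity step. You assert that the $L^p$-terms ``pass to the limit by dominated convergence'' using the exterior $L^\infty$ bound and the uniform $L^2$ bound. But $|u_k|\leq(\ln\ell)^{-1/2}$ on $\R^N\setminus P_1$ only gives the \emph{constant} dominating function $(\ln\ell)^{-p/2}$, which is not in $L^1(\R^N)$, and a uniform $L^2$ bound on $u_k$ is not a pointwise dominating function. In fact, for an arbitrary weakly convergent sequence in $\mathbb{S}(\varphi_0,\psi_0)$, $\int|u_k|^p$ can strictly overshoot $\int|u^*|^p$ (bumps wandering to infinity while respecting the $L^\infty$ cap), so the term $-\tfrac1p\int|u|^p$ is only weakly \emph{upper} semicontinuous, which is the wrong direction. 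The paper resolves this without chasing convergence of the $L^p$ integral: it splits $I_n=\bigl(I_n+\beta\int G_n\bigr)-\beta\int G_n$ and observes that $I_n+\beta\int G_n$ is \emph{convex} on $\mathbb{S}(\varphi_0,\psi_0)$, because variations vanish on $P_1\cup Q_1$ by \eqref{S0}, and on the exterior $(p-1)\mu|u|^{p-2}\leq(p-1)\mu(\ln\ell)^{-(p-2)/2}<K_1$ (and similarly for $v$), so the Hessian is positive. A convex, strongly lower semicontinuous functional on a convex set is weakly lower semicontinuous; together with Fatou for $-\beta\int G_n\geq 0$, this yields the required wlsc of $I_n$. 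You should replace the DCT claim by this convexity observation. (Once you have a minimizer, convexity also implies the minimizing sequence converges strongly, which retroactively recovers convergence of the $L^p$ integrals, but that cannot be the input.)

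A smaller remark on the subsolution step: you invoke Kato's inequality applied to a one-sided variational inequality for $u^*$ itself, which requires keeping track of the sign of $u^*$ and which variations are admissible at points where the constraint is saturated. The paper avoids this bookkeeping by first noting $I_n(|u|,|v|)=I_n(u,v)$, replacing the minimizer by $(|u^*|,|v^*|)$, and then taking only downward variations $(|u^*|-t\xi,|v^*|-t\eta)$ with $\xi,\eta\geq 0$; the resulting inequality is already the subsolution inequality without any sign gymnastics. Your route can be made to work, but the absolute-value trick is cleaner and is what you should use.
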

\begin{proof}
First note that $\mathbb{S}(\varphi_0,\psi_0)$ is weakly closed. To show that the problem 
\eqref{min} 
is attained, it suffices to prove that $I_n(u,v)$ is weakly lower semicontinuous and coercive on $\mathbb{S}(\varphi_0,\psi_0)$. 

To verify weak lower semicontinuity, observe that 
$$
I_n(u,v) = I_n(u,v) + \beta \int_{\mathbb{R}^N} G_n(u,v) - \beta \int_{\mathbb{R}^N} G_n(u,v).
$$
The term $I_n(u,v) + \beta \int_{\mathbb{R}^N} G_n(u,v)$ is convex on $\mathbb S(\varphi_0,\psi_0)$ and hence weakly lower semicontinuous. On the other hand, since $G_n \geq 0$, Fatou's Lemma implies that $\int_{\mathbb{R}^N} G_n(u,v)$ is weakly lower semicontinuous. 

To establish coercivity, we observe that for  
$
C_0 = \frac{1}{p} \int_{P_1} \mu |u_0|^p + \frac{1}{p} \int_{Q_1} \nu |v_0|^p,
$
it follows from \eqref{s0'} that  
\begin{align*}
I_n(u, v) 
&\geq \frac{1}{2} \|(u, v)\|^2 - \frac{1}{p} \int_{\mathbb{R}^N \setminus P_1} \mu |u|^p - \frac{1}{p} \int_{\mathbb{R}^N \setminus Q_1} \nu |v|^p - C_0 \\
&\geq \frac{1}{2} \|(u, v)\|^2 - C (\ln \ell)^{-\frac{p-2}{2}} \|(u, v)\|^2 - C_0.
\end{align*}
As $\|(u, v)\| \to \infty$, the right-hand side tends to $+\infty$, which confirms the coercivity of $I_n$.  
Therefore, the minimization problem \eqref{min} is attained by some $(u, v) \in \mathbb{S}(\varphi_0,\psi_0)$.

Next, we show that this minimizer $(u, v) \in \mathbb{S}(\varphi_0,\psi_0)$ is a solution to \eqref{eqouter}.  
Since $I_n(|u|, |v|) = I_n(u, v)$ and $(|u|, |v|) \in \mathbb{S}(\varphi_0,\psi_0)$, it follows that $(|u|, |v|)$ is also a minimizer.  
Now, for any non-negative test functions $\xi \in C_0^\infty(\mathbb{R}^N \setminus P_1)$ and $\eta \in C_0^\infty(\mathbb{R}^N \setminus Q_1)$ with $\xi, \eta \ge 0$, we have  
$$
(|u| - t\xi, |v| - t\eta) \in \mathbb{S}(\varphi_0,\psi_0)
$$  
for sufficiently small $t > 0$.  

Thus,
$$
\langle \nabla I_n(|u|, |v|), (\xi, \eta) \rangle 
= \lim_{t \to 0^+} \frac{I_n(|u|, |v|) - I_n(|u| - t\xi, |v| - t\eta)}{t} \leq 0.
$$

This implies that $(|u|, |v|)$ satisfies
$$
\left\{
\begin{aligned}
&-\Delta |u| + K_1(|x|)|u| \leq \mu |u|^{p-1} + \beta \partial_1 G_n(|u|, |v|), && \text{in } \mathbb{R}^N \setminus P_1, \\
&-\Delta |v| + K_2(|x|)|v| \leq  \nu |v|^{p-1} + \beta \partial_2 G_n(|u|, |v|), && \text{in } \mathbb{R}^N \setminus Q_1.
\end{aligned}
\right.
$$

By \eqref{s0'} and $\beta<0$, we conclude that $(u,v)\in \mathbb S(\varphi_0,\psi_0)$ satisfies \eqref{subsolution}.
Then by Lemma \ref{lemma3.3}, we know that 
the inequalities in \eqref{s0'} holds strictly for $(u,v)$. 
Therefore, $(u,v)\in \mathbb S(\varphi_0,\psi_0)$ is a solution to \eqref{eqouter}.
\end{proof}
We now define the tail minimization operator as follows:
\begin{defn}  Given any \( (\varphi_0,\psi_0)\in \Lambda_\ell \), let 
  $(u,v)\in \mathbb S(\varphi_0,\psi_0)$ denote the unique solution to \eqref{eqouter}.
The  tail minimization operator  $ S: \Lambda_\ell \to \mathbb{E} $ is then defined by  
	\[
	S(\varphi_0,\psi_0):=(u,v)-(U_r, V_\rho).
	\]
\end{defn}

\section{Reduction}

In this section, we will develop a reduced framework to construct the concentrated solution of the system of equations \eqref{eq0'}. As explained in the introduction, for problems with sublinear coupling nonlinear terms, the difficulty of directly applying perturbation methods for reduction is quite evident. Proving the existence of a fixed point by contraction mapping in $\mathbb{E}$ is problematic, and ultimately, using the implicit function theorem to prove the smoothness of the concentrated solution with respect to the parameters is also infeasible. To overcome these difficulties, we take advantage of the solvability in the outer region, where the sublinear terms present challenges, and instead impose the fixed point problem within the set $S\Lambda_\ell$.

Our purpose in this section is to find  $(\varphi, \psi)$ such that 
$\nabla_{\mathbb E}  J_n(\varphi,\psi)=0$ in $\mathbb E$.
Equivalently, we solve
\[L(\varphi,\psi)=
\gamma+ R(\varphi, \psi) +N_n(\varphi,\psi) \text{ in }\mathbb E,
\]
where $\gamma$, $R(\varphi, \psi)$ and $N_n(\varphi,\psi)$ are respectively  defined by
\begin{align*}
	\langle \gamma,(w,z)	\rangle=&\int_{\R^N} (\gamma_1,\gamma_2)\cdot(w,z),\\
\langle R(\varphi, \psi), (w,z)\rangle=	
&\int_{\R^N}  (R_{1}(\varphi),  R_{2}(\psi))\cdot(w,z),\\ 
\langle N_n(\varphi, \psi), (w,z)\rangle=&\beta\int_{\R^N} \nabla G_n(U_r+\varphi,V_\rho+\psi) \cdot(w,z)
\end{align*}
for any $(w,z)\in\mathbb E$, where
\begin{gather*}
	\gamma_1=\mu(U_r^{p-1}-\sum_{j=1}^\ell U_{x^j}^{p-1}+(1-K_1)U_r), \quad \gamma_2=\nu(V_\rho^{p-1} - \sum_{j=1}^{2\ell}V_{y^j}^{p-1}+(1-K_2)V_\rho),\\
	R_{1}(\varphi)= \mu(|U_r+\varphi|^{p-2}(U_r+\varphi) -U_r^{p-1}-(p-1) U_r^{p-2}\varphi),\\
	R_{2}(\psi)=\nu(|V_\rho+\psi|^{p-2}(V_\rho+\psi) -V_\rho^{p-1}-(p-1) V_\rho^{p-2}\psi).
\end{gather*}

We emphasize that directly addressing the fixed point problem on $\Lambda_\ell$ via the operator  
$$
(\varphi, \psi) = T(\varphi, \psi) := L^{-1}\left( \gamma + R(\varphi, \psi) + N_n(\varphi, \psi) \right)
$$  
is generally not feasible, since $T$ fails to be a contraction on $\Lambda_\ell$.  

However, observe that if $(\varphi, \psi) \in \Lambda_\ell$ is a fixed point of $T$, then $(u, v) = (\varphi, \psi) + (U_r, V_\rho)$ solves \eqref{eqouter}. By uniqueness, it follows that $(\varphi, \psi) = S(\varphi, \psi) \in S\Lambda_\ell$.  
This observation enables us to focus our attention on the fixed point problem within the set  $S\Lambda_\ell$.
 
 Since every element $ (\varphi, \psi) \in S\Lambda_\ell $ is uniquely determined by its restrictions $ \varphi|_{P_1} $ and $ \psi|_{Q_1} $, we define a metric on $S\Lambda_\ell$ by
\[
d((\varphi,\psi),(\varphi',\psi'))=\|\varphi-\varphi'\|_{P}+\|\psi-\psi'\|_{Q},
\]
where for $\xi\in H^1(P_1)\cap L^\infty(P_1)$, $\eta\in H^1(Q_1)\cap L^\infty(Q_1)$,
\[ \|\xi\|_{P}=
\ell^{-\frac12}\|\xi\|_{H^1(P_1)} +\|\xi\|_{L^\infty(P_1)},
\]
\[ \|\eta\|_{Q}=
\ell^{-\frac12}\| \eta\|_{H^1(Q_1)} +\| \eta\|_{L^\infty(Q_1)}.
\]
Equipped with this metric, $ (S\Lambda_\ell, d) $ becomes a complete metric space.  
This construction allows us to isometrically identify $ S\Lambda_\ell $ with the space  
\[\{
(\varphi|_{P_1}, \psi|_{Q_1})\ |\ (\varphi,\psi)\in \Lambda_\ell\}.
\]
Crucially, even though $S\Lambda_\ell$ may not be a subset of $\Lambda_\ell$, the composition $S \circ S$ is well-defined and satisfies $S \circ S = S$. In particular, if $(\varphi, \psi) \in S\Lambda_\ell$, then $S(\varphi, \psi) = (\varphi, \psi)$.
For each $(\varphi,\psi)\in S\Lambda_\ell$, 
 and $(\xi,\eta)\in H_0^1(\R^N\setminus P_1)\times H_0^1(\R^N\setminus Q_1) \cap \mathbb H_s$,
there hold 
\begin{equation}\label{eq 4.1}
	\begin{aligned}
	   \int_{\R^N}\left(\gamma_1+R_{1}(\varphi)+\beta\partial_1G_n(\varphi+U_r, \psi+V_\rho)  \right)\xi &=\langle L_r (\varphi), \xi \rangle,\\
	   \int_{\R^N}\left( \gamma_2+R_{2}(\psi)+\beta\partial_2G_n(\varphi+U_r, \psi+V_\rho) \right)\eta &=\langle L_\rho (\psi), \eta \rangle.
	\end{aligned}
\end{equation}
 For any $\zeta, \xi\in E_r$, $C^1$ truncation $\chi$, we have 
\begin{align}\label{zeta}
\langle L_r (\chi \zeta), \xi \rangle  
=\langle L_r \zeta, \chi\xi \rangle +\int_{\R^N} \zeta(2\nabla \chi\nabla\xi+  \Delta \chi \xi )
\end{align}
In general, $T$ does not map $S\Lambda_\ell$ to $S\Lambda_\ell$. However, we can show that
\begin{lem}\label{lemma4.3}  
	$T$ maps $S\Lambda_\ell$ to  $\Lambda_\ell$, and for each $(\varphi, \psi)\in S\Lambda_\ell$, $(\bar \varphi, \bar \psi)\in S\Lambda_\ell$,
	\begin{gather}\label{T11}
		 \ell^{-\frac12} \|T(\varphi, \psi)\|+\|T(\varphi, \psi)\|_\infty=o((\ln\ell)^{-\tau}\ell^{-\frac{m}{2}-\tau}),\\
	  d(ST(\varphi, \psi), ST (\bar \varphi, \bar \psi))= 
	 o((\ln\ell)^{-\tau})(\|\varphi-\bar \varphi\|_{L^\infty( P_1)}+  \|\psi-\bar \psi\|_{L^\infty(Q_1 )}).\label{T12}
	\end{gather}
\end{lem}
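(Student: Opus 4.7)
The plan is to estimate $T(\varphi,\psi) = L^{-1}(\gamma + R(\varphi,\psi) + N_n(\varphi,\psi))$ in both the Hilbert norm and $L^\infty$, and then derive the Lipschitz property via linearity combined with Lemma \ref{lemmawz}. The invertibility of $L$ on $\mathbb E$ provided by Lemma \ref{lemma2.2} reduces the $H^1$ part of \eqref{T11} to estimating $\|\gamma\|$, $\|R(\varphi,\psi)\|$, and $\|N_n(\varphi,\psi)\|$ separately. The dominant contribution will come from $\gamma$: combining Lemma \ref{Appendix1} with \eqref{Ku}, each component of $\gamma$ is pointwise bounded by $O(\ell^{-m/2-2\tau})$ times a sum of exponentials, giving $\|\gamma\| = O(\ell^{1/2-m/2-2\tau})$, already $o((\ln\ell)^{-\tau}\ell^{1/2-m/2-\tau})$. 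The remainder $R$ is superquadratic in $(\varphi,\psi)$, and since elements of $S\Lambda_\ell$ inherit the pointwise exponential decay from Lemma \ref{lemma3.5}, $\|R(\varphi,\psi)\|$ is a polynomial factor smaller than $\|\gamma\|$. The coupling term $N_n(\varphi,\psi)$ will be exponentially small: the mean value theorem applied to (Gn3) gives $|\partial_i G_n(u,v)| \leq C|u|^\sigma|v|^{\sigma+1}$ (or the symmetric version), and since $U_r+\varphi$ and $V_\rho+\psi$ concentrate around disjoint sets of centers separated by distance $\sim \ell\ln\ell$, the resulting bound is $O(e^{-c\ell})$.

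The $L^\infty$ part of \eqref{T11} calls for Lemma \ref{lem:2.6} on balls of radius $O(\ln\ell)$ around one representative center $x^1$ (and one $y^1$); the symmetry built into $\mathbb H_s$ handles all other centers, while the orthogonality defining $E_r$, $E_\rho$ matches the constraint in that lemma. The $L^q$ norm of the right-hand side on such a ball is controlled by the same pointwise estimates as above, while the localized $H^1$ norm of $T(\varphi,\psi)$ is bounded by $\|T(\varphi,\psi)\|/\sqrt{\ell}$ via the $2\pi/\ell$ rotational symmetry. This should produce $\|T(\varphi,\psi)\|_\infty = O(\ell^{-m/2-2\tau}) = o((\ln\ell)^{-\tau}\ell^{-m/2-\tau})$.

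For the Lipschitz estimate \eqref{T12}, linearity gives $T(\varphi,\psi) - T(\bar\varphi,\bar\psi) = L^{-1}(\Delta R + \Delta N_n)$ with $\Delta R = R(\varphi,\psi) - R(\bar\varphi,\bar\psi)$ and analogously for $\Delta N_n$. The mean value theorem yields $|\Delta R_1| \leq C(\ln\ell)^{-(p-2)/2}|\varphi-\bar\varphi|$ in the outer region (thanks to the smallness of $U_r$ and of $\varphi, \bar\varphi$ there), while $|\Delta N_n|$ remains exponentially small by the same segregation argument as above. Lemma \ref{lemmawz} then propagates the bound $\|\varphi-\bar\varphi\|_\infty + \|\psi-\bar\psi\|_\infty \leq C(\ln\ell)^\tau(\|\varphi-\bar\varphi\|_{L^\infty(P_1)} + \|\psi-\bar\psi\|_{L^\infty(Q_1)})$ from the inner restrictions to all of $\R^N$. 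Combining this with a further application of Lemmas \ref{lemma2.2} and \ref{lem:2.6} to convert global pointwise bounds into the $d$-metric on $P_1 \cup Q_1$, and using that $ST(\cdot)|_{P_1 \cup Q_1} = T(\cdot)|_{P_1 \cup Q_1}$ by the definition of $S$, one obtains \eqref{T12} with the prefactor $o((\ln\ell)^{-\tau})$ emerging from the product $(\ln\ell)^{-(p-2)/2}\cdot(\ln\ell)^\tau$ (which is consistent with \eqref{tau}).

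The hard part will be to make all these estimates fit together sharply enough to produce the $o((\ln\ell)^{-\tau})$ factors in both \eqref{T11} and \eqref{T12}. The classical perturbative framework breaks down here because $\partial_i G_n$ has sublinear behavior near zero, so $T$ is not a contraction on $\Lambda_\ell$ itself. The crucial observation that salvages the scheme is the restriction to $S\Lambda_\ell$: every element there already satisfies the outer equation and inherits the exponential decay of Lemma \ref{lemma3.5}, which, combined with the large $\sim \ell\ln\ell$ separation between $\{x^j\}$ and $\{y^k\}$, reduces $N_n$ and $\Delta N_n$ to exponentially subdominant terms and makes the contraction work despite the nonsmoothness of $G_n$.
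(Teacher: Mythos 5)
Your outline captures the right high-level ideas---restricting to $S\Lambda_\ell$ to exploit the outer solvability, using Lemma \ref{lemma2.2} and Lemma \ref{lem:2.6} to pass from $H^1$ to $L^\infty$, and observing that the coupling $N_n$ is exponentially small because the $x^j$'s and $y^j$'s are separated by $O(\ell\ln\ell)$---but it has a concrete gap in the $L^\infty$ control. Applying Lemma \ref{lem:2.6} near the concentration centers controls $T(\varphi,\psi)$ only on $\bigcup_j B_{O(\ln\ln\ell)}(x^j)$; nothing in your outline controls $\varphi_0 := T(\varphi,\psi)$ far from all centers, and a global $H^1$ bound alone does not yield $L^\infty$ decay there (the operator $L_r$ is indefinite near the centers, so a naive maximum principle for $\varphi_0$ itself is unavailable). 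The paper closes this via the key identity from \eqref{eq 4.1} and the definition of $T$: $\langle L_r(\varphi_0 - \varphi),\xi\rangle = 0$ for every $\xi\in H_0^1(\R^N\setminus P_1)\cap H_s$, which means $\varphi_0-\varphi$ is a weak solution of the \emph{homogeneous} linearized equation outside $P_1$ and satisfies a subsolution inequality \eqref{eq 4.5}; combined with the comparison principle and the decay of $\varphi$ (Lemma \ref{lemma3.5}), this produces the outer pointwise bound \eqref{eq 4.3} on $\varphi_0$. You mention that elements of $S\Lambda_\ell$ solve the outer equation, but you use this only to deduce the decay of $\varphi$, not to cancel the equation for $\varphi_0$; the latter is the essential step.

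The same cancellation is what justifies subtracting $\chi\varphi$ (with $\supp(1-\chi)\subset P_4$) before applying Lemma \ref{lemma2.2}: it localizes the relevant estimates of $\gamma$, $R$, $N_n$ and their differences to $P_4$, where $U_r\geq(\ln\ell)^{-5}$, rather than requiring global $L^2$ bounds. This matters for your contraction estimate \eqref{T12}: the bound $|\Delta R_1|\lesssim(\ln\ell)^{-(p-2)/2}|\varphi-\bar\varphi|$ that you invoke is the one used inside the outer problem (cf.\ \eqref{outerdifference} in Lemma \ref{lemmawz}'s proof) to establish decay of $w=\varphi-\bar\varphi$; in Lemma \ref{lemma4.3} itself, $\Delta R_1$ is estimated on $P_4$ and is of the much smaller order $o(\ell^{-m/2})\|w\|_{L^\infty}$. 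Consequently, the $o((\ln\ell)^{-\tau})$ prefactor in \eqref{T12} does not come from $(\ln\ell)^{-(p-2)/2}\cdot(\ln\ell)^\tau$ as you suggest, but from the cutoff commutator $\int w(2\nabla\chi\nabla\xi + \Delta\chi\xi)$ together with the decay of $w$ in the annulus $P_4\setminus P_3$.
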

\begin{proof}
(i) Let $(\varphi, \psi) \in S\Lambda_\ell$, and denote $(\varphi_0, \psi_0) = T(\varphi, \psi)$, $(u, v) = (\varphi, \psi) + (U_r, V_\rho)$. We aim to verify \eqref{T11}.
For any $(\xi, 0) \in \mathbb{E}$, we have  
\begin{equation}\label{eq:int1}
\int_{\R^N}(\gamma_1   +R_1(\varphi) +\beta\partial_1 G_n(u,v))\xi=\langle L_r\varphi_0, \xi\rangle.
\end{equation}
If we assume further that $\xi\in H_0^1(\R^N\setminus P_1)$,
then by \eqref{eq 4.1}, it follows that  
\begin{equation}\label{eq 4.2}
	\langle L_r(\varphi_0-\varphi ), \xi\rangle=0.
\end{equation}
Since 
for large $\ell$,  $(p-1)U_r^{p-2} < \frac{1}{2}$ in $\mathbb{R}^N \setminus P_1$,  it follows that   $\varphi_0-\varphi $ satisfies the inequalities:
\begin{equation}\label{eq 4.5}
-\Delta|\varphi_0-\varphi| +\frac12|\varphi_0-\varphi|\leq 0\mbox{ in } \R^N\setminus P_1.
\end{equation}
Consequently, we obtain
\[
|\varphi_0-\varphi|(x)\leq C(\ln\ell)^\tau \|\varphi_0-\varphi\|_{L^\infty(P_1)}\sum_{j=1}^\ell e^{-\tau|x-x^j|}, x \in \R^N\setminus P_1.\]
By Lemma \ref{lemma3.5} and $\|\varphi\|_{P} \leq  \ell^{-\frac{m}{2}-\tau}$,
we know 
\begin{equation}\label{eq 4.3}
	|\varphi_0(x)|\leq C(\ln\ell)^\tau (\|\varphi_0\|_{L^\infty(P_1)}+\ell^{-\frac{m}{2}-\tau})
	\sum_{j=1}^\ell e^{-\tau|x-x^j|}, x \in \R^N\setminus P_1.
\end{equation}

Now, let $\chi \in C^1(\mathbb{R}^N)$ be a cutoff function satisfying  
\begin{equation}\label{chi}
	\begin{cases}
	\chi = 0   \text{ in } P_3,\quad \chi = 1   \text{ in } \mathbb{R}^N \setminus P_4, \\
	0 \leq \chi \leq 1,\quad  |\nabla \chi| \leq 2/\ln\ln\ell,\quad  |\Delta \chi|\leq   8N/(\ln\ln\ell)^2  \text{ in } \mathbb{R}^N.
	\end{cases}
\end{equation}
By \eqref{zeta}, \eqref{eq 4.2}, and Lemma \ref{lemma3.5}, for any $\xi \in E_r$, we compute  
\begin{align*}
	\langle L_r (\varphi_0-\chi \varphi), \xi \rangle
	=&\langle L_r \varphi_0, \xi \rangle-	\langle L_r \varphi, \chi\xi \rangle 
	+\int_{\R^N}\varphi (2\nabla \chi\nabla\xi+ \Delta \chi \xi)\\
	=&\langle L_r \varphi_0, (1-\chi)\xi \rangle+	\langle L_r(\varphi_0- \varphi), \chi\xi \rangle +o( \|\varphi\|_{L^2(P_4\setminus P_3)})\|\xi\|\\
	=&\langle L_r \varphi_0, (1-\chi)\xi \rangle +o((\ln\ell)^{-\tau} \ell^{-\frac{m-1}{2}-\tau})\|\xi\|.
	\end{align*}
Since $\supp (1-\chi) \subset P_4$, we conclude   
\begin{align}\label{1'}
	\begin{split}
	& \|L_r (\varphi_0-\chi \varphi)\|\\
	\leq&\|\gamma_1\|_{L^2(P_4)}+ \| R_{1}(\varphi)\|_{L^2(P_4)} +|\beta| \|\partial_1G_n(u,v) \|_{L^2(P_4)}+o((\ln\ell)^{-\tau} \ell^{-\frac{m-1}{2}-\tau})\\
	\leq & O(\ell^\frac12 (\ln\ln\ell)^\frac{N}{2}) (\|\gamma_1\|_{L^\infty(P_4)}+ \| R_{1}(\varphi)\|_{L^\infty(P_4)} +  \|\partial_1G_n(u,v) \|_{L^\infty(P_4)})\\
	&+o((\ln\ell)^{-\tau} \ell^{-\frac{m-1}{2}-\tau}).
	\end{split}
\end{align}
From Lemma \ref{Appendix1} and \eqref{Ku}, we know $\|\gamma_1\|_{L^\infty(P_4)} = O(\ell^{-\frac{m}{2}-2\tau})$. Since $|U_r| \geq (\ln \ell)^{-5}$ and $|\varphi| \leq (\ln \ell)^{\tau} \ell^{-\frac{m}{2}-\tau}$ in $P_4$, we estimate  for some $\theta\in(0,1)$,
\[
|R_{1}(\varphi)|\leq C|(U+\theta \varphi)^{p-2}-U^{p-2}||\varphi| \leq C(\ln\ell)^{5(3-p)^+}\varphi^2=o(\ell^{-m})\quad 
\text{ in } P_4.
\]
Moreover, by (Gn3), we have in $P_4$,
\[|\partial_1G_n(u,v)  |
=O( |\partial_{12}G_n(\theta u, v)|)=O(|uv|^\sigma)=O(e^{-\ell}).
\]
Therefore,  
\[
\|\varphi_0\|_{H^1(P_3)}\leq 
\|\varphi_0-\chi \varphi\|\leq 
\|L_r (\varphi_0-\chi \varphi)\|=o((\ln\ell)^{-\tau} \ell^{-\frac{m-1}{2}-\tau})
\]
Using \eqref{eq:int1} and Lemma \ref{lem:2.6}, for each $y\in \cup_{j=1}^\ell B_{3\ln\ln\ell-2}(x^j)$, we  obtain
\begin{align*}
&\|\varphi_0\|_{L^\infty (B_{1}(y))}\\
\leq& C(  \| \varphi_0\|_{H^1(B_{2}(y))} +\|\gamma_1\|_{L^\infty(B_{2}(y))}+ \|R_{1}(\varphi) \|_{L^\infty(B_{2}(y))}  + \|\partial_1G_n(u,v) \|_{L^\infty(B_{2}(y))} )\\
\leq & C \ell^{-\frac12} \| \varphi_0\|_{H^1(P_3)} +  o(  \ell^{-\frac{m}{2}-2\tau})=o((\ln\ell)^{-\tau} \ell^{-\frac{m}{2}-\tau}).
\end{align*}
Thus,   \[\|\varphi_0\|_{L^\infty (\cup_{j=1}^\ell B_{3\ln\ln\ell-2}(x^j))}=o((\ln\ell)^{-\tau} \ell^{-\frac{m}{2}-\tau}).\]
Recalling 
\eqref{eq 4.3}, we also have  
\[
\|\varphi_0\|_{L^\infty (\R^N\setminus\cup_{j=1}^\ell B_{3\ln\ln\ell-2}(x^j))}=o((\ln\ell)^{-\tau} \ell^{-\frac{m}{2}-\tau}).
\]
Therefore,
\[
\|\varphi_0\|_{L^\infty(\R^N)}=o((\ln\ell)^{-\tau} \ell^{-\frac{m}{2}-\tau}).
\]
Using \eqref{eq:4.5} and similar arguments as in \eqref{eq:weq2}, we derive  
\[
 \int_{\R^N\setminus P_3}|\nabla \varphi_0-\nabla\varphi|^2 + \frac12 |\varphi_0-\varphi|^2\leq 
 \int |\nabla \eta|^2 |\varphi_0-\varphi|^2= O (\ell^{-m-2\tau+1} (\ln\ell)^{-3\tau}),
\]
where $\eta$ is given as in \eqref{eq:weq2}.
Combining this with Lemma \ref{lemma3.5}, we obtain  
\[
\|\varphi_0\|_{H^1(\R^N)} \leq  \|\varphi_0\|_{H^1(P_3)}+ \|\varphi_0-\varphi\|_{H^1(\R^N\setminus P_3)}
+\|\varphi\|_{H^1(\R^N\setminus P_3)}=o((\ln\ell)^{-\tau} \ell^{-\frac{m-1}{2}-\tau}).
\]
An analogous result holds for $\psi_0$. Therefore, we obtain   \eqref{T11} and $T:S\Lambda_\ell\to \Lambda_\ell$.

(ii) Let $(\bar \varphi, \bar \psi)\in S\Lambda_\ell$ be another point,  We then show \eqref{T12} 
 Set
	\begin{gather*}
		(u, v) = (\varphi, \psi) + (U_r, V_\rho), (\bar u,\bar v) =(\bar \varphi, \bar \psi)+ (U_r,V_\rho),\\
		(w,z)=	(\varphi,\psi)- (\bar\varphi,\bar\psi)=(u,v)-(\bar u,\bar v),\\
	(W,Z)=T (\varphi, \psi)-T (\bar \varphi, \bar \psi).
	\end{gather*}
	Then  by the definition of $S$, $d(ST(\varphi, \psi), ST (\bar \varphi, \bar \psi))=\|W\|_P+\|Z\|_Q$.
We have for   $(\xi,0)\in\mathbb E$,
\begin{equation} \label{eq4.3}
	\begin{aligned}
	   \int_{\R^N}\left( R_{1}(\varphi)-R_1(\bar \varphi)+\beta\partial_1G_n(u,v)-\beta\partial_1G_n(\bar u,\bar v)\right)\xi &=\langle L_r W, \xi \rangle.
	\end{aligned}
\end{equation}
From \eqref{eq 4.2}
for   $(\xi, 0)\in \mathbb E$ with $\xi\in H_0^1(\R^N\setminus P_1)$,
 we obtain
\[
\langle L_r (W-w), \xi \rangle =0
 \text{
for any  }\xi\in H_0^1(\R^N\setminus P_1)\cap H_s.\]
 
Then by \eqref{zeta}, \eqref{eq4.3}, and Lemma \ref{lemmawz}, we have for $\xi\in E_r$,
\begin{align*}
	\langle L_r (W-\chi w), \xi \rangle=&\langle L_r W, \xi \rangle-	\langle L_r w, \chi\xi \rangle +\int_{\R^N} w(2\nabla \chi\nabla\xi+\Delta \chi \xi)\\
	=&\langle L_r W, (1-\chi)\xi \rangle+	\langle L_r(W- w), \chi\xi \rangle +o( \|w\|_{L^2(P4\setminus P_3)})\|\xi\|\\
	=&\langle L_r W, (1-\chi)\xi \rangle +o((\ln\ell)^{-\tau}) \ell^\frac12	\left( \|w\|_{L^\infty( P_1)}+  \|z\|_{L^\infty(Q_1 )}\right)\|\xi\|,
	\end{align*}
	where $\chi$ is the cutoff function satisfying \eqref{chi}.
Since $\supp (1-\chi) \subset P_4$, we obtain 
\begin{align}\label{1}
	\begin{split}
	& \|L_r (W-\chi w)\|\\
	\leq& \| R_{1}(\varphi)-R_1(\bar \varphi)\|_{L^2(P_4)} +|\beta| \|\partial_1G_n(u,v)-\partial_1G_n(\bar u,\bar v)\|_{L^2(P_4)}\\
	&\quad+o((\ln\ell)^{-\tau}) \ell^\frac12	\left( \|w\|_{L^\infty( P_1)}+  \|z\|_{L^\infty(Q_1 )}\right)\\
	\leq &C\ell^\frac12(\ln\ln\ell)^\frac{N}{2}\big(\| R_{1}(\varphi)-R_1(\bar \varphi)\|_{L^\infty(P_4)} +
	  \|\partial_1G_n(u,v)-\partial_1G_n(\bar u,\bar v)\|_{L^\infty(P_4)}\big) 
	  \\& \quad+o((\ln\ell)^{-\tau}) \ell^\frac12	\left( \|w\|_{L^\infty( P_1)}+  \|z\|_{L^\infty(Q_1 )}\right).\end{split}
\end{align}

Note that  $\inf_{P_4}  U_r>(\ln\ell)^{-5}$. For some $\tilde\varphi$ satisfying $|\tilde\varphi|\leq |\varphi|+|\bar \varphi|$, by
Lemma \ref{lemma3.5}, we have
	\begin{align*}
	\| R_{1}(\varphi)-R_1(\bar \varphi)\|_{L^\infty(P_4)} 
	\leq & C\|(|U+\tilde\varphi|^{p-2}-U^{p-2} )w\|_{L^\infty(P_4)}\\
	\leq& C(\ln\ell)^{ 5(3-p)^+} \| \tilde\varphi w\|_{L^\infty(P_4)}\\
	=& o(\ell^{-\frac{m}{2}})\|   w\|_{L^\infty(P_4)}.
	\end{align*}
	where $(3-p)^+=\max\set{3-p,0}$.
 To estimate $\|\partial_1G_n(u,v)-\partial_1G_n(\bar u,\bar v)\|_{L^\infty(P_4)}$, we note that 
\begin{align*}
	 |\partial_1G_n(u,v)-\partial_1G_n(\bar u,\bar v)|	 
	\leq  |\partial_1G_n(u,v)-\partial_1G_n( u,\bar v)| +|\partial_1G_n(u,\bar v)-\partial_1G_n(\bar u,\bar v)|.
\end{align*}
Since $\dist(y^j, P_4)\geq C\ell\ln\ell$,
 we have by Lemma \ref{lemma3.3}
  $\|v\|_{L^\infty(P_4)}=o(e^{- \ell})$ and $\inf_{P_4}u\geq (\ln \ell)^{-5}$.
 Using (Gn3) and \eqref{nl}, we estimate
\begin{align*}
	\|\partial_1G_n(u,v)-\partial_1G_n( u,\bar v)\|_{L^\infty(P_4)} \leq &	C\sup_{P_4} (|uv|^\sigma +|u\bar v|^\sigma) |z| 
	\leq     e^{- \ell}\|   z\|_{L^\infty(P_4)},\\
	\|\partial_1G_n(u,\bar v)-\partial_1G_n(\bar u,\bar v)\|_{L^\infty(P_4)}\leq &C\sup_{P_4}|v|^{\sigma+1} |(\ln\ell)^{-5}-\frac1n|^{\sigma-1} |w| 
	\leq   e^{-  \ell} \|w\|_{L^\infty(P_4)}.
\end{align*}
By Lemma \ref{lemmawz} we have 
\[
\|(w,z)\|_\infty =O (\|w\|_{L^\infty(P_1)}+\|z\|_{L^\infty(Q_1)}).
\]
Therefore,
in view of \eqref{1}  we conclude that
\begin{align*}
	\|L_r (W-\chi w)\|= o((\ln\ell)^{-\tau}) \ell^\frac12	\left( \|w\|_{L^\infty( P_1)}+  \|z\|_{L^\infty(Q_1 )}\right).
\end{align*}
Furthermore, by Lemma \ref{lemma2.2},
\begin{equation}\label{eq:4.5}
	\begin{aligned}
		\|W\|_{H^1(P_3)}&\leq \|W-\chi w\| 
		\leq C\|L_r (W-\chi w)\| \\
		& = o((\ln\ell)^{-\tau}) \ell^\frac12	\left( \|w\|_{L^\infty( P_1)}+  \|z\|_{L^\infty(Q_1 )}\right).
	\end{aligned}
\end{equation}
Using \eqref{eq4.3} and Lemma \ref{lem:2.6}, for each $y\in P_2$, we  obtain
\begin{align*}
\|W\|_{L^\infty (B_{1}(y))}\leq& C(  \| W\|_{H^1(B_{2}(y))} + \|R_{1}(\varphi)-R_1(\bar \varphi)\|_{L^\infty(B_{2}(y))} \\
&+ \|\partial_1G_n(u,v)-\partial_1G_n(\bar u,\bar v)\|_{L^\infty(B_{2}(y))} )\\
=& O(\ell^{-\frac12})\| W\|_{H^1(P_3)}+  o((\ln\ell)^{-\tau}) 	\left( \|w\|_{L^\infty( P_1)}+  \|z\|_{L^\infty(Q_1 )}\right)\\
=&o((\ln\ell)^{-\tau}) 	\left( \|w\|_{L^\infty( P_1)}+  \|z\|_{L^\infty(Q_1 )}\right),
\end{align*}
which implies
\[
\|W\|_{L^\infty (P_2)}	=   o((\ln\ell)^{-\tau}) 	\left( \|w\|_{L^\infty( P_1)}+  \|z\|_{L^\infty(Q_1 )}\right).
\]
Therefore,
\[
\|W\|_{P}	=   o((\ln\ell)^{-\tau}) 	\left( \|w\|_{L^\infty( P_1)}+  \|z\|_{L^\infty(Q_1 )}\right).
\]
Similarly,
we have the estimate for $Z$ and  the proof is complete.
\end{proof}
\medskip
Lemma \ref{lemma4.3}   implies $ST:S\Lambda_\ell\to S\Lambda_\ell$ is well-defined and 
is a contraction map. Therefore, it has a unique fixed point in $S\Lambda_\ell$.
\begin{lem}\label{lemma4.1}
	$(\varphi, \psi)$ is a fixed point of   $ST$ in $S\Lambda_\ell$ if and only if it is a fixed point of $T$ in $\Lambda_\ell$. Furthermore, any such fixed point satisfies $(\varphi, \psi)\in S\Lambda_\ell \cap \Lambda_\ell$
	and $(\varphi, \psi)=S(\varphi, \psi)=T(\varphi, \psi)$.
\end{lem}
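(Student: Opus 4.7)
The plan is to prove the two implications separately, exploiting (i) the uniqueness from Lemma~\ref{lemmawz} for the outer problem, and (ii) the $L^2$-coercivity of $L_r$ on the exterior region that was already used in the proof of Lemma~\ref{lemma4.3}. A preliminary observation I would record once for use throughout is that the truncated kernel directions $X_j(r)$ and $Y_j(\rho)$ are supported in $B_2(x^j)\subset P_1$ and $B_2(y^j)\subset Q_1$ respectively, so for $\ell$ large every $\xi\in H_0^1(\R^N\setminus P_1)\cap H_s$ automatically lies in $E_r$, and analogously on the $\rho$-side. This lets me test the equation defining $T$ freely against pairs supported outside $P_1\cup Q_1$.

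For the direction \emph{fixed point of $T$ in $\Lambda_\ell$ implies fixed point of $ST$ in $S\Lambda_\ell$}, I would take $(\varphi,\psi)\in\Lambda_\ell$ with $T(\varphi,\psi)=(\varphi,\psi)$ and note that testing $L(\varphi,\psi)=\gamma+R(\varphi,\psi)+N_n(\varphi,\psi)$ against pairs supported outside $P_1\cup Q_1$ yields that $(u,v):=(\varphi,\psi)+(U_r,V_\rho)$ solves \eqref{eqouter}. The smallness of $(\varphi,\psi)\in\Lambda_\ell$ together with the exponential decay of $U_r,V_\rho$ gives \eqref{s0'}, so $(u,v)\in\mathbb{S}(\varphi,\psi)$. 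The uniqueness in Lemma~\ref{lemmawz} then forces $S(\varphi,\psi)=(\varphi,\psi)$, whence $(\varphi,\psi)\in S\Lambda_\ell$ and $ST(\varphi,\psi)=(\varphi,\psi)$.

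The reverse direction \emph{fixed point of $ST$ in $S\Lambda_\ell$ implies fixed point of $T$ in $\Lambda_\ell$} is where the real work is. Given $(\varphi,\psi)\in S\Lambda_\ell$ with $ST(\varphi,\psi)=(\varphi,\psi)$, I would set $(\varphi_0,\psi_0):=T(\varphi,\psi)\in\Lambda_\ell$ (Lemma~\ref{lemma4.3}) and use $S(\varphi_0,\psi_0)=(\varphi,\psi)$ to read off the boundary match $\varphi_0=\varphi$ in $P_1$ and $\psi_0=\psi$ in $Q_1$. By the preliminary observation, both $\varphi$ (from the outer equation \eqref{eq 4.1}) and $\varphi_0$ (from the definition of $T$) satisfy the same weak identity when tested against $\xi\in H_0^1(\R^N\setminus P_1)\cap H_s$, and subtraction gives $\langle L_r(\varphi_0-\varphi),\xi\rangle=0$ for all such $\xi$. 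Since the boundary match allows $\varphi_0-\varphi$ itself as a test function, I would plug it in and use $K_1\geq 1$ together with the bound $(p-1)\mu U_r^{p-2}<\tfrac12$ in $\R^N\setminus P_1$ (valid for large $\ell$, as already exploited in Lemma~\ref{lemma4.3}) to conclude $\varphi_0=\varphi$ outside $P_1$, hence everywhere on $\R^N$. The symmetric argument on the $\rho$-side yields $\psi_0=\psi$, so $T(\varphi,\psi)=(\varphi,\psi)$.

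The final assertions $(\varphi,\psi)\in S\Lambda_\ell\cap\Lambda_\ell$ and $S(\varphi,\psi)=(\varphi,\psi)=T(\varphi,\psi)$ then fall out from the two directions combined, together with the identity $S\circ S=S$ recorded earlier. The hard part is the reverse direction; its technical heart is converting the identity $\langle L_r(\varphi_0-\varphi),\xi\rangle=0$, which initially holds only on the restricted class $H_0^1(\R^N\setminus P_1)\cap H_s$, into pointwise equality. This hinges both on the geometric fact that $\supp X_j(r)\subset P_1$ (so that the orthogonality constraint defining $E_r$ is vacuous for test functions supported in the exterior) and on the exterior coercivity of $L_r$ underlying Lemma~\ref{lemma4.3}.
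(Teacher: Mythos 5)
Your proposal is correct and follows essentially the same route as the paper: the forward direction rests on the outer-equation uniqueness (already recorded before the lemma), and the reverse direction derives $\langle L_r(\varphi_0-\varphi),\xi\rangle=0$ from \eqref{eq 4.2}, observes the boundary match $\varphi_0=\varphi$ on $P_1$ so that $\varphi_0-\varphi\in H_0^1(\R^N\setminus P_1)$ is an admissible test function, and plugs it in to exploit the coercivity $K_1\geq 1$, $(p-1)\mu U_r^{p-2}<\tfrac12$ outside $P_1$. The only cosmetic difference is the order in which $(\varphi,\psi)\in\Lambda_\ell$ is obtained: the paper establishes it up front via \eqref{T11} and Lemma~\ref{lemma3.5}, whereas you deduce it afterward from $T(\varphi,\psi)=(\varphi,\psi)$ together with $T(S\Lambda_\ell)\subset\Lambda_\ell$; both are valid.
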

\begin{proof}
	We have observed that 
	if $(\varphi,\psi)\in\Lambda_\ell$ is a fixed point of $T$, then 
	$(\varphi,\psi)\in S\Lambda_\ell$. 
	Hence 
	\[
	(\varphi,\psi)=S(\varphi,\psi)=ST(\varphi,\psi).
	\]
	Therefore, it is a fixed point of $ST$ in $S\Lambda_\ell$.

	Now assume  that $(\varphi,\psi)\in S\Lambda_\ell$
	is a fixed point of $ST$. By \eqref{T11} and  Lemma \ref{lemma3.5},
	we know $(\varphi,\psi)=ST(\varphi,\psi)\in \Lambda_\ell$
	for large $\ell$. Furthermore,
	denoting 
$(\varphi_0, \psi_0)=T(\varphi,\psi)$, it suffices to show that $(\varphi_0, \psi_0)=(\varphi, \psi)$.
	By the definition of $S$,
	$\varphi=\varphi_0$ in $P_1$ and $\psi=\psi_0$ in $Q_1$.
	Hence, $\varphi_0-\varphi\in H_0^1(\R^N\setminus P_1)$.
	Set $\xi=\varphi_0-\varphi$ in \eqref{eq 4.2}, we obtain 
	\[
	\langle L_r(\varphi_0-\varphi) ,\varphi_0-\varphi\rangle =0.
	\]
	That is 
	\[
	\int_{\R^N\setminus P_1} |\nabla \xi|^2+(K_1 -(p-1) U_r^{p-2})\xi^2 
	=0.\]
Since $\sup_{\R^N\setminus P_1}U_r^{p-2}\to 0$  as $\ell\to 0$, we obtain
$\varphi_0-\varphi=\xi=0$ for large $\ell$. Similarly, $\psi_0-\psi=0$.
Then we have the conclusion.
\end{proof}
From Lemma \ref{lemma4.3} and Lemma \ref{lemma4.1}, for each $(r,\rho)\in \mathbb D_1\times \mathbb D_2$, there is 
a critical point $(\varphi_0(r,\rho),\psi_0(r,\rho))$ of $J_n$ on $\mathbb E=\mathbb E_{r,\rho}$. 
Precisely, we have the following result.
\begin{pro}
	For any $(r,\rho)\in \mathbb D_1\times \mathbb D_2$, there is  	$(\varphi_0(r,\rho),\psi_0(r,\rho))\in \Lambda_\ell$ such that 
\begin{gather*}
	\nabla_{\mathbb E }  J_n(\varphi_0(r,\rho),\psi_0(r,\rho))=0 \mbox{ in } \mathbb E_{r,\rho}.
\end{gather*} Moreover, it holds that
\begin{gather*}
	(\varphi_0(r,\rho),\psi_0(r,\rho))\in C^1(\mathbb D_1\times \mathbb D_2, \mathbb H_s).
\end{gather*} 
\end{pro}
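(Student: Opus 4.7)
The plan is two-fold: first, produce $(\varphi_0,\psi_0)$ as the unique fixed point of $ST$ on the complete metric space $(S\Lambda_\ell,d)$ and translate it into a critical point of $J_n$ via Lemma \ref{lemma4.1}; then, upgrade to $C^1$ regularity in $(r,\rho)$ via the implicit function theorem applied on the ambient space $\mathbb H_s$.

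Existence is almost immediate from the machinery already in place. By Lemma \ref{lemma4.3}, $ST$ maps $(S\Lambda_\ell,d)$ into itself and is a contraction with Lipschitz constant $o((\ln\ell)^{-\tau})<1$ for $\ell$ large, so Banach's theorem delivers a unique fixed point $(\varphi_0,\psi_0)\in S\Lambda_\ell$. Lemma \ref{lemma4.1} then promotes this to a fixed point of $T$ in $\Lambda_\ell$, which amounts to the equation $L(\varphi_0,\psi_0)=\gamma+R(\varphi_0,\psi_0)+N_n(\varphi_0,\psi_0)$ in $\mathbb E_{r,\rho}$; by the very definitions of $L$, $\gamma$, $R$, $N_n$, this is exactly $\nabla_{\mathbb E}J_n(\varphi_0,\psi_0)=0$.

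To obtain $C^1$ dependence, I would circumvent the $(r,\rho)$-dependent constraint space by lifting the problem to $\mathbb H_s\times\R^{3\ell}$ using Lagrange multipliers for the $3\ell$ linear constraints $\int X_j(r)\varphi=0$ and $\int Y_j(\rho)\psi=0$ that define $\mathbb E_{r,\rho}$. The resulting system defines a map
\[
F:\mathbb H_s\times\R^{3\ell}\times\mathbb D_1\times\mathbb D_2\to\mathbb H_s\times\R^{3\ell}
\]
that is $C^1$ because $G_n\in C^2$ and because the cutoffs $X_j(r), Y_j(\rho)$ depend $C^1$ on $(r,\rho)$. At the solution $(\varphi_0,\psi_0)$ produced above, all Lagrange multipliers vanish (as one sees by testing the equation against elements of $\mathbb E_{r,\rho}$). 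The Fr\'echet derivative of $F$ in the $(\varphi,\psi,\text{multipliers})$ variables is a bordered operator whose diagonal block is $L$ plus small perturbations $DR+DN_n$ (controlled by Lemma \ref{Appendix1}, (Gn3), and the decay estimates of Section \ref{sec3}), while Lemma \ref{lemma2.2} provides invertibility of $L$ on $\mathbb E_{r,\rho}$ with a bound independent of $(r,\rho)$. Hence the bordered Jacobian is invertible uniformly in $(r,\rho)\in\mathbb D_1\times\mathbb D_2$, and the implicit function theorem yields $(\varphi_0,\psi_0)\in C^1(\mathbb D_1\times\mathbb D_2,\mathbb H_s)$.

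The hardest step is exactly this uniform invertibility of the bordered Jacobian. It rests on Lemma \ref{lemma2.2} for the principal block and on the smallness of $DR+DN_n$ in operator norm, which in turn relies on the same sublinear-coupling estimates from (Gn3) and Section \ref{sec3} that powered the contraction of Lemma \ref{lemma4.3}. Thus the same analytic framework underpins both existence and $C^1$ regularity, with the bordering by the multipliers $X_j(r), Y_j(\rho)$ being the only genuinely new ingredient needed for the smoothness part.
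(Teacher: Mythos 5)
Your existence argument matches the paper's exactly: Banach fixed point on $(S\Lambda_\ell,d)$ for $ST$ via Lemma \ref{lemma4.3}, promoted to a fixed point of $T$ via Lemma \ref{lemma4.1}, which unwinds to $\nabla_{\mathbb E}J_n=0$. That part is fine.

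For the $C^1$ regularity, your Lagrange-multiplier lift is a legitimate alternative packaging of the implicit function theorem to the paper's conjugation trick with the operators $\mathscr{A}_{r,\rho}:\mathbb E_{r_0,\rho_0}\to\mathbb E_{r,\rho}$; both reduce the issue to invertibility of the linearization on the constraint tangent space. But two things go wrong in your execution.

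First, a factual error: at $(\varphi_0(r,\rho),\psi_0(r,\rho))$ the Lagrange multipliers do \emph{not} vanish. Testing the Euler--Lagrange system against $(w,z)\in\mathbb E_{r,\rho}$ kills the multiplier terms automatically (since $\int X_j(r)w=\int Y_j(\rho)z=0$), so it gives no information about the multipliers; they are precisely the projections of $\nabla I_n(U_r+\varphi_0,V_\rho+\psi_0)$ onto $\mathrm{span}\{X_j\}\times\mathrm{span}\{Y_j\}$, and forcing \emph{them} to vanish is the content of Section 5. This is not fatal to the IFT scheme (the multipliers are just additional $C^1$ unknowns), but the claim as stated is wrong.

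Second, and more seriously, the claim that the diagonal block of the bordered Jacobian is ``$L$ plus small perturbations $DR+DN_n$'' is not justified and is most likely false. The Hessian contribution $-\beta\,\partial_{11}G_n(u,v)$ (and $-\beta\,\partial_{22}G_n$) is \emph{not} small in operator norm: by (Gn3), $|\partial_{ii}G_n|$ behaves like $(|s_i|-1/n)^{\sigma-1}|s_j|^{\sigma+1}$ with $\sigma-1<0$, so it can be large on the level set where $u\approx 1/n$. The paper does not try to absorb this term as a small perturbation of $L$. Instead it exploits the \emph{sign}: since $\partial_i G_n$ is increasing in $s_i$ and $\beta<0$, the diagonal Hessian terms contribute \emph{nonnegatively} to the quadratic form, and the paper pairs $L_n(\varphi,\psi)$ with $(\varphi_2,\psi_2)=(1-\chi_P,1-\chi_Q)(\varphi,\psi)$ precisely so that $\varphi\varphi_2\ge0$ and $\psi\psi_2\ge0$ preserve this sign. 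The coercivity of $L_n$ is then obtained from a two-zone decomposition $(\varphi,\psi)=(\varphi_1,\psi_1)+(\varphi_2,\psi_2)$: in the inner region $P_2\cup Q_2$ one shows $\partial_{ii}G_n(u,v)\to0$ and $|u|^{p-2}-U_{r_0}^{p-2}\to0$ so Lemma \ref{lemma2.2} applies, while in the outer region the potential dominates and the sign argument handles the coupling. Without reproducing this structure, your assertion that the bordered Jacobian is uniformly invertible is unsupported.
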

\begin{proof}  
Lemma \ref{lemma4.1} and Lemma \ref{lemma4.3} imply the existence and well-definedness of 
$(\varphi_0(r,\rho),\psi_0(r,\rho))$
such that \[\nabla_{\mathbb E }  J_n(\varphi_0(r,\rho),\psi_0(r,\rho))=0.\]

For each $(r,\rho)$,  we introduce a bounded linear operator $\mathscr{A}_{r,\rho}:\mathbb E_{r_0,\rho_0}\to \mathbb E_{r,\rho}$ defined as follows:
\[
\mathscr{A}_{r,\rho}	(\varphi,\psi)=\left(\varphi-X(r)\frac{\int_{\R^N}X(r)\varphi}{\int_{\R^N}X(r)^2}, \psi-Y(\rho)\frac{\int_{\R^N}Y(\rho)\psi}{\int_{\R^N}Y(\rho)^2}\right)\in\mathbb E_{r,\rho},  (\varphi,\psi) \in \mathbb E_{r_0,\rho_0}.
\]
It is important to note that $\mathscr{A}_{r,\rho}$ is invertible, and specifically, $\mathscr{A}_{r_0,\rho_0}: \mathbb{E}_{r_0,\rho_0} \rightarrow \mathbb E_{r_0,\rho_0}$ is the identity operator.
Given  $(\varphi, \psi, r, \rho)\in \mathbb E_{r_0,\rho_0}\times \mathbb D_1\times \mathbb D_2$, we define the functional
$\mathscr{F}:\mathbb E_{r_0,\rho_0}\times \mathbb D_1\times \mathbb D_2\to \mathbb E_{r_0,\rho_0}$ as
 \[
\mathscr{F}(\varphi,\psi, r,\rho)= \mathscr{A}_{r,\rho}^{-1} \nabla_{\mathbb E} J_n(\mathscr{A}_{r,\rho}(\varphi,\psi)).	
 \]
For the sake of notational convenience, we denote $(\varphi_0, \psi_0)=(\varphi_0(r_0,\rho_0),\psi_0(r_0,\rho_0))$.

 We claim  that 
 \[
	L_n:=\frac{\partial \mathscr{F}(\varphi_0,\psi_0, r_0,\rho_0)}{\partial (\varphi,\psi)}=\frac{\partial^2 J_n(\varphi_0, \psi_0)}{\partial (\varphi,\psi)^2} :\mathbb E_{r_0,\rho_0}\to\mathbb E_{r_0,\rho_0} 
 \mbox{ has a bounded inverse.}
 \]
 If this claim holds true,  the implicit function theorem guarantees the existence of
  a neighborhood of $(\varphi_0,\psi_0, r_0,\rho_0)$,
  containing  a unique tuple $(\bar \varphi(r,\rho), \bar\psi(r,\rho), r,\rho)$
 such that $(\bar \varphi(r,\rho), \bar\psi(r,\rho))$ is $C^1$ with respect to $(r,\rho)$, $(\bar \varphi(r_0,\rho_0), \bar\psi(r_0,\rho_0))=(  \varphi_0,  \psi_0)$, and satisfies
 \[\nabla_{\mathbb E} J_n(\mathscr{A}_{r,\rho}(\bar \varphi(r,\rho), \bar\psi(r,\rho)))=\mathscr{A}_{r,\rho}\mathscr{F}(\bar \varphi(r,\rho), \bar\psi(r,\rho), r,\rho)=0.\]
 By uniqueness, we deduce that $(\varphi_0(r,\rho), \psi_0(r,\rho))=\mathscr{A}_{r,\rho}(\bar \varphi(r,\rho), \bar\psi(r,\rho) )$,  thereby confirming that $(\varphi_0(r,\rho), \psi_0(r,\rho))$ is $C^1$ with respect to $(r,\rho)$.

It remains to verify the aforementioned claim.
To accomplish this, we configure $(u,v)=(\varphi_0+U_{r_0}, \psi_0+V_{\rho_0})$ and focus on establishing the invertibility of the linear operator $L_n$, which is characterized by a bilinear form defined for any pair
  $(\varphi,\psi), (\xi,\eta)\in \mathbb E_{r_0,\rho_0}$,
\begin{equation}\label{Hessian2}
	\begin{aligned}
	\left\langle L_n (\varphi,\psi) ,(\xi,\eta)\right\rangle_\ell  &  =\int_{\mathbb{R}^N}\bigl(\nabla \varphi\nabla\xi+K_1 \varphi\xi-(p-1)\mu  |u|^{p-2} \varphi\xi\bigr)  \\
	&+\int_{\mathbb{R}^N}\left(\nabla \psi\nabla\eta+ K_2  \psi\eta-(p-1)\nu  |v|^{p-2} \psi\eta\right) \\
	&- \beta \int_{\mathbb{R}^N}\left( \partial_{11}G_n (u,v)  \varphi\xi + 
	 \partial_{22}G_n(u,v)   \psi\eta\right)\\
	&- \beta\int_{\R^N} \partial_{12} G_n(u, v) (\psi\xi + \varphi\eta).
	\end{aligned}\end{equation}

We take $\chi_P, \chi_Q$ such that 
$\chi_P=1$ in $P_1$ and $\chi_Q=1$ in $Q_1$, 
$\chi_P=0$ in $P_2$ and $\chi_Q=0$ in $Q_2$,
$0\leq \chi_P, \chi_Q\leq 1$ and $|\nabla \chi_P|, |\nabla\chi_Q|\leq 2/\ln\ln\ell$ in $\R^N$.
Denote 
\[(\varphi_1, \psi_1)=(\chi_P\varphi, \chi_Q\psi)\in    \mathbb E_{0,r_0,\rho_0} :=H_0^1(P_2) \times H_0^1(Q_2)\cap \mathbb E_{r_0,\rho_0}.\]
$(\varphi_2,\psi_2)=(\varphi, \psi)-(\varphi_1,\psi_1)$.
Then we have 
\begin{gather*}
	\|(\varphi,\psi)\|\leq \|(\varphi_1,\psi_1)\|+\|(\varphi_2,\psi_2)\|\leq 2\|(\varphi,\psi)\|,\\
\|L_n(\varphi,\psi)\|\geq 	\|L_n(\varphi_1,\psi_1)\|-\|L_n(\varphi_2,\psi_2)\|.
\end{gather*}
First noting by (Gn2) and $\|(u,v)\|_\infty<\alpha_0$,
\begin{gather*}
	-\beta\left( \partial_{11} G_n(u, v)  \varphi\varphi_2 + 
	\partial_{22}G_n(u,v)   \psi\psi_2\right)	\geq 0,
\end{gather*} 
and by (Gn3), for $\ell$ large
\begin{gather*}
	\partial_{12}G_n(u,v)=O(|uv|^\sigma)=O(e^{-\ell})\mbox{ in } \R^N, \\
	(p-1)\mu  |u|^{p-2}<\frac12 \mbox{ in } \R^N\setminus P_2,\\
	(p-1)\nu  |v|^{p-2} <\frac12 \mbox{ in } \R^N\setminus Q_2,	
\end{gather*}
we have 
\begin{equation}\label{eq4.7}
	\begin{aligned}
		&\|L_n(\varphi,\psi)\|\|(\varphi_2,\psi_2)\|\geq \left\langle L_n (\varphi,\psi) ,(\varphi_2,\psi_2)\right\rangle \\
		\geq &\int_{\R^N} \nabla \varphi\nabla\varphi_2 +\nabla\psi\nabla \psi_2 +\frac12\int_{\R^N}( \varphi_2^2+ \psi_2^2)-O(e^{-\ell})\|(\varphi,\psi)\|^2\\
		\geq &\frac12 \|(\varphi_2,\psi_2)\|^2-O((\ln\ln\ell)^{-1}) \|(\varphi,\psi)\|^2.
	\end{aligned}
\end{equation}

On the other hand, 
  for any $(\xi,\eta)\in  \mathbb E_{r_0,\rho_0}$, we have 
\begin{align*}
	&\left\langle L_n (\varphi,\psi) ,(\xi_1,\eta_1)\right\rangle 
	     \\
	=&\left\langle L_n (\varphi_1,\psi_1) ,( \xi, \eta)\right\rangle  
	 +\int_{\R^N}\nabla\varphi\nabla\chi_P \xi -\nabla\chi_P\nabla\xi \varphi 
	 +\int_{\R^N}\nabla\psi\nabla\chi_Q \eta -\nabla\chi_Q\nabla\eta \psi\\
	 =&\left\langle L_n (\varphi_1,\psi_1) ,( \xi, \eta)\right\rangle +O((\ln\ln\ell)^{-1})\|(\varphi,\psi)\|\|(\xi,\eta)\|,
\end{align*}
where we  write $(\xi,\eta)=(\xi_1,\eta_1)+(\xi_2,\eta_2)$  with 
$(\xi_1, \eta_1)=(\chi_P\xi, \chi_Q\eta)$.
Since as $\ell\to +\infty$
\begin{align*}
	&|u|^{p-2}-U_{r_0}^{p-2}\to 0,\quad  \partial_{11} G_n(u, v) \to 0 \mbox{ in } P_2,\\
	&|v|^{p-2}-V_{\rho_0}^{p-2}\to 0, \quad  \partial_{22} G_n(u, v) \to 0 \mbox{ in } Q_2,
\end{align*}
we have 
\[\begin{aligned}
	\left\langle L_n (\varphi_1,\psi_1) ,(\xi,\eta)\right\rangle &=\left\langle L_{r_0} \varphi_1,\xi\right\rangle+\left\langle L_{\rho_0} \psi_1,\eta\right\rangle \\
	&+(p-1)\int_{\R^N}\mu(|u|^{p-2}-U_{r_0}^{p-2})\varphi_1\xi +\nu(|v|^{p-2}-V_{\rho_0}^{p-2}) \psi_1\eta \\
	&- \beta \int_{\mathbb{R}^N}\left( \partial_{11} G_n(u, v) \varphi_1\xi + 
	 \partial_{22} G_n(u, v)   \psi_1\eta\right)\\
	&- \beta\int_{\R^N}  \partial_{12} G_n(u, v) (\psi_1\xi + \varphi_1\eta)\\
	=&\left\langle L_{r_0} \varphi_1,\xi\right\rangle+\left\langle L_{\rho_0} \psi_1,\eta\right\rangle
	+o(1)\|(\varphi,\psi)\|\|(\xi,\eta)\|.
\end{aligned}	
\]	
As a result of Lemma \ref{lemma2.2}, we have 
\begin{align*}
	\|L_n (\varphi,\psi)\| \geq& \sup\Set{\left\langle L_n (\varphi,\psi) ,(\xi,\eta)\right\rangle | 
	\|(\xi,\eta)\|\leq 1, (\xi,\eta)\in  \mathbb E_{0,r_0,\rho_0}}\\
	\geq  &\sup\Set{\left\langle L_n (\varphi,\psi) ,( \xi_1, \eta_1)\right\rangle | 
	\|(\xi,\eta)\|\leq \frac12, (\xi,\eta)\in  \mathbb E_{r_0,\rho_0}}\\
	\geq&  \varrho \|(\varphi_1,\psi_1)\|+o(1)\|(\varphi,\psi)\|.
\end{align*}
 Together with \eqref{eq4.7}, we have 
 \begin{align*}
	\|L_n (\varphi,\psi)\| \|(\varphi,\psi)\| \geq& \frac12\|L_n (\varphi,\psi)\| (\|(\varphi_1,\psi_1)\| + \|(\varphi_2,\psi_2)\|)\\
	\geq &\frac12\varrho\|(\varphi_1,\psi_1)\|^2 +\frac14 \|(\varphi_2,\psi_2)\|^2 +o(1)\|(\varphi,\psi)\|^2\\
	\geq &\frac18\min\set{\varrho, 1}\|(\varphi,\psi)\|^2.
 \end{align*}
This completes the proof.
\end{proof}

\section{Proof of the main results}
\subsection{Completion of the proof of Theorem \ref{thm1} and Theorem \ref{thm1'}}
Denoting 
\begin{align*}
I_\mu(u)=\frac{1}{2}\int_{\mathbb{R}^N}\big(|\nabla u|^2+K_1(|x|)u^2\big)    
	 -\frac{\mu}{ p}\int_{\mathbb{R}^N} |u|^{p}  \\
	 I_\nu(v)=\frac{1}{2}\int_{\mathbb{R}^N}\big(|\nabla v|^2+K_2(|x|)v^2\big)    
	 -\frac{\nu}{ p}\int_{\mathbb{R}^N} |v|^{p},
\end{align*}
we have 
 \begin{align*}
&I_n(U_r+\varphi_0(r,\rho), V_\rho+\psi_0(r,\rho)) \\
=&I_\mu(U_r+\varphi_0(r,\rho))+I_\nu(V_\rho+\psi_0(r,\rho))+O(e^{-\ell})	\\
=&I_\mu(U_r )+I_\nu(V_\rho ) +O(\frac{1}{r^{m+\tau}}+\frac1{\rho^{m+\tau}})\\
=&\ell(A +\frac{B_1}{r^m}+\frac{B_2}{\rho^m}-C_1(\frac{\ell}{r})^\frac{N-1}{2}e^{-\frac{ 2\pi r}{\ell}}-C_2(\frac{\ell}{\rho})^\frac{N-1}{2}e^{-\frac{ \pi \rho}{\ell}}+O(\frac{1}{r^{m+\tau}}+\frac1{\rho^{m+\tau}})).
 \end{align*}
 We show that this function 
 of two variables attains a local maximum at some point  $(r, \rho)\in \mathbb D_1\times \mathbb D_2$.
Set 
\[
s=\frac{r}{\ell\ln\ell}-\frac{m}{2\pi},\quad t =\frac{\rho}{\ell\ln\ell}-\frac{m}{\pi}.
\]
Then 
\[ I_n(U_r+\varphi_0(r,\rho), V_\rho+\psi_0(r,\rho))=
\ell A +\ell^{1-m} (\ln \ell)^{-m} F(s,t)
\]
where 
\[ F(s, t)=
 f(s) +h(t)
+ O\left( \ell^{-\tau} (\ln \ell)^{-\tau} \right) 
\]
with
\[
f(s)=B_1 \left( s + \frac{m}{2\pi} \right)^{-m} - C_1 (\ln \ell)^{m - \frac{N-1}{2}} e^{-2\pi s\ln\ell} \left( s + \frac{m}{2\pi} \right)^{-\frac{N-1}{2}},
\]
\[
h(t)=B_2 \left( t + \frac{m}{\pi} \right)^{-m} -C_2 (\ln \ell)^{m - \frac{N-1}{2}} e^{-\pi t\ln\ell} \left( t + \frac{m}{\pi} \right)^{-\frac{N-1}{2}}.
\]
One can check that for $|s|\leq \frac{m}{4\pi}$,
\[
f'(s)=-mB_{1}\left(s + \dfrac{m}{2\pi}\right)^{-m-1} 
+ C_{1}(\ln \ell)^{m - \frac{N+1}{2}}e^{-2\pi s\ln\ell}
 [ 2\pi  \left(s + \dfrac{m}{2\pi}\right)^{-\frac{N-1}{2}} 
  + O(\frac{1}{\ln\ell})].
\]
\[
f''(s)=m(m+1)B_{1}\left(s + \dfrac{m}{2\pi}\right)^{-m-2}-C_{1}(\ln \ell)^{m - \frac{N+3}{2}}e^{-2\pi s\ln\ell}
 [
   4\pi^{2}\left(s + \dfrac{m}{2\pi}\right)^{-\frac{N-1}{2}} +O(\frac{1}{\ln \ell })].
\]
Therefore, $f$ has a critical point $s_\ell$
with 
\[
s_\ell =\frac{(m-\frac{N+1}{2})\ln\ln\ell +O(1)}{2\pi \ln \ell}.
\]
and   for some $\epsilon>0$,
\[
f''(s)\leq -2\epsilon\ln\ell  \mbox{ for } |s-s_\ell|\leq \frac{1}{\ln\ell}.
\]
Therefore,
\[
f(s)\leq f(s_\ell)- \epsilon(\ln\ell)(s-s_\ell)^2,\quad  |s-s_\ell|\leq \frac{1}{\ln\ell}.
\]
Similarly, decreasing $\epsilon$ if necessary,
$h$ has a critical point $t_\ell$ and 
\[
h(t)\leq h(t_\ell)- \epsilon(\ln\ell)(t-t_\ell)^2,\quad |t-t_\ell|\leq \frac{1}{\ln\ell}.
\]
Then if $|s-s_\ell|= \ell^{-\frac{\tau}{2}}$, or $|t-t_\ell|=\ell^{-\frac{\tau}{2}}$,
\[
F(s,t)\leq f(s_\ell)+h(t_\ell)- \epsilon\ell^{-\tau}\ln\ell +O\left( \ell^{-\tau} (\ln \ell)^{-\tau} \right)<F(s_\ell, t_\ell). 
\]
There $F(s,t)$ has a local maximum in $(-\ell^{-\frac\tau2}, \ell^{-\frac\tau2})^2$.
 Consequently, for each sufficiently large $\ell$, the perturbed problem \eqref{eq1.1n} admits a solution if $n\geq n_\ell$. Taking the limit as $n \to +\infty$,
 it converges to a solution $(u_\ell, v_\ell)$ to \eqref{eq0}.
 To see $u_\ell, v_\ell$ are nonnegative, we employ $(u_\ell^-, v_\ell^-)$ as test functions in equation \eqref{eq0},
 yielding
 \[
\|(u_\ell^-, v_\ell^-)\|\leq \mu\|u_\ell^-\|_{L^p}^p+\nu\|v_\ell^-\|_{L^p}^p.
 \]
 Given that
 $(u_\ell, v_\ell)\in \Lambda_\ell$,  this inequality leads us to   $(u_\ell^-, v_\ell^-)=0$,
 concluding the proof.  
 \hfill \qedsymbol
\subsection{The phenomenon of dead cores}
We will demonstrate that the segregated solutions obtained are dead core solutions. To obtain this conclusion, we mainly rely on the following lemma, which can be deduced from \cite[Theorem~8.4.2–8.4.7]{Pucciserrin}; for self-containedness, we also provide a more direct proof in Appendix~\ref{appendix}.

 \begin{lem}\label{lem5.1}
Let $\tau > 0$ be fixed as in \eqref{tau} and let $\sigma' \in (0,1)$ be given by assumption (G4). 	There is $c_\tau>0$ such that 
	\begin{equation*}
		\begin{cases}
			-\Delta w+ w^{\sigma'}=0  &\mbox{ in } B_1(0)\\
			w=c_\tau \quad &\mbox{ on } \partial B_1(0),
		\end{cases}
	\end{equation*}
	has a unique distribution nonnegative radial solution $w(x)=w(|x|)$,
	satisfying
	 $w \in C^1(B_1(0))$, $w=0$ in $B_{(m+\tau)/(m+\frac32\tau)}(0)$ and  $w'(r)>0$ if $w>0$.
 \end{lem}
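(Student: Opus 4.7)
The plan is to construct $w$ variationally (using strict convexity for existence and uniqueness) and then extract the dead-core structure, $C^1$ regularity, and strict monotonicity via comparison with an explicit radial barrier built from the one-dimensional profile $\phi(s)=A_*\,s_+^{2/(1-\sigma')}$, which encodes the exact sublinear vanishing rate at the free boundary.

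\textbf{Step 1 (Existence, uniqueness, symmetries).} For each $c>0$, the strictly convex, coercive, weakly lower semicontinuous functional
\[
J_c(w)=\int_{B_1(0)}\Bigl(\tfrac12|\nabla w|^2+\tfrac1{\sigma'+1}w_+^{\sigma'+1}\Bigr)
\]
on $\{w\in H^1(B_1(0)):w-c\in H_0^1(B_1(0))\}$ has a unique minimizer $w_c$, which satisfies $-\Delta w_c+w_c^{\sigma'}=0$ distributionally. The inequality $J_c(|w|)\le J_c(w)$ together with rotational invariance and uniqueness force $w_c\ge 0$ and radial. Uniqueness among \emph{all} nonnegative distributional solutions with boundary value $c$ follows from monotonicity of $s\mapsto s^{\sigma'}$: testing the difference $w_1-w_2$ against itself yields $\int|\nabla(w_1-w_2)|^2+\int(w_1^{\sigma'}-w_2^{\sigma'})(w_1-w_2)=0$, hence $w_1=w_2$.

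\textbf{Step 2 (Supersolution and dead core).} The 1D identity $\phi''=\phi^{\sigma'}$ is solved exactly by $\phi(s)=A_* s_+^{2/(1-\sigma')}$ with $A_*=\bigl(\tfrac{(1-\sigma')^2}{2(\sigma'+1)}\bigr)^{1/(1-\sigma')}$ and $\phi(0)=\phi'(0)=0$. Fix $r_0=(m+\tau)/(m+\tfrac32\tau)\in(0,1)$ and, for $A\in(0,A_*)$ to be chosen, set $U(r)=A(r-r_0)_+^{2/(1-\sigma')}$. A short computation using $\phi''=\phi^{\sigma'}$ gives, on $(r_0,1]$,
\[
-U''(r)-\tfrac{N-1}{r}U'(r)+U(r)^{\sigma'}=\bigl(1-(A/A_*)^{1-\sigma'}\bigr)U(r)^{\sigma'}-\tfrac{N-1}{r}U'(r),
\]
and since $U'$ vanishes at $r_0$ \emph{one} power of $(r-r_0)$ faster than $U^{\sigma'}$, the curvature correction $(N-1)U'/r$ is controlled by the principal term $(1-(A/A_*)^{1-\sigma'})U^{\sigma'}$ uniformly on $[r_0,1]$ as soon as $A$ is taken sufficiently small in terms of $N,\sigma',r_0$. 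Hence $U$ is a radial supersolution of $-\Delta U+U^{\sigma'}=0$ on $B_1(0)$ that vanishes on $B_{r_0}(0)$. Setting $c_\tau:=U(1)=A(1-r_0)^{2/(1-\sigma')}$, the standard comparison principle for this monotone absorbing equation (proved by testing with $(w_{c_\tau}-U)_+\in H_0^1(B_1(0))$) gives $0\le w_{c_\tau}(x)\le U(|x|)$, so $w_{c_\tau}\equiv 0$ on $B_{r_0}(0)$.

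\textbf{Step 3 ($C^1$ regularity and strict monotonicity).} Writing $u(r)=w_{c_\tau}(|x|)$, the radial form $(r^{N-1}u')'=r^{N-1}u^{\sigma'}\ge 0$ shows $r^{N-1}u'(r)$ is nondecreasing. Standard elliptic regularity gives $u\in C^{2,\alpha}_{\mathrm{loc}}(\{u>0\})$, while $u\equiv 0$ on its zero set. Let $r_*:=\sup\{r\in[0,1]:u(r)=0\}\ge r_0$; since $u\ge 0$ attains its minimum at $r_*$ we have $u'(r_*)=0$, so $u\in C^1([0,1])$ and $w\in C^1(B_1(0))$. Integrating $(s^{N-1}u'(s))'=s^{N-1}u(s)^{\sigma'}$ from $r_*$ to $r\in(r_*,1]$ yields
\[
r^{N-1}u'(r)=\int_{r_*}^{r}s^{N-1}u(s)^{\sigma'}\,ds,
\]
which is strictly positive as soon as $u$ is not identically zero on $(r_*,r)$; since $u(1)=c_\tau>0$ and the identity above prevents $u$ from returning to zero after it becomes positive, $\{u>0\}=(r_*,1]$ and $u'(r)>0$ there.

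\textbf{Anticipated main obstacle.} The principal difficulty is the non-Lipschitz sublinearity of $s\mapsto s^{\sigma'}$ at $s=0$: it simultaneously creates the dead core and rules out classical shooting arguments (the radial Cauchy problem at a zero of $u$ is not uniquely solvable). The variational formulation bypasses this by selecting the minimizing solution, and the explicit $s_+^{2/(1-\sigma')}$ profile dictated by $\phi''=\phi^{\sigma'}$ realizes the natural vanishing rate at the free boundary. The only genuinely quantitative ingredient is the supersolution property of $U$ on $[r_0,1]$, which amounts to a single smallness choice of $A<A_*$ depending only on the fixed constants $N,\sigma',m,\tau$.
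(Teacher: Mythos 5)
Your proof is correct and follows essentially the same two-step route as the paper: a strictly convex variational argument for existence, uniqueness, nonnegativity, and radial symmetry, followed by comparison with an explicit radial supersolution of power type vanishing on an inner ball to force the dead core. The only material difference is in the parameterization of the barrier: the paper takes $\overline w(r)=(r-a)_+^{q}$ with $q$ chosen large (so the sublinear term $\overline w^{\sigma'}$ dominates by blowing up the exponent mismatch $(1-a)^{q\sigma'+2-q}$), whereas you fix the critical exponent $q=2/(1-\sigma')$ so that $U''$ and $U^{\sigma'}$ scale identically, and instead take the amplitude $A<A_*$ small to beat the lower-order $\tfrac{N-1}{r}U'$ term. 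Both choices produce an admissible $c_\tau$; yours is arguably sharper and more transparent about the natural vanishing rate at the free boundary, while the paper's is slightly cruder but needs no normalization constant. Your Step 3 (establishing $C^1$ regularity, that the null set is exactly $[0,r_*]$, and strict monotonicity via integrating $(r^{N-1}u')'=r^{N-1}u^{\sigma'}$ from $r_*$) spells out details the paper treats more briefly, and it is sound; the paper folds the monotonicity into Step 1 and leaves $C^1$ regularity implicit from standard elliptic estimates.
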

 
\begin{proof}[Proof of Theorem \ref{deadcore}]
For $j=1,\dots, 2\ell$, on $\partial  B_{\frac{2m+3\tau}4\ln \ell}(y^j)$, we have 
\[
u_\ell =O(e^{-\ell}), \quad v_\ell \geq C(\ln\ell)^{-\frac{N-1}{2}} \ell^{-\frac{2m+3\tau}4}-\ell^{-\frac{m}2-\tau}.
\]
By (G4), we assume there is $A>0$ such that 
in $B_{\frac{2m+3\tau}4\ln \ell}(y^j)$,there holds
\[
\partial_1 G(u_\ell, v_\ell)\geq A u_\ell^{\sigma'} v_\ell^{\sigma'+1}.
\]
From equation \eqref{eq0'}, it follows that
\[
-\Delta u_\ell 	+ 16\ell^{-\frac{m}2} u_\ell^{ \sigma'}\leq -\Delta u_\ell 	+ (K_1 u_\ell^{1-\sigma'}-\mu u_\ell^{p-\sigma'}-\beta v_\ell^{\sigma'+1})u_\ell^{\sigma'}\leq 0.
\]
Consider 
\[
w_\ell(x)= \ell^{-\frac{m}{2(1-\sigma')}}	\left(  {(2m+3\tau)\ln \ell} \right)^{\frac{2}{1-\sigma'}} w\left(\frac{4x}{(2m+3\tau)\ln\ell}\right).
\]
It can be verified that
\begin{equation*}
	\begin{cases}
		-\Delta w_\ell+16 \ell^{-\frac{m}{2}}w_\ell^{\sigma'}=0  &\mbox{ in } B_{{\frac{2m+3\tau}4}\ln \ell}(0)\\
		w_\ell >c_\tau \ell^{-\frac{m}{2(1-\sigma')}}	\left(  {(2m+3\tau)\ln \ell} \right)^{\frac{2}{1-\sigma'}}  \quad &\mbox{ on } \partial B_{{\frac{2m+3\tau}4}\ln \ell}(0)\\
		w_\ell=0 &\mbox{ in } B_{{\frac{m+\tau}2}\ln \ell}(0).
	\end{cases}
\end{equation*}
Therefore, 
$w_\ell(\cdot - y^j)$ is a supersolution of $-\Delta u +\ell^{-\frac{m}{2}}u^{\sigma'}=0$ and 
satisfies $w_\ell(\cdot - y^j)>u_\ell $ on $\partial B_{{\frac{2m+3\tau}4}\ln \ell}(y^j)$.
Since $u_\ell$ is nonnegative as shown in Section 5.1, we must have
\[
	u_\ell=0\mbox{ in } \bigcup_{j=1}^{2\ell} B_{\frac{m+\tau}2\ln \ell}(y^j).	
\]
Note that $\frac{|y^1-y^2|}2=\rho_\ell\sin\frac{\pi}{2\ell}=(\frac{m}{2}+o(1))\ln \ell<\frac{m+\tau}2\ln \ell$.
We know that $y^j$,$j=1,\dots,2\ell$ contains in a connected component of $\set{x | u_\ell(x) =0}$.
If $N=2$, then $\tilde u_\ell=u_{\ell}|_{\R^N\setminus B_{\rho_\ell-\sqrt{m\tau/2}\ln\ell}}\in H_0^1(\R^N\setminus B_{\rho_\ell-\sqrt{m\tau/2}\ln\ell}(0))$.
Testing $-\Delta u+K_1 u= u^{p-1} +\beta \partial_1G(u,v)$ by $\tilde u_\ell$,
we have 
\[
\|\tilde u_\ell\|^2\leq \int_{\R^N} |\tilde u_\ell|^p\leq C\|\tilde u_\ell\|^p.
\]
Since $\|\tilde u_\ell\|\to 0$ as $\ell\to +\infty$, we can conclude 
that $\tilde u_\ell=0$.
Similar arguments for $v_\ell$ completes the proof.
\end{proof}

 \medskip

 \appendix
\section{Proof of Lemma \ref{lem5.1}}\label{appendix}

	\begin{proof}
\textbf{Step 1. } We show that 
for each $c>0$ the equation 
\begin{equation*}
		\begin{cases}
			-\Delta w+ w^{\sigma'}=0  &\mbox{ in } B_1(0)\\
			w=c \quad &\mbox{ on } \partial B_1(0),
		\end{cases}
	\end{equation*}
	has a unique nonnegative solution $w$.. Moreover, this solution is radially symmetric and satisfies $w'(r)>0$ if $w(r)>0$.
To prove this, consider the energy functional
\[
\mathscr F(u)=\frac12\int_{B_1(0)}|\nabla u|^2 + |u|^{\sigma'+1}
\]
defined on the affine space
\[
M=\set{u\in H^1(B_1(0)) | u=c \text{ on } \partial B_1(0)}.
\]
We know $\mathscr F$ is strictly convex and coercive  on $M$.
Hence, it has a unique minimizer $w$ which is the unique solution.
Since $\mathscr F(|w|)\leq \mathscr F(w)$ and  $|w|\in M$,  by uniqueness, $w=|w|\geq 0$. Moreover, it is easy to check that $\mathscr F$ has 
a   minimizer on $M\cap H_{r}^1(B_0(1))$, which is a nonnegative radial
solution. By uniqueness again, we know $w$ is radially symmetric.
In radial coordinates, the equation   reduces to
	\[
	(r^{N-1}w')'(r)=r^{N-1}w^{\sigma'}(r).
	\]
 Therefore, $r^{N-1}w'$ is increasing and  
 $w'(r)>0$ wherever $w(r)>0$.

	\textbf{Step 2.}
	For each $a\in(0,1)$,
	we find $c_a$ such that $w=0$ in $B_a(0)$.  Let $q> \tfrac{2}{1-\sigma'} $ to be fixed later. Define
	\[
	\overline w(x):=\big[(|x|-a)_+\big]^q,\qquad c:=(1-a)^q>0.
	\]
	Then $\overline w$ is radial, $\overline w\equiv0$ in $B_{a}$, $\overline w=c$ on $\partial B_1$,
	and for $r=|x|>a$,
	\[
	\overline w'(r)=q(r-a)^{q-1}>0,\qquad \overline w''(r)=q(q-1)(r-a)^{q-2}.
	\]
	Since $q> \tfrac{2}{1-\sigma'}>2$, we have $\overline w'(a)=\overline w''(a)=0$, hence $\overline w\in C^2(B_1(0))$.
	A direct computation gives, for $r>a$,
	\[
	\Delta\overline w=\frac{q(N-1)}{r}(r-a)^{q-1}+q(q-1)(r-a)^{q-2},
	\]
	whence
	\[
	-\Delta\overline w+\overline w^{\sigma'}=
	-\Big(\tfrac{q(N-1)}{r}(r-a)^{q-1}+q(q-1)(r-a)^{q-2}\Big)
	+(r-a)^{q\sigma'}.
	\]
	Factoring $(r-a)^{q-2}$ and using $\frac{r-a}{r}\le1-a$,
	\[
	-\Delta\overline w+\overline w^{\sigma'}
	\ge \Big[(1-a)^{\,q\sigma'+2-q}-q(N-1)(1-a)-q(q-1)\Big]\,(r-a)^{q-2}.
	\]
	Because $q\sigma'+2-q\to -\infty$, as  $q\to +\infty$, so by fixing $q$ sufficiently large, the bracket
	is nonnegative, and thus
	\[
	-\Delta\overline w+\overline w^{\sigma'}\ge0\quad\text{in }B_1(0).
	\]
	Therefore $\overline w$ is a global supersolution with $\overline w=0$ in $B_a(0)$.
Now since $w$ is a solution, we have 
\[
-\Delta(w-\overline w)+w^{\sigma'}-\overline w^{\sigma'}\leq 0\text{ in } B_1(0).
\]
Testing the equation by $ (w-\overline w)^+\in H_0^1(B_1(0))$, we obtain $\|\nabla(w-\overline w)^+\|_2^2\leq 0$.
Hene $w\leq \overline w$.
	\end{proof}

\noindent\textbf{Conflict of Interest}:
The authors declare that they have no conflict of interest.

 \medskip
\noindent\textbf{Data Availability}:
 	No   data were generated or analysed in this study.
 	
\medskip
\noindent\textbf{Acknowledgments}: The authors are deeply grateful to Professor Angela Pistoia for her valuable discussions and insightful suggestions throughout the preparation of this paper. Her expertise and guidance have been instrumental in enhancing the quality of this work.

This research was supported by the National Natural Science Foundation of China (Grant Nos. 12271539, 12371107) and the China Scholarship Council (CSC) during the first author's visit to Sapienza University of Rome.


\begin{thebibliography}{99}
 
	\bibitem{AA} N. Akhmediev, A. Ankiewicz,   {Partially coherent solitons on a finite background,} Phys. Rev. Lett, 82 (1999) 2661-2664.
	\bibitem{3}A. Ambrosetti, E. Colorado,
	 {Standing waves of some coupled nonlinear Schr\"odinger equations}, J. Lond. Math. Soc.,
   75 (2007) 67-82.
   \bibitem{ambrosetti-colorado-ruiz}
A. Ambrosetti, E. Colorado, D. Ruiz,
  {Multi-bump solitons to linearly coupled systems
of Nonlinear Schr\"odinger equations}, Calc. Var. Partial Differential Equations,
30(1) (2007) 85-112.

\bibitem{BC13}
W. Bao, Y. Cai,
  {Mathmatical theory and numerical methods for Bose-Einstein condensation.}  Kinetic and Related Models, {6} (2013) 1-135.



\bibitem{BSSS} H. Buljan, T. Schwartz, M. Segev, M. Soljacic, D. Christoudoulides, Polychromatic partially spatially incoherent solitons in a noninstantaneous Kerr nonlinear medium, J. Opt. Soc. Amer. B 21 (2004) 397--404.
\bibitem{BBT}J.L. Bona, D.K. Bose, R.E.L. Turner, Finite-amplitude steady waves in stratified fluids, J. Math. Pures Appl. 62
(1983) 389--439.

\bibitem{BL} H. Brezis, E.H. Lieb, Minimum action solutions of some vector field equations, Comm. Math. Phys. 96 (1984)
97--113.
  
\bibitem{BT14}
J. Byeon, K. Tanaka,    Semiclassical standing waves with clustering peaks for nonlinear Schrödinger equations. Mem. Amer. Math. Soc. 229, viii+89 (2014).

\bibitem{CZ} R. Cipolatti, W. Zumpichiatti, On the existence regularity of ground state for a nonlinear system of coupled
Schr\"odinger equations in RN, Comput. Appl. Math. 18 (1999) 15--29.

\bibitem{Clapp25}
M. Clapp, V. Hernández-Santamaría, A. Saldaña,  
A strong unique continuation property for weakly coupled elliptic systems.
J. Math. Anal. Appl. 544 (2025), no. 2, Paper No. 129069, 13 pp.

\bibitem{Coti92}
V. Coti Zelati, P. H. Rabinowitz,   Homoclinic type solutions for a semilinear elliptic PDE on $\mathbb R^n$. Comm. Pure Appl. Math. 45, 1217--1269 (1992).

\bibitem{13} N. Dancer, J. Wei, T. Weth,  {A priori bounds versus multiple existence of positive solutions for a nonlinear Schr\"odinger system}, Ann. Inst. H. Poincar\'e Anal. Non Lin\'eaire, 27 (2010) 953-969.
 
 \bibitem{15} D. G. de Figueiredo, O. Lopes,  {Solitary waves for some nonlinear Schr\"odinger systems}, Ann. Inst. H. Poincar\'e Anal. Non Lin\'eaire, 25 (2008) 149-161.
 

\bibitem{guo1} Q. Guo, J.Yang, Excited states for two-component Bose-Einstein condensates in dimension two, Journal of Differential Equations,343 (2023) 659-686.

\bibitem{guo2} Q. Guo, L. Zhao, Non-degeneracy of synchronized vector solutions for weakly coupled nonlinear Schr\"odinger systems, Proceedings of the Edinburgh Mathematical Society, 65 (2022) 441--45.




\bibitem{GLWZ19-1}
Y. Guo, S. Li, J. Wei, X. Zeng,
 {Ground states of two-component attractive
Bose-Einstein condensates I: Existence and uniqueness.} J. Funct. Anal., 276 (2019) 183-230.

 \bibitem{GLWZ19-2}
Y. Guo, S. Li, J. Wei, X. Zeng,
 {Ground states of two-component attractive Bose-Einstein condensates II: Semi-trivial limit behavior.} Trans. Amer. Math. Soc., 371 (2019) 6903-6948.
\bibitem{GLW}
Y. Guo, C. Lin, J. Wei,
 {Local uniqueness and refined spike profiles of ground states for two-dimensional attractive Bose-Einstein condensates.} SIAM J. Math. Anal., 49 (2017) 3671-3715.


\bibitem{GS}
Y. Guo, R. Seiringer,
 {On the mass concentration for Bose-Einstein condensates with attractive interactions.} Lett. Math. Phys., 104 (2014) 141-156.


\bibitem{Guo1}
Y. Guo, X. Zeng, H. Zhou,  {Energy estimates and symmetry breaking
in attractive Bose-Einstein condensates with ring-shaped potentials.}  Ann. Inst. H.
Poincar\'e Anal. Non Lin\'eaire, 33 (2016) 809-828.

\bibitem{glwjde}Y. Guo, B. Li, J. Wei, Entire nonradial solutions for non-cooperative coupled
elliptic system with critical exponents in $\R^3$,J. Differential Equations 256 (2014) 3463--3495.




\bibitem{HMEWC}D. S. Hall, M. R. Matthews,  J. R. Ensher, C. E. Wieman, E. A. Cornell, Dynamics of component separation in a binary mixture of Bose-Einstein conden-
 sates. Phys. Rev. Lett. 81, 8 (1998)  1539--1542.

 \bibitem{KTU} K. Kasamatsu, M. Tsubota,  M. Ueda,  Vortices in multicomponent Bose-
Einstein condensates. Int. J. Mod. Phys. B 19  (2005) 1835.


 \bibitem{21}Z. Lin, Z. Wang,  {Multiple bound states of nonlinear Schr\"odinger systems}, Comm. Math. Phys., 282 (2008) 721-731.
 \bibitem{LW05}
 T. Lin, J. Wei,  {Ground state of $N$ coupled nonlinear Schr\"odinger equations in $\R^N$, $N\leq 3$.} Comm. Math. Phys., 255 (2005) 629-653.
 \bibitem{lw} T. Lin, J. Wei,  {Spikes in two-component systems of nonlinear Schr\"odinger equations with trapping potentials}, J. Differential Equations, 229 (2006) 538-569.






\bibitem{MMP} L. Maia, E. Montefusco, B. Pellacci, Positive solutions for a weakly coupled nonlinear Schr\"odinger systems, 
J. Differential Equations, 229 (2006) 743-767.



\bibitem{pistoia-coron}A. Pistoia, N. Soave, On Coron's problem for weakly coupled elliptic systems, Proc. London Math. Soc. (3) 116 (2018) 33--67.

\bibitem{pistoiavaira}
A. Pistoia, G. Vaira, Segregated solutions for nonlinear Schr\"odinger systems with weak interspecies forces, Comm. Partial Differential Equations   47  (2022)  2146--2179.

 \bibitem{PW13}
 S. Peng, Z.-Q. Wang,  
 Segregated and synchronized vector solutions
for nonlinear Schr\"odinger systems. 
Arch. Rational Mech. Anal., 208 (2013) 305-339.

\bibitem{Pucciserrin}
P. Pucci, J. Serrin,  
The maximum principle.
Progr. Nonlinear Differential Equations Appl., 73
Birkhäuser Verlag, Basel, 2007. x+235 pp.
 \bibitem{RL14}
 J. Royo-Letelier,  {Segregation and
 symmetry breaking of strongly coupled two component
 Bose-Einstein condensates in a harmonic trap}.
 Calc. Var. Partial Differential Equations, 49 (2014) 103-124.

\bibitem{Stuart}C.A. Stuart, Bifurcation in $L^p(\R^N)$ for a semilinear elliptic equation, Proc. London Math. Soc. 57 (1988) 511--541.

\bibitem{31}B. Sirakov,  {Least energy solitary waves for a system of nonlinear Schr\"odinger equations in $\R^n$}, Comm. Math. Phys., 271 (2007) 199-221.

\bibitem{32} S. Terracini, G. Verzini,  {Multipulse phase in k-mixtures of Bose-Einstein condensates}, Arch. Ration. Mech. Anal., 194 (2009) 717-741.


  

\bibitem{WZZ} 
Z.-Q. Wang, C. Zhang, Z. Zhang,  {Multiple bump solutions to logarithmic scalar field equations,} Advances in Differential Equations, 28 (2023) 981-1036.
  
\bibitem{WY} J. Wei, S. Yan,
Infinitely many positive solutions for the nonlinear Schr\"odinger equations in $\R^N$.
Calc. Var. Partial Differ. Equ. 37(3--4) (2010) 423-439.








\bibitem{awy}W. Ao, L. Wang,  W.Yao, Infinitely many solutions for nonlinear Schr\"odinger
system with non-symmetric potentials, Commun. Pure Appl. Anal. 15 no. 3 (2016)  965-989.

\bibitem{aw} W. Ao, J.  Wei, Infinitely many positive solutions for nonlinear equations with non-symmetric potentials, Calc. Var. Partial Differential Equations 51  no. 3-4 (2014) 761-798.




\bibitem{wz}L. Wang, C. Zhao, Infinitely many solutions for nonlinear Schr\"odinger equations
with slow decaying of potential, Discrete Contin. Dyn. Syst. 37  no. 3 (2017) 1707-1731.

\bibitem{zheng} L. Zheng, Segregated vector solutions for the nonlinear Schr\"odinger systems in $\R^3$,
Mediterr. J. Math. 14  no. 3 (2017) 107.

\end{thebibliography}
\end{document}